\newtheorem{thm}{Theorem}
\newtheorem{prop}[thm]{Proposition}
\newtheorem{cor}[thm]{Corollary}
\newtheorem{lem}[thm]{Lemma}
\newtheorem{df}[thm]{Definition}
\newtheorem{rem}[thm]{Remark}
\newtheorem{eg}[thm]{Example}
\newtheorem{ques}[thm]{Question}
\newtheorem{alg}[thm]{Algorithm}
\title{Inverse problems in geometric graphs using internal measurements}
\author{Michael Robinson}
\begin{document}

\thanks{This work was supported under DARPA/STO HR0011-09-1-0050 and AFOSR FA9550-09-1-0643}

\begin{abstract}
This article examines the inverse problem for a lossy quantum graph
that is internally excited and sensed.  In particular, we supply an
algorithmic methodology for deducing the topology and geometric
structure of the underlying metric graph.  Our algorithms rely on
narrowband and visibility measurements, and are therefore of
considerable value to urban remote sensing applications.  In contrast
to the traditional methods in quantum graphs, we employ ideas related
to algebraic and differential topology directly to our problem.  This
neatly exposes and separates the impact of the graph topology and
geometry.
\end{abstract}

\maketitle

\section{Introduction}
This article opens a new line of inquiry into the structure of waves
on metric graphs.  In particular, we are interested in algorithms for
recovering the metric graph structure from measurements of propagating waves taken at
unspecified locations {\it within} the graph.  We recover this
structure in two stages: by first recovering the topology, and then the
geometry.  The algorithms for each stage draw upon a related circle of
ideas in algebraic topology; in particular, we use homological tools
to extract topology and cohomological tools to extract geometry.  To
validate the effectiveness of our approach, the algorithms have been
implemented in software and tested on simulation data. \cite{GHR_2010} 

\subsection{Historical development of the theory}
The study of linear
second-order differential operators on metric graphs has a long and
venerable history, which is discussed nicely in the survey articles
\cite{Kuchment_2008} \cite{Bandelt_2008} \cite{Gnutzmann_2006}, which
have extensive references.  The study of metric graphs
with self-adjoint linear operators began with a short paper by Pauling
\cite{Pauling_1936} examining the spectral properties of aromatic
compounds.  This paper was followed by a number of others that refined
Pauling's approach (for instance \cite{Ruedenberg_1953}, and of which
\cite{Richardson_1972} is a brief summary).  Because of the physical
chemistry focus, a metric graph paired with a self-adjoint
second-order linear operator became known as a {\it quantum graph}.
Because the spectum of a self-adjoint operator is what is measured in
quantum mechanics, the study of quantum graphs has been heavily
focused on their spectral properties.  The first mathematically
rigorous treatment of the spectrum of a quantum graph was by Roth in
\cite{Roth_1983}, which introduced the use of {\it trace formulae}.
Trace formulae permit the spectrum of a quantum graph to be treated as
a single algebraic object.

After Roth's initial work, an increasingly sophisticated literature on
spectral properties grew up with trace formulae as the primary tool.
As the spectral properties themselves are not of direct interest here,
we refer the reader to the excellent survey \cite{Gnutzmann_2006} and
will call out a few interesting articles to highlight the historical
development of the field.  Spacing between eigenvalues in regular and
equilateral graphs was treated by \cite{Jakobson_2003} and
\cite{Pankrashkin_2006}.  That the eigenvalues of a quantum graph are
generically simple was shown by \cite{Friedlander_2005}, making use of
a generalization of the theory of partial differential operators on
manifolds.  This simplifies some of the treatment of inverse spectral
problems.  Post \cite{Post_2008} examines some general properties
of the spectrum of quantum graphs, and relates them to combinatorial
Laplacians.  Finally, Parzanchevski \cite{Parzanchevski_2009}
showed that two different quantum graphs can have the same spectrum.

In addition to being interesting in its own right, Parzanchevski's
article \cite{Parzanchevski_2009} also demonstrates the kind of
limitations present in recovering the underlying metric graph from its
quantum graph spectrum.  In other words, the {\it inverse spectral
  problem} \cite{Carlson_1999} cannot be solved completely.  In line
with spectral methods, which concern globally-valid solutions to
linear equations on the quantum graphs, both inverse spectral problems
and inverse {\it scattering} problems concern the extraction of metric
graph structure from global measuments.  The global nature of theses
inverse problems is physically helpful, since quantum graphs have been
used to study very small objects: it is difficult to excite a single
atom in a molecule, usually one must settle for imposing global, {\it
  external} excitation.  

External excitations lead to scattering problems \cite{Mizuno_2008},
which in the case of quantum phenomena can lead to rather complicated
dynamics for particles confined to a graph \cite{Kottos_1997},
\cite{Kottos_2003}.  In particular, eigenfunctions of the Laplacian on
a quantum graph can have widely separated peaks that are irregularly
dispersed in the graph.  This permits particles to tunnel in an
apparently random fashion from one portion of the graph to another,
and has therefore been a good model of quantum chaos.  The examination
of general scattering problems leads to interesting questions strictly
outside of the quantum mechanical context.  For instance, Flesia {\it
  et al.}  \cite{Flesia_1987} examined how the excitation of a regular
graph with random speeds of propagation along it supports (or does not
support) localized excitations.  Indeed, they found that in classical
wave propagation, the peaks of eigenfunctions tend to be more widely
distributed than in the quantum case.

Inverse spectral problems on quantum graphs became popular following
the paper of Gutkin and Smilansky \cite{Gutkin_2001}, which was made
somewhat more precise by Kurasov and his collaborators
\cite{Kurasov_2001}, \cite{Kurasov_2005}.  Briefly, these articles
showed how to solve a generic class of inverse spectral problems when
the linear operator is the Laplacian.  The nongeneric problems clearly
must have symmetries, as \cite{Boman_2005} explains in detail, and
indeed Parzanchevski's examples of isospectral graphs are highly
symmetric.  For other operators, there are partial results, of which
\cite{Avdonin_2008} and \cite{Yurko_2005} are typical.  For compact
graphs in which the self-adjoint operator is the Laplacian, the
spectrum determines the total length of the graph (sum of all edge
lengths), the number of connected components, and the Euler
characteristic (and hence the number of loops) \cite{Kurasov_2008}.

Clearly, the topology of the underlying metric graph plays an
important role in both the spectral structure and the solution to
inverse problems.  That connection is of particular interest in this
article, as we show explicitly how algebraic topological invariants of
the graph impact the spectrum.  For instance, in \cite{Fulling_2007},
an index is computed that detects traveling wave solutions whose
orbits are closed but not periodic when the self-adjoint operator on
the graph is the Laplacian.  (By ``closed but not periodic,'' we mean
traveling wave solutions that return to their initial starting point,
but are going in the opposite direction at that starting point.)  This
index is topological, and depends only on the Euler characteristic of
the graph.  

\subsection{Main contributions of this article}
In this article, we look at quantum graphs from a substantially
different viewpoint, motivated by applications in remote sensing.  In
particular, while we are interested in inverse problems (finding the
metric graph structure from measurements), we are interested in
gathering {\it local} information only.  To fix ideas, consider a
dense urban environment, in which wave propagation can only occur in
narrow channels (along the roads), and is obstructed by many
buildings.  Within this environment are placed an unknown number of
transmitters and receivers at various locations.  Given that the
transmitters can be distinguished from one another by the receivers,
how much of the geometric and topological structure of the network of
propagation channels can be recovered?  Since the propagation channels
are narrow, it is sensible to assume that wave propagation happens
only along the edges of a graph.  The vertices of this graph represent
junctions between propagation channels in the usual way.  Of course,
the resulting second-order operator is the wave operator, which leads
to a quantum graph formulation of this problem.  (This ``thin-wire''
approximation has been examined asymptotically in
\cite{Kostrykin_1999}, \cite{Kuchment_2002}, \cite{Smilansky_2006},
and \cite{Molchanov_2006}.)  Unlike the inverse spectral and
scattering problems, the measurements are from {\it internal}
locations, rather than at a distance and concern local features of the
data.  Indeed, the continuity of the eigenfunctions is what will
permit us to infer global features from these local measurements.

There are three main reasons why the traditional quantum graph
analytic techniques would be impractical in a remote sensing
application:
\begin{enumerate}
\item In order to collect enough spectral data to exploit any of the
  inverse problem solutions found thus far, extremely wide-band
  receivers and transmitters would be needed.  In the case of urban
  remote sensing, the resulting frequencies are usually not available
  for imaging purposes.
\item Propagation losses can be substantial in remote sensing, which
  usually results in a strong localization of the signal to the
  vicinity of each transmitter.
\item The computational complexity of the algorithms resulting from
  the inverse spectral problems is unknown, and may hamper practical
  application.   
\end{enumerate}
These difficulties suggest that a better approach is to exploit
transmitter visibility information (ie. tabulate which transmitters
are visible by which receivers).  As will be shown, this permits the
topology of the underlying graph to be computed algorithmically and
efficiently.  We exhibit a novel algorithm for preprocessing
visibility information so that it is suitable for computing the
underlying topology.  Coherent, local measurements of the waves
permit the geometry to be recovered, under similar genericity
conditions to \cite{Gutkin_2001}.  Additionally, we obtain a new
characterization theorem that relates the space of eigenfunctions to
the geometric and topological structure of the graph.

The tools we employ are those in the emerging field of applied algebraic
topology.  In particular, we will employ the Nerve Lemma
\cite{Bott_1995} to determine the topology from visibility
information, the Whitney embedding theorem for stratified spaces
\cite{Natsume_1980}, and some ideas from the cohomology of
constructible sheaves.

It should be noted that the most similar approach that we have found
in the literature is that of the recent work by Caudrelier and Ragoucy
\cite{Caudrelier_2010}, which uses reflection-transmission algebras to
perform computations on internally-sampled signals.  We recover their
results using a sheaf-theoretic framework, which cements the
connection of their method to topological invariants.

\subsection{Outline of the article}
We begin in Section \ref{statment_sec} by giving a precise statement
of the problem we aim to solve, as well as stating the key results
that we prove in later sections.  In particular, we observe that there
exists (Theorem \ref{contractible_cover_thm}) a finite cover by
regions coming from thresholded signal levels.  This is an enabling
result for an algorithm (Algorithm \ref{refinement_alg}) for
extracting the graph topology from measurements.  We explain this
algorithm in Section \ref{topology_sec}.  In Section
\ref{geometry_sec}, we demonstrate that once the topology has been
found, narrowband signals suffice to compute the geometry (Theorem
\ref{geometry_extract_thm}, which has an inductive, algorithmic
proof).  In support of this result, we prove a characterization
(Theorem \ref{general_tcoh_thm}) that shows how the space of solutions
to quantum graphs is completely, and explicitly determined by the
geometry and topology of the graph.  Finally in Section
\ref{discussion_sec} we discuss the results.

\section{Problem statement and definitions}
\label{statment_sec}

\subsection{Review of metric graph definitions}

The underlying metric graph structure that will be used in this
article is essentially the one initially defined in \cite{Zhang_1993}
(and later examined in \cite{Baker_2006}, \cite{Baker_2007}), but to
support the examination of scattering problems we relax the
compactness hypothesis.

\begin{df}
A {\it metric graph} is a metric space $X$ such that each $x\in X$ has
a neighborhood $U_x$ isometric to a degree $n_x$ star-shaped set:
\begin{equation*}
\{z\in \mathbb{C} | z=te^{2\pi i k n_x}\text{ for some } 0\le t < r_x \text{ and
some }k \in \mathbb{Z} \}.
\end{equation*}
\end{df}

We can associate a combinatorial structure of edges and vertices
(nonuniquely) to a metric graph.  Precisely, if $V \subseteq X$ is a
discrete subset such that $V$ contains all points of $X$ whose degree
is not 2, we call $V(X)$ a set of {\it vertices}.  The set of
path-connected components of the complement of $V$ (each of which is
isometric to an open interval) is called the set of {\it edges}
$E(X)$.  Those elements of $V(X)$ that lie in the topological closure
of an edge $e$ are called the {\it endpoints} of $e$.  (There may be
only one endpoint in any given edge.)  (We remark that the choice of vertex and edge sets
will in general effect the quantum graph structures we obtain, but we
choose vertex conditions for which this nonuniqueness is
immaterial.)

We assume that it is always possible to find a {\it finite} set
of vertices and edges.  We call $X$ in this situation a {\it finite
  metric graph}.  With this structure, an edge that contains a degree
1 vertex will be called a {\it closed edge}.  An edge whose closure in
$X$ is not a circle, has exactly one endpoint, and has infinite length
will be called an {\it open edge}.

Of course, this assumption replaces the compactness assumption in
\cite{Zhang_1993} and others.

The primary contribution of \cite{Baker_2006} was the formulation of a
Laplacian operator suitable for finite metric graphs.  For a piecewise smooth
function $f:X\to \mathbb{C}$, this operator is the bounded, signed
{\it measure} given by 
\begin{equation}
\label{laplacian_eqn}
\Delta f(x) = \frac{d^2 f}{dx^2}(x)+\sum_{x\in X} \sigma_x(f) \delta_x,
\end{equation}
where $\sigma_x(f)$ is the sum of (outgoing) directional derivatives
of $f$ at $x$.\footnote{Caution: our sign convention differs from
  \cite{Baker_2006}.}  Notice in particular, that the second
derivative operator appearing in the first term of
\eqref{laplacian_eqn} is insensitive to the parametrization at $x$ on
the interior of an edge.  Secondly, the sum of directional derivatives
vanishes everywhere except at finitely many points since $f$ is
piecewise smooth.  Since the Laplacian is a measure, the fact that the second
derivatives are not defined at the vertices presents no difficulty.

\subsection{Lossy quantum graphs and their fundamental solutions}

Since we are interested in narrowband, lossy wave propagation, we
will study the fundamental solution to the Helmholtz equation
want to examine solutions to
\begin{equation}
\label{helmholtz_eqn}
\Delta u+k^2 u = \delta_y,
\end{equation}
where the wavenumber $k$ may be complex (to indicate lossy propagation), and $y\in X$
is the {\it transmitter location}.  We will be
interested in solutions that satisfy the ``Kirchoff''
conditions \cite{Kuchment_2008}:
\begin{itemize}
\item $u$ is continuous on $X$ and 
\item at each point $x \in X$, the sum of the derivatives
  of $u$, $\sigma_x(u)=0$.
\end{itemize}

\begin{df}
The pairing of \eqref{helmholtz_eqn} and the Kirchoff conditions will
be called a {\it lossy quantum graph}.
\end{df}

Clearly, along the interior of an edge, one has the general solution
to \eqref{helmholtz_eqn} which is the superposition of two traveling
waves:
\begin{equation}
\label{travel_eqn}
u(x)=c_1 e^{ikx}+c_2e^{-ikx}.
\end{equation}

The non-unique
choice of vertex and edge sets does not play a significant role in the
solutions of a lossy quantum graph when Kirchoff conditions are used.  Suppose $u$ is the solution to a
lossy quantum graph.  If we consider another quantum graph structure
in which there is an additional vertex within an edge, $u$ will
automatically be a solution of this new structure.  Likewise, any
solution of this new structure will be a solution of the original
structure.  This latter fact is most easily seen as a consequence of
edge collapse (Lemma \ref{tlcollapse}).

A fundamental solution \eqref{helmholtz_eqn} of a quantum graph
$X$ can be obtained as a superposition of solutions of the lossy quantum
graph on $X - \{y\}$.  It can be interpreted as an external source
scattering problem for the graph with $y$ removed. 

\subsection{Thresholding and covers of contractible regions}

In a lossy quantum graph, the energy from a signal source tends to be
concentrated around its location.  From an urban signal propagation
perspective, this means that only receivers nearby a given transmitter
will be able to decode its transmissions correctly.  This suggests
that {\it visibility} within a lossy quantum graph could
be useful for determining which receiver locations are near which
transmitters.  System designers typically use signal level
thresholds to determine whether decoding can proceed successfully, so
we employ a thresholding approach to construct visibility
regions for a given signal source's location.  Precisely, the region
where the signal level is above a given threshold will be a
contractible subset of $X$ when the threshold and signal loss are
large enough.

The central result is that there exists a collection of locations of
signal sources that can be thresholded appropriately to give a
collection of contractible visibility regions that cover
the graph.

\begin{thm}
\label{contractible_cover_thm}
Suppose $X$ is a lossy quantum graph with wavenumber
$k=k'+i\alpha$ for $k',\alpha>0$.  ($\alpha$ is the
{\it loss coefficient}, with larger values corresponding to higher
loss.)  
\begin{enumerate}
\item For each $y\in X$, there is a choice of $\alpha$ (say
  $\alpha_y$), and a threshold $T_y > 0$ such that for the fundamental
  solution $u_y$ to \eqref{helmholtz_eqn}, the {\it visibility
    region} $U_y(\alpha_y)=\{x \in X | |u_y(x)|^2 > T_y^2\}$ is
  contractible.
\item The collection $\{U_y(\alpha_y)\}$ forms a cover of $X$.
\item Suppose that all vertices of $X$ are contained in the compact
  set $K$.  There is therefore a finite subcollection
  $\{y_1,...,y_n\}$ such that
  $\{U_{y_1}(\alpha_{y_1}),...,U_{y_n}(\alpha_{y_n})\}$ covers $X \cap
  K$.  Let $\alpha = \max\{\alpha_{y_1},...,\alpha_{y_n}\}$.  Then the
  resulting collection $\{U_{y_1}(\alpha),...,U_{y_n}(\alpha)\}$
  covers $X \cap K$ and consists of contractible sets.
\end{enumerate}
\end{thm}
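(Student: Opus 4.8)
The plan is to analyze the fundamental solution $u_y$ directly and exploit the fact that in the high-loss regime it is essentially a single outgoing wave emanating from $y$: its magnitude decays like $e^{-\alpha\,d(x,y)}$ and is strictly monotone along every ray from $y$, so that a super-level set of $|u_y|^2$ is a metric ball about $y$ — a star-shaped, hence contractible, region. Concretely, after adjoining $y$ to the vertex set (this changes nothing about the solution, by the edge-collapse remark preceding Lemma \ref{tlcollapse}), the whole statement reduces to two facts: (a) a \emph{uniform} exponential bound $|u_y(x)|\le C_y\,e^{-\alpha\,d(x,y)}/(2|k|)$ valid for all $\alpha$ above some $\alpha_y^{(0)}$, with $C_y$ independent of $\alpha$; and (b) on the punctured ball $B(y,r)$ with $r<\delta_y$ (where $\delta_y>0$ is the distance from $y$ to the nearest other vertex, so $B(y,r)$ contains no vertex but $y$), the solution is a $C^1$-small perturbation of the ``direct'' wave $\tfrac1{in_yk}e^{ik\,d(x,y)}$, whose magnitude $\tfrac1{n_y|k|}e^{-\alpha\,d(x,y)}$ is strictly decreasing in $d(x,y)$ once $\alpha>k'$.

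Granting (a) and (b), Part (1) goes as follows. Fix $r_y\in(0,\delta_y)$. On $B(y,r_y)$ the reflected and multiply-scattered contributions have magnitude $O\!\big(e^{-\alpha(2\delta_y-r_y)}/|k|\big)$ and derivative $O\!\big(\alpha e^{-\alpha(2\delta_y-r_y)}/|k|\big)$, which are negligible in $C^1$ compared with the direct term and its derivative (both of size $\asymp e^{-\alpha r_y}/|k|$ out to radius $r_y$); hence $|u_y|$ is strictly decreasing in $d(x,y)$ there for $\alpha$ large. By (a), $\sup_{d(x,y)\ge r_y}|u_y(x)|\le C_y e^{-\alpha r_y}/(2|k|)$, which for $\alpha$ large is strictly smaller than $|u_y(y)|\asymp 1/(n_y|k|)$. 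Choose $\alpha_y$ large enough that both conditions hold and pick any $T_y$ with $C_y e^{-\alpha_y r_y}/(2|k|)<T_y<|u_y(y)|$. Then $U_y(\alpha_y)=\{|u_y|^2>T_y^2\}$ is contained in $B(y,r_y)$, equals a metric ball $\{d(\cdot,y)<\rho_y\}$ there by strict monotonicity, and deformation-retracts to $y$ along the rays from $y$ (a well-defined continuous retraction since $B(y,r_y)$ is a star); so it is contractible. \textbf{The main obstacle is establishing the uniform bound (a).} One sets up the linear system that determines the finitely many edge coefficients in \eqref{travel_eqn} from continuity, the Kirchhoff conditions, the source jump at $y$, and the radiation condition on the open edges, with the inter-edge couplings carrying factors $e^{ik\ell}=e^{ik'\ell}e^{-\alpha\ell}$; as $\alpha\to\infty$ this system decouples, so its inverse is bounded uniformly in large $\alpha$, and unwinding the phase/decay factors along geodesics from $y$ yields (a). Checking that this survives multiple scattering (including full reflections at closed-edge endpoints, where the standing wave $\sqrt{\cos^2(k's)+\sinh^2(\alpha s)}$ has no interior zero and a strict minimum at the vertex once $\alpha>k'$) is the delicate point; it is handled by the exponential gap between the direct term and every scattered term, choosing $\alpha_y$ large relative to $1/\delta_y$, the girth, and $k'$.

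Part (2) is immediate: $|u_y(y)|>T_y$ by construction, so $y\in U_y(\alpha_y)$ for every $y\in X$, whence $\{U_y(\alpha_y)\}_{y\in X}$ covers $X$. For Part (3), observe that estimates (a) and (b) and the construction above hold for \emph{every} $\alpha\ge\alpha_y$ provided the threshold is rescaled as $T_y\asymp e^{-\alpha r_y}/|k|$ (equivalently, one renormalizes $u_y$ by its peak value); with this choice $U_y(\alpha)$ contains the fixed ball $B(y,\rho_y)$ for a radius $\rho_y>0$ independent of $\alpha\ge\alpha_y$ and remains contractible. The open balls $\{B(y,\rho_y)\}_{y\in X}$ cover $X$, so by compactness of $X\cap K$ there is a finite subcollection with $\bigcup_{i=1}^n B(y_i,\rho_{y_i})\supseteq X\cap K$. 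Setting $\alpha=\max_i\alpha_{y_i}$, each $U_{y_i}(\alpha)\supseteq B(y_i,\rho_{y_i})$ is still contractible by the preceding remark, so $\{U_{y_1}(\alpha),\dots,U_{y_n}(\alpha)\}$ is a cover of $X\cap K$ by contractible sets, as claimed.
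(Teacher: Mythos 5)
Your argument is correct in outline, but it takes a genuinely different route from the paper's. The paper reduces everything to the one\mbox{-}dimensional interference Lemma \ref{traveling_interference_lem}: on each edge incident to $y$ it superposes the outgoing wave with a reflected wave of amplitude $\Gamma e^{-\alpha L}$ (with $\Gamma$ absorbing the rest of the graph), shows that for large $\alpha$ the first sidelobe of $|u|^2$ drops below $|u(0)|^2$, and thresholds at the sidelobe level, so the visibility region is a star of connected subintervals of the incident edges. You instead shrink the region below the injectivity scale: inside $B(y,r_y)$ with $r_y<\delta_y$ the fundamental solution is a $C^1$-small perturbation of the free outgoing Green's function $\tfrac{1}{in_yk}e^{ik\,d(\cdot,y)}$, and everything outside that ball is killed by the uniform decay bound (a). The trade-offs: the paper's regions are as large as the loss permits, which matters for the downstream nerve construction (fewer transmitters), while yours never reach the nearest vertex; on the other hand your part (3) is sharper --- exhibiting a fixed inner ball $B(y,\rho_y)\subseteq U_y(\alpha)$ for all $\alpha\ge\alpha_y$ makes precise what the paper only asserts, namely that the sets ``can be increased in size, or maintained'' as $\alpha$ grows. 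Two caveats. First, your super-level set need not be literally a metric ball: the scattered corrections, and hence the cutoff radius, can differ from ray to ray, so the set is only a star about $y$ --- which still retracts to $y$, so contractibility is unaffected. Second, the entire weight of your proof rests on the uniform bound (a), which you correctly flag as the main obstacle but only sketch; the sketch is sound (vertex scattering coefficients are bounded by $1$ and every inter-edge coupling carries a factor $e^{-\alpha\ell_{\min}}$, so the coefficient system is a uniformly invertible perturbation of the identity for large $\alpha$), and it is no less complete than the paper's own step of letting $\Gamma$ ``capture effects from the rest of the graph,'' but it would need to be written out --- including the total reflection at degree-one vertices that you treat via the standing-wave magnitude $\sqrt{\cos^2(k's)+\sinh^2(\alpha s)}$ --- for the argument to be fully self-contained.
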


In order to prove Theorem \ref{contractible_cover_thm}, we need to
study the interference of two lossy traveling waves on a line
segment.  The requisite information is captured in the following Lemma.

\begin{figure}
\begin{center}
\includegraphics[width=3in]{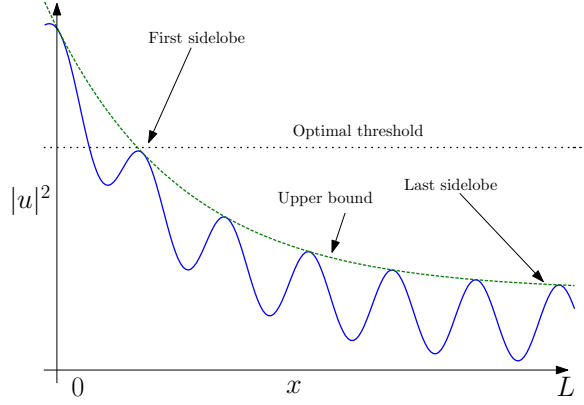}
\caption{Amplitude of interfering traveling waves}
\label{traveling_int_fig}
\end{center}
\end{figure}

\begin{lem}
\label{traveling_interference_lem}
Consider $0 \le x \le L$ and two traveling waves
$u(x)=e^{ikx}+ce^{-ik(x-L)}$ with $k=k'+i\alpha$ as in the statement of
Theorem \ref{contractible_cover_thm}.  Suppose that $c=\Gamma
e^{-\alpha L}$, where $\Gamma$ is the reflection coefficient at $x=L$.
Note that the amplitude of the right-going wave at $x=L$ is
$e^{-\alpha L}$.  For any fixed $\Gamma$, $k'$, and $L$, there exists
an $\alpha>0$ and a $T>0$ such that $\{x \in [0,L] | |u(x)|>T \}$ is
connected and contains zero.
\end{lem}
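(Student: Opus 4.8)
The plan is to compute $|u(x)|^{2}$ in closed form and observe that, once the loss coefficient $\alpha$ is taken large, it behaves like a strictly decreasing exponential plus two perturbations of size $O(e^{-2\alpha L})$; the threshold can then be chosen so that the superlevel set is pinned to be an interval with left endpoint $0$.

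First I would substitute $k=k'+i\alpha$ and $c=\Gamma e^{-\alpha L}$ and pull the real exponential factor out of each traveling wave, writing $u(x)=e^{-\alpha x}e^{ik'x}+\Gamma e^{\alpha x-2\alpha L}e^{-ik'(x-L)}$. Expanding the modulus squared, with $\Gamma=|\Gamma|e^{i\phi}$, gives
\begin{equation*}
|u(x)|^{2}=e^{-2\alpha x}+|\Gamma|^{2}e^{2\alpha x-4\alpha L}+2|\Gamma|e^{-2\alpha L}\cos\!\left(2k'x-k'L-\phi\right).
\end{equation*}
On $[0,L]$ the first term decreases monotonically from $1$ to $e^{-2\alpha L}$, while the second term is at most $|\Gamma|^{2}e^{-2\alpha L}$ and the oscillatory third term is at most $2|\Gamma|e^{-2\alpha L}$ in absolute value, so both are $O(e^{-2\alpha L})$ uniformly in $x$.

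Next I fix $T^{2}=e^{-\alpha L}$ — a value wedged strictly between the perturbation scale $e^{-2\alpha L}$ and the size of the leading term near $x=0$ — and partition $[0,L]$ into $[0,L/3]$, $[L/3,2L/3]$, and $[2L/3,L]$. On $[0,L/3]$ the leading term is at least $e^{-2\alpha L/3}$, which for $\alpha$ large exceeds $e^{-\alpha L}+2|\Gamma|e^{-2\alpha L}$, so $|u|^{2}>T^{2}$ there and in particular $0$ lies in the superlevel set. On $[2L/3,L]$ the leading term is at most $e^{-4\alpha L/3}$, hence $|u|^{2}\le e^{-4\alpha L/3}+(|\Gamma|^{2}+2|\Gamma|)e^{-2\alpha L}<e^{-\alpha L}=T^{2}$ for $\alpha$ large, so the superlevel set avoids $[2L/3,L]$. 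The decisive step is the middle interval: differentiating,
\begin{equation*}
\frac{d}{dx}|u(x)|^{2}=-2\alpha e^{-2\alpha x}+2\alpha|\Gamma|^{2}e^{2\alpha x-4\alpha L}-4k'|\Gamma|e^{-2\alpha L}\sin\!\left(2k'x-k'L-\phi\right),
\end{equation*}
and on $[L/3,2L/3]$ we have $2\alpha e^{-2\alpha x}\ge 2\alpha e^{-4\alpha L/3}$, whereas the other two terms are bounded by $2\alpha|\Gamma|^{2}e^{-8\alpha L/3}$ and $4k'|\Gamma|e^{-2\alpha L}$, each of which is $o\!\left(\alpha e^{-4\alpha L/3}\right)$; hence for $\alpha$ large enough $|u(x)|^{2}$ is strictly decreasing on $[L/3,2L/3]$.

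Combining the three regimes, the set $\{x\in[0,L]:|u(x)|^{2}>T^{2}\}$ contains all of $[0,L/3]$, meets $[L/3,2L/3]$ in an initial segment (since $|u|^{2}$ is continuous and strictly decreasing there, and already below $T^{2}$ at $2L/3$), and is disjoint from $[2L/3,L]$; therefore it is an interval of the form $[0,x_{0})$ or $[0,x_{0}]$ with $L/3<x_{0}<2L/3$, which is connected and contains $0$. The lemma then holds with $T=e^{-\alpha L/2}$ and $\alpha$ chosen large enough for the three estimates above. The only real work is the bookkeeping of the three competing exponential rates $e^{-2\alpha x}$, $e^{2\alpha x-4\alpha L}$, and $e^{-2\alpha L}$ so that the two perturbations are genuinely negligible in each subinterval at once; one should not expect $|u|^{2}$ to be monotone on all of $[0,L]$ when $|\Gamma|$ is close to $1$ (near-total reflection at $x=L$), which is precisely why the estimate is localized to the three pieces rather than run globally.
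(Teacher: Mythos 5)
Your proof is correct, and it takes a genuinely different route from the paper's. Both arguments begin from the same expansion $|u(x)|^{2}=e^{-2\alpha x}+|\Gamma|^{2}e^{2\alpha x-4\alpha L}+2|\Gamma|e^{-2\alpha L}\cos(2k'x-k'L-\phi)$ (the paper implicitly takes $\Gamma$ real, so your version with the phase $\phi$ is slightly more general), but they diverge immediately afterward. The paper chooses the threshold to be the value of $|u|$ at the \emph{first sidelobe} $x_{FSL}=\frac{n\pi}{2k'}+\frac{L}{2}$, shows that this value drops below $|u(0)|$ for large $\alpha$, and then controls connectedness through the symmetry of the envelope $e^{-2\alpha x}+c^{2}e^{2\alpha(x-L)}+2ce^{-\alpha L}$ about its minimum, estimating where the last sidelobe can sit relative to $x=L$. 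You instead fix the threshold at the intermediate scale $T^{2}=e^{-\alpha L}$, wedged between the $O(1)$ leading term near $x=0$ and the $O(e^{-2\alpha L})$ perturbations, and split $[0,L]$ into thirds: superlevel on $[0,L/3]$, sublevel on $[2L/3,L]$, and strictly decreasing on the middle third by an explicit derivative estimate, which forces the superlevel set to be an interval $[0,x_{0})$ with $x_{0}\in(L/3,2L/3)$. What your approach buys is rigor and transparency: the threshold is explicit, every step is an elementary comparison of exponential rates, and you never need to locate sidelobes or analyze the envelope's symmetry point $\frac{L-\log c}{2}$ --- the part of the paper's argument (the final inequality chain mixing $L$, $\alpha$, and $\log\Gamma$) that is hardest to make precise. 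What the paper's approach buys is a threshold tied to the physically meaningful first-sidelobe level, which is what a system designer would actually threshold against; your $T=e^{-\alpha L/2}$ is somewhat more conservative, though both superlevel sets extend roughly to the midpoint of the edge, which is all that parts (1) and (2) of Theorem \ref{contractible_cover_thm} require. Your closing remark that one should not expect global monotonicity of $|u|^{2}$ when $|\Gamma|$ is near $1$ is exactly the obstruction the paper's sidelobe discussion is aimed at, and your localization to three subintervals handles it cleanly.
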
 
Note that $T$ is strictly positive.
\begin{proof}
We proceed by computation...
\begin{eqnarray*}
u&=&e^{-\alpha x}e^{ik'x} + c e^{\alpha (x-L)} e^{-ik(x-L)}\\
&=&\left(e^{-\alpha x} \cos k'x + c e^{\alpha(x-L)}\cos k'(x-L)\right)
+ i\left(e^{-\alpha x} \sin k'x - c e^{\alpha(x-L)}\sin k'(x-L)\right).\\
\end{eqnarray*}
Thus, after a small amount of manipulation,
\begin{equation}
\label{traveling_interference_eqn1}
|u|^2=e^{-2\alpha x} + c^2e^{2\alpha (x-L)}+ 2ce^{-\alpha
  L}\cos\left(2k'x -k'L\right).
\end{equation}
Notice in particular that the fast spatial variation is captured by
the cosine term.  The location of the ``first sidelobe'' (see Figure \ref{traveling_int_fig}) of this
expression is the left most location of the maxima of the cosine,
namely
\begin{equation*}
x_{FSL}=\frac{n\pi}{2k'}+\frac{L}{2},
\end{equation*}
where $n$ is the smallest integer such that $x_{FSL}$ is positive.

An upper bound for $|u|^2$ is its envelope, namely
\begin{equation}
\label{traveling_interference_eqn2}
|u|^2\le e^{-2\alpha x} + c^2e^{2\alpha (x-L)}+ 2ce^{-\alpha
  L}.
\end{equation}

At the first sidelobe location, the value of $|u|^2$ is
\begin{eqnarray*}
|u(x_{FSL})|^2 &=& e^{-\alpha L}\left(e^{-\frac{\alpha n \pi}{k'}} +
  c^2e^{\frac{\alpha n \pi}{k'}}+2c\right)\\
&=&e^{-\alpha L}\left(e^{-\frac{\alpha n \pi}{k'}} +
  \Gamma^2e^{\frac{\alpha n \pi}{k'}-\alpha L }+2\Gamma e^{-\alpha L}\right),\\
\end{eqnarray*}
which can be made less than the value at 
\begin{equation*}
|u(0)|^2=1+\Gamma^2e^{-3\alpha L}+2\Gamma e^{-2\alpha L} \cos(-k'L)
\approx 1.
\end{equation*}
by taking $\alpha$ large enough.  Therefore, we'd like to take
$T=|u(x_{FSL})|$.  

Now the upper bound \eqref{traveling_interference_eqn2} is symmetric
about $x=\frac{L - \log c}{2}$, so the only thing that could spoil the
connectedness of the super-level set of $T$ is large values of $|u|$
near $L$.  Hence, we want 

\begin{eqnarray*}
L &<& \text{ the last sidelobe location before the minimal envelope}\\
&<&L-\log c - \frac{n\pi}{2k'} - \frac{L}{2} \text{ (an
  underestimate)}\\
\frac{L}{2}&<& - \log c - \frac{n\pi}{2k'}.\\ 
L\left(\frac{1}{2} -\alpha\right) & < & - \log \Gamma - \frac{n\pi}{2k'},
\end{eqnarray*}
which is clearly satisfied for large enough $\alpha$.
\end{proof}

\begin{rem}
The wavenumber $k'$ is fairly high for practical imaging systems
(roughly tens of waves per meter), so one typically spatially averages the
measurements to avoid aliasing.  This means that instead of
thresholding the amplitude \eqref{traveling_interference_eqn1}, we
threshold the envelope \eqref{traveling_interference_eqn2}.  This
results in a tolerance of lower loss, and also results in larger
visibility regions.
\end{rem}

Now, we can present the proof of Theorem \ref{contractible_cover_thm}:
\begin{proof}
\begin{enumerate}
\item This is immediate from repeated application of Lemma
  \ref{traveling_interference_lem}, and the conversion of each edge
  incident to a source $y$ into an externally excitated edge.  The
  source is at $y$, and the reflection coefficient $\Gamma$ captures
  effects from the rest of the graph as well.  Notice that more loss
  will make $\Gamma$ smaller, and thus a threshold that satisfies
  Lemma \ref{traveling_interference_lem} will continue to work.
\item That the visibility regions form a cover is immediate from the
  fact that each region obtained in Lemma \ref{traveling_interference_lem} contains zero.
\item The important point is that for larger $\alpha$, the sets
  $U_y(\alpha)$ can be increased in size, or even maintained at the
  same size (on a single edge) by the selection of a threshold.  In
  any event, it is apparent that the since the degree of any vertex is
  finite, a single threshold per source is easily found that results
  in cover by contractible sets even when the loss is increased. 
\end{enumerate}
\end{proof}

\begin{rem}
Theorem \ref{contractible_cover_thm} is an existence result only: many
configurations of transmitters will fail to provide a cover upon
thresholding.  However, by adding finitely many more transmitters to
areas of low transmitter density, a cover by contractible sets can be
obtained.
\end{rem}

\section{Topology computation}
\label{topology_sec}
In this section, we make use of the visibility regions of the previous
section (Theorem \ref{contractible_cover_thm}) to deduce the topology
of a quantum graph.  Our primary tool is the Nerve Lemma, generally
attributed to Leray \cite{Bott_1995}.  If one has a cover of the graph
by contractible sets (as in Theorem
\ref{contractible_cover_thm}), for which all finite intersections
between elements of the cover are also contractible, then one can
construct a simplicial space that is homotopy equivalent to the
original graph.  This {\it nerve construction} is easy to implement,
and for which there are computational topology tools available, for
instance \cite{Kaczynski_2004}.

Unlike the work of previous authors, the algorithm presented here
(in Section \ref{refinement_sec}) only relies on coarse
discriminations, rather than on a detailed examination of the spectrum.
In particular, the only assessment that is needed is whether the
intersection between a number of visibility regions is connected or
not.  It is not immediately clear that this determination is possible.
However, according to a generalization of Whitney's celebrated
embedding theorem \cite{Lee_2003}, there is a topology-preserving embedding of the
graph into an appropriately high-dimensional signal space, which we
construct.  The result is that if the intersection of several
visibility regions is disconnected, then its image is also
disconnected in the signal space.

The main idea of the proof of the algorithm's performance centers
around the Mayer-Vietoris sequence for homology, which is a
well-understood tool in algebraic topology.  We refer the reader to
the excellent treatment in \cite{Hatcher_2001} for details.

\subsection{Uniqueness of signal response and the detection of connected components}
\label{cluster_sec}

If the visibility regions found in Theorem \ref{contractible_cover_thm} formed a good cover, we would be able to use the Nerve Lemma to recover the homotopy type of the underlying metric graph.  However, the visibility regions will usually not form a good cover, often in the way shown in Figure \ref{obstacle_fig}.  However, in a graph, the intersection of two contractible sets can fail to be contractible only by being disconnected.  In this section, we show how to detect disconnectedness of sets by using three or more independent solutions to the quantum graph.

If there are $m$ signal sources, the complete set of measurements we
obtain at a receiver located at $x$ is the value of the piecewise smooth function
\begin{equation*}
P(x)=(u_1(x), u_2(x), ..., u_m(x)).
\end{equation*}
For brevity, we say that $P$ is the {\it signal profile}, taking locations in $X$ to {\it signal space} $S$.  

If the signal profile is topology-preserving (an embedding), then $U\subseteq X$ is disconnected if and only if its image under $P$ is disconnected.  Better, we could just look at the image (which are measurements taken from every point) and deduce the homeomorphism type of the graph.  (Indeed, homeomorphism type is stronger than homotopy type.)  However, this is unrealistic: normally we have to sample the receiver locations.  This is the reason for using a coarser construction based on {\it refining} the visibility regions into a good cover.

We can obtain the following result (appearing for {\it manifolds} in \cite{Golubitsky_1973}) using a stratified transversality result in \cite{Trotman_1979}.

\begin{thm}
\label{signal_embedding_thm}
If the dimension of the signal space $m$ (the number of transmitters) is greater than 2, the signal profile is generically (almost always) an embedding.
\end{thm}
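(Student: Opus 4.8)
The plan is to realize the signal profile $P: X \to S = \mathbb{C}^m$ as a map from a stratified space (the metric graph, stratified by its vertices and the interiors of its edges) and to invoke the stratified transversality/jet-transversality machinery of \cite{Trotman_1979}, in the spirit of the manifold argument in \cite{Golubitsky_1973}. First I would set up the stratification: each open edge interior is a $1$-dimensional stratum on which each $u_j$ is a real-analytic (indeed trigonometric) function of the arclength parameter, given locally by \eqref{travel_eqn}; the vertices are $0$-dimensional strata. The claim "generically an embedding" must be interpreted relative to the finite-dimensional family of quantum-graph solutions one can actually vary — namely the transmitter locations $y_1,\dots,y_m$ and (if desired) the wavenumber — so the genericity statement is about a residual/full-measure subset of that parameter space, and I would state this explicitly before arguing.

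Next I would reduce "embedding" to its two ingredients: (i) $P$ is injective, and (ii) $P$ is an immersion on each stratum (the derivative along each edge is nonvanishing), together with the separation of strata. For (ii), on a single edge the derivative of $u_j$ along the edge is again a nonzero combination of $e^{\pm ikx}$, which vanishes only on a discrete set; transversality to the zero section in the $1$-jet bundle, which here just says the image curve in $S$ is regular, holds generically once $m \ge 1$, so this is the easy part. For (i), injectivity, I would consider the difference map $X \times X \setminus \Delta \to S$, $(x,x') \mapsto P(x) - P(x')$, restricted to each pair of strata, and ask it to avoid $0$. The source $X\times X\setminus\Delta$ is at most $2$-dimensional, the target is $S\cong\mathbb{R}^{2m}$, and $0$ is codimension $2m$ in $S$; so as soon as $2m > 2$, i.e. $m > 1$, a transverse map misses $0$ entirely, giving injectivity. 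The hypothesis $m > 2$ in the statement is then comfortably sufficient (the extra margin absorbs the need to handle self-intersections within a single edge stratum, where the relevant source dimension calculation is the same $2$ versus $2m$).

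The main obstacle — and the reason one cannot simply cite \cite{Golubitsky_1973} verbatim — is establishing that the family of maps $\{P\}$ obtained by varying the transmitter locations is "rich enough" to apply a parametric transversality theorem: one needs that for fixed distinct $x, x'$ the evaluation map $(y_1,\dots,y_m) \mapsto P(x) - P(x')$ is submersive onto $S$, so that Thom's transversality lemma produces a residual good set of parameters. This is where the structure of fundamental solutions enters: moving a single transmitter $y_j$ changes $u_j$ in a way that depends on the graph geometry near $x$ versus near $x'$, and one must check these variations span the relevant directions in $\mathbb{C}$ for each $j$ — which ultimately rests on the non-degeneracy of the Green's function of the lossy Helmholtz operator and on $x, x'$ being genuinely distinct points. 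Once that submersivity is in hand, the stratified transversality theorem of \cite{Trotman_1979} applies stratum-by-stratum to the finitely many pairs of strata, the countable intersection of residual sets is residual, and on that set $P$ is an injective stratified immersion, hence (since $X$ is locally compact and the strata are finite in number) an embedding.
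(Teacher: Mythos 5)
Your overall architecture matches the paper's: stratify the graph, handle the immersion condition by direct computation on edge interiors, and reduce injectivity to a dimension count for a map on the pair space $X\times X-\Delta_X$ avoiding a diagonal, via the stratified transversality theorem of \cite{Trotman_1979}. The decisive difference is \emph{what gets perturbed}. The paper forms the augmented map $F(x,y)=(x,y,P(x),P(y))$ into $C^2(X)\times S\times S$ and perturbs the submanifold $D=C^2\times\Delta_S$ --- that is, it perturbs the criterion for declaring two signal profiles equal, not the physical configuration --- so genericity comes directly from \cite{Trotman_1979} with no need to verify that any family of solutions is ``rich enough.'' You instead propose a parametric transversality argument over the transmitter locations $(y_1,\dots,y_m)$, and you correctly flag the required submersivity as the main obstacle; but that obstacle is not merely unverified, it is fatal as you have set it up. Each $u_j$ depends only on $y_j$, which ranges over a $1$-dimensional space, so the derivative of $(y_1,\dots,y_m)\mapsto P(x)-P(x')$ is block-diagonal with each block a linear map from a $1$-dimensional space into a coordinate of $S$; with your convention $S\cong\mathbb{R}^{2m}$ this map can never be a submersion onto $S$, so Thom's parametric lemma cannot be invoked in the form you describe. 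Some other source of perturbation (the paper's choice of perturbing $D$, or a larger parameter family) is needed.

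A secondary but telling discrepancy is the dimension bookkeeping: the theorem and the paper's count treat the signal space as $m$-dimensional, yielding the intersection-dimension bound $2-m<0$ exactly when $m>2$; your count with $S\cong\mathbb{R}^{2m}$ yields the threshold $m>1$, which proves a strictly stronger statement than the one asserted. That mismatch should have signaled that your model of the target space differs from the one the theorem is about, and it compounds the submersivity problem above (the larger the target, the harder it is for the $m$-dimensional parameter space to achieve transversality by itself).
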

\begin{proof}
First, recall that an embedding consists of an injective, piecewise smooth function whose derivative (where defined) is nonvanishing, by the inverse function theorem.  By direct computation along the edges, the derivative of a nontrivial solution to the Helmholtz equation is nonvanishing.  

It is also therefore the case that a metric graph is an (a)-regular Whitney stratified space.  ((a)-regularity is essentially trivial in this case since the only strata are vertices and edges, and vertices have trivial tangent space.  See \cite{Trotman_1979} for a precise definition.)

Rather than working with $P$ directly, we work with an augmented version
\begin{equation*}
F:C^2(X) \to C^2(X) \times S\times S,
\end{equation*}
given by
\begin{equation*}
F(x,y)=(x,y,P(x),P(y)),
\end{equation*}
where $C^2(X)=X\times X - \Delta_X$ is the space of all {\it nonoverlapping} pairs of points on the graph.  ($\Delta_X$ is the diagonal in $X$ and consists of pairs of points at the same location.)  We note that $C^2$ is an (a)-regular stratified space, being the product of two such spaces (removing the diagonal causes no difficulty).  Since $F$ is evidently an injective immersion, the image of $F$ is also (a)-regular.  (We note that without the factor of $C^2$ in the codomain, $F$ fails to be injective whenever $P$ is not injective; this disrupts the stratification of the image.)

$P$ fails to be injective if and only if the image of $F$ intersects $D=C^2\times \Delta_S$.  We use transversality to determine whether that intersection is empty or not.  The transversality result of \cite{Trotman_1979} states that $F$ is generically transverse to $D$.  (We actually perturb the submanifold $D$, not the image of $F$.  This amounts to perturbing not the recieved signals, but our ability to determine if the signals from two points agree.  We are looking for an empty intersection between the stratified set and the diagonal.  What we get then is stability of this empty intersection.)

When $F$ is transverse to $D$, then the intersection between any stratum of the image of $F$ and $D$ has dimension no greater than
\begin{eqnarray*}
\text{dim }C^2+\text{dim }D - \text{dim }(C^2\times S \times S)&=&2 \text{dim }X+(2\text{dim }X+m)-(2\text{dim }X+2m)\\
&=&2 \text{dim }X-m=2-m.
\end{eqnarray*}
When this bound on dimension is negative, there is no intersection under generic perturbations.  Hence, if the number of independent signals measured is greater than 2, the signal profile is generically injective, hence generically an embedding.
\end{proof}

\subsection{Coverage refinement}
\label{refinement_sec}

\begin{figure}
\begin{center}
\includegraphics[width=3in]{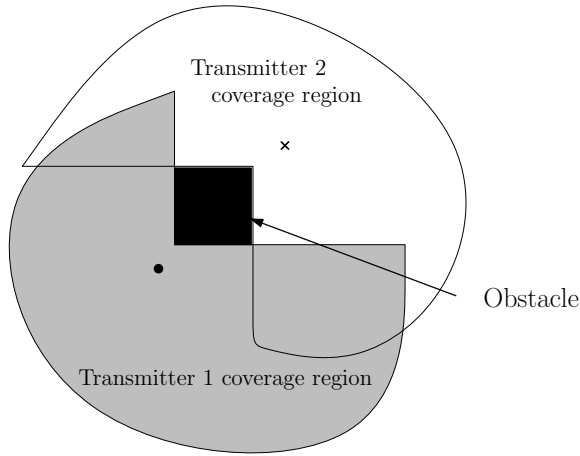}
\caption{Intersection of two contractible coverage regions may be disconnected.  (According to Theorem \ref{signal_embedding_thm}, another transmitter would be needed to disambigute the components of the intersection.)}
\label{obstacle_fig}
\end{center}
\end{figure}

The visibility regions associated to each transmitter can be made
contractible by appropriate thresholding.  However, this usually is
not enough to make the resulting cover into a ``good'' cover, and
thereby recover the topology.  In a graph, the only obstruction is
that intersections between coverage regions may be disconnected.  In
Section \ref{cluster_sec}, we showed that disconnected components
could be detected and identified from the received signals.  In this
section we exhibit an algorithm that exploits connected component
discrimination to {\it refine} the coverage regions into a good cover.

In this section, we use homology with a fixed field $\mathbb{F}$ as
the set of coefficients.  We will therefore suppress the coefficients
from the notation.  This has the advantage that in graphs, we can
identify acyclic sets with contractible ones \cite{Kaczynski_2004}.

\begin{thm}
\label{refinement_thm}
Suppose that $\mathcal{U}=\{U_1,U_2,...U_n\}$ are open sets forming a good cover of a subgraph of $X$ and that $W$ is a contractible subset of $X$ for which
\begin{itemize}
\item $W\not\subseteq \cup\mathcal{U}$, and
\item there are $U_{i_1}, U_{i_2}, ... \in \mathcal{U}$ such that $W\cap U_{i_1} \cap U_{i_2} \cap ...$ is disconnected. 
\end{itemize}
Let $V=W\cap (\cup\mathcal{U})$, then
\begin{enumerate}
\item each path component of $V$ is acyclic, and
\item there exist open neighborhoods $U'_i$ of $U_i - V$ for each $U_i \in \mathcal{U}$ and an open neighborhood $W'$ of $W-V$ that satisfy
\begin{enumerate}
\item $\cup\{U'_i\}$ and $W'$ are disjoint open sets,
\item each $U'_i$ and $W'$ consists of acyclic path components
, and
\item the collection of the path components of $V$, $W'$, and the $U'_i$ forms a good cover. 
\end{enumerate}
\end{enumerate}  
\end{thm}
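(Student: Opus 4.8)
The plan is to get part (1) from the tree structure of $W$, to build the neighbourhoods in part (2) by a separation argument, and to spend the real effort on the good‑cover condition (2c).

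\emph{Part (1).} Because every $U_i$ is open, $V=W\cap(\cup\mathcal U)$ is open in $W$, and since $W$ is locally path connected each of its path components is open in $W$, hence an open connected subgraph of the contractible graph $W$. An open connected subgraph of a tree contains no cycle, so it is acyclic (indeed contractible, since in a graph acyclic and contractible sets coincide). Thus every path component of $V$ is acyclic.

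\emph{Part (2): construction of $W'$ and the $U'_i$.} First I would record two point‑set facts, both immediate from $W$ and $\cup\mathcal U$ being open: $W-V=W\setminus(\cup\mathcal U)$ and $U_i-V=U_i\setminus W$; and the sets $W-V$ and $\cup_i(U_i-V)$ are separated, i.e. each is disjoint from the closure of the other. (The closure of $W\setminus(\cup\mathcal U)$ cannot meet the open set $\cup\mathcal U$, hence meets no $U_i-V$; the closure of $U_i\setminus W$ cannot meet the open set $W$, hence meets no part of $W-V$, and as the index set is finite the closure of the union is the union of the closures.) Since $X$ is a metric space, these separated sets lie in disjoint open sets, say $W''\supseteq W-V$ and $A\supseteq\cup_i(U_i-V)$; put $W'=W''\cap W$ and $U'_i=A\cap U_i$ (to be shrunk further, see below). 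Then $W'$ is an open neighbourhood of $W-V$ with $W'\subseteq W$, each $U'_i$ is an open neighbourhood of $U_i-V$ with $U'_i\subseteq U_i$, and $\cup\{U'_i\}\subseteq A$ is disjoint from $W'\subseteq W''$; this is (2a). Since $W'$ lies in the tree $W$ and each $U'_i$ lies in the tree $U_i$ (every member of a good cover is contractible), their path components are open subgraphs of trees and hence acyclic; this is (2b).

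\emph{Part (2c): the good‑cover condition.} The key observation is that every element of the refined collection lies in $W$ or in some $U_i$, so any nonempty finite intersection of such elements is a subset of a tree and is therefore acyclic the moment it is connected; hence it suffices to arrange that all nonempty finite intersections of path components of $V$, $W'$, and the $U'_i$ are connected. Distinct components of $V$ are disjoint, as are distinct components of $W'$, distinct components of a single $U'_i$, and $W'$ from every $U'_i$; so the only intersections that can be nonempty are (i) a component of $V$ with a component of $W'$, (ii) a component of $V$ with components of $U'_{i_1},\dots,U'_{i_s}$ for distinct $i_1,\dots,i_s$, and (iii) components of $U'_{i_1},\dots,U'_{i_s}$ for distinct $i_1,\dots,i_s$. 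For (i), both factors lie in the tree $W$, and within a tree an intersection of connected subsets is connected. For (iii) with the $V$‑factor absent, the intersection lies in $U_{i_1}\cap\cdots\cap U_{i_s}$, which is contractible because $\mathcal U$ is a good cover, so again one intersects connected subsets inside a tree; case (ii) is handled the same way inside $U_{i_1}$ (or inside $W$). The one thing that must genuinely be checked is that the pieces we threw in around $W-V$ and around the $U_i-V$ meet the components of $V$ in connected sets; this is exactly where the freedom to shrink $W'$ and the $U'_i$ is used: by the convexity (``gate'') property of subtrees of a tree, a sufficiently thin neighbourhood of a connected subgraph of $W$ (respectively of $U_i$) meets any other connected subgraph of the same tree in a connected set, and ``sufficiently thin'' is, exactly as in the proof of Theorem \ref{contractible_cover_thm}, a finite condition imposed near each of the finitely many vertices. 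Assembling these facts gives (2c), and with it the theorem.

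\emph{Where the difficulty lies.} Part (1), the separation argument, and (2a)--(2b) are essentially formal; the work is all in (2c). The subtlety is that the ambient graph $X$ may contain cycles while $W$, the $U_i$, and their mutual intersections do not, so in passing to path components one must keep the introduced neighbourhoods thin enough that no element of the new cover reaches into another along two separated fingers — a phenomenon that can only be controlled by a vertex‑by‑vertex examination of the star‑shaped local model. That bookkeeping at the vertices, rather than any single deep idea, is the hard part of the argument.
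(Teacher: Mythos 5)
Your parts (1), (2a) and (2b) are sound, and in places more direct than the paper's own route: where you observe that a connected subset of a contractible graph contains no embedded circle and is therefore acyclic, the paper reaches the same conclusions by running a Mayer--Vietoris sequence together with Lemma \ref{refinement_lem}, and your separation construction of $W'$ and the $U'_i$ is a cleaner packaging of the paper's explicit $\delta/3$-ball construction at the finitely many points of $\partial V$. Your reduction of (2c) to connectivity of intersections, the gate-property argument for a component of $V$ against a component of $W'$ inside the tree $W$, and the argument for intersections among components of the $U'_{i_j}$ (which genuinely needs the connectedness of $U_{i_1}\cap\cdots\cap U_{i_s}$ supplied by the good-cover hypothesis on $\mathcal{U}$) are all correct.

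The gap is in your case (ii). You invoke the gate property ``inside $U_{i_1}$ (or inside $W$),'' but a path component $P$ of $V$ is a connected subgraph of $W$ and of $\cup\mathcal{U}$, \emph{not} of $U_{i_1}$: precisely because $W\cap U_{i_1}$ is allowed (indeed required, for some index set) to be disconnected, $P$ can exit $U_{i_1}$ through one component of $W\cap U_{i_1}$ and re-enter through another. Symmetrically, a component $A$ of $U'_{i_1}$ is connected in $U_{i_1}$ but need not be connected in $W$. So no single tree contains both sets, the gate property gives nothing, and no amount of thinning helps, because the obstruction is that a single component of $U_{i_1}-V$ can abut the \emph{same} component $P$ of $V$ at two distinct points of $B_{i_1}=U_{i_1}\cap\partial V$. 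Concretely: let $X$ be a circle of circumference $12$ with a pendant edge $f$ of length $2$ attached at the point $6$; let $\mathcal{U}$ be a good cover of the circle (plus a sliver of $f$) by three arcs, one of which is $U_1=(11,4)$ through $0$; and let $W=(1,11.8)\cup\{p\in f : d(p,6)<1.5\}$. Then $W$ is a contractible tripod with $W\not\subseteq\cup\mathcal{U}$ and $W\cap U_1$ disconnected, $V$ has a single component $P$, $U_1-V=[11.8,1]$ is a single component touching $\overline{P}$ at both $11.8$ and $1$, and the corresponding component of $U'_1$ meets $P$ in two disjoint half-open intervals --- the refined collection is not a good cover (its nerve is contractible while the space is not). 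So this step needs either an additional hypothesis or a further splitting of such $U'_i$. You should know that the paper's proof has the identical gap: it asserts that components of $V$ and $U'_i$ intersect in ``precisely half-open intervals'' without ruling out the two-ended contact just described.
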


It is straightforward to make use of Theorem \ref{refinement_thm} to
devise an iterative algorithm for refining a set of contractible
transmitter coverage regions $\mathcal{V}=\{V_1,V_2,...,V_n\}$.  We
have implemented this algorithm in software and tested it against
computer simulations. \cite{GHR_2010}
\begin{alg}
\label{refinement_alg}
We define good covers $\mathcal{V}_1,\mathcal{V}_2,...,\mathcal{V}_n$ inductively:
\begin{itemize}
\item Base step: $\mathcal{V}_1=\{V_1\}$ is a good cover by assumption.
\item Induction step: Assuming $\mathcal{V}_k$ is a good cover, we apply Theorem \ref{refinement_thm} with $W=V_{k+1}$ and $\mathcal{U}=\mathcal{V}_k$.  We then define $\mathcal{V}_{k+1}$ to be the good cover that we obtain as (2)(c) in Theorem \ref{refinement_thm}.
\end{itemize}
\end{alg}

We begin by proving a technical lemma that we primarily use to relate the homology of a graph to a subgraph.  This is an easy application of the Mayer-Vietoris sequence.

\begin{lem}
\label{refinement_lem}
Suppose that $f:X \to Y$ is a continuous injection from one graph into
another and that $Y-f(X)$ is a disjoint union of open, acyclic sets.
Then $f$ induces an injection $H_1(X)\to H_1(Y)$.
\end{lem}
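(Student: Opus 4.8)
The plan is to build the long exact Mayer--Vietoris sequence for a decomposition of $Y$ into $f(X)$ together with the complementary open acyclic pieces, and read off injectivity of $H_1(X)\to H_1(Y)$ from the connecting and inclusion maps. First I would thicken $f(X)$ slightly: since $f$ is a continuous injection of graphs and $Y-f(X)$ is a disjoint union of open sets, I can choose an open neighborhood $A$ of $f(X)$ in $Y$ that deformation retracts onto $f(X)$ (on a graph one simply grows $f(X)$ a little way into each incident edge), and let $B = Y - f(X)$, which is open and, by hypothesis, a disjoint union of acyclic sets. Then $A\cup B = Y$ and $A\cap B$ deformation retracts onto a disjoint union of small punctured-edge pieces, each of which is contractible; in particular $H_1(A\cap B)=0$ and $H_0(A\cap B)$ is free on the set of ``ends'' along which $A$ meets $B$.

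Next I would write out the relevant portion of the reduced Mayer--Vietoris sequence for the pair $(A,B)$:
\begin{equation*}
H_1(A\cap B)\to H_1(A)\oplus H_1(B)\xrightarrow{\ \iota\ } H_1(Y)\to \widetilde{H}_0(A\cap B)\to \widetilde{H}_0(A)\oplus\widetilde{H}_0(B).
\end{equation*}
Since $H_1(A\cap B)=0$ and $H_1(B)=0$ (each component of $B$ is acyclic), the map $\iota$ restricts to an injection $H_1(A)\hookrightarrow H_1(Y)$. Because $A$ deformation retracts onto $f(X)\cong X$, this is exactly the statement that $f_*:H_1(X)\to H_1(Y)$ is injective. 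I would also note that the ``$Y-f(X)$ is a \emph{disjoint} union of open acyclic sets'' hypothesis is doing real work here: it is what guarantees $H_1(B)=0$, and the disjointness guarantees there is no extra $H_1$ created by the combinatorics of how the complementary pieces are glued to each other (they aren't).

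The main obstacle I expect is the soft point-set step rather than the algebra: verifying that $f(X)$ admits an open neighborhood $A$ in $Y$ with $A\simeq f(X)$ and with $A\cap(Y-f(X))$ contractible in each component, so that Mayer--Vietoris applies cleanly with the homotopy types I want. On a finite metric graph this is routine --- one uses the local star-shaped structure at each point of $f(X)$ and the fact that $Y-f(X)$ is open to push $A$ a uniform small distance into each edge of $Y$ leaving $f(X)$ --- but it does use finiteness (or at least local finiteness) of the graph structure to make the retraction globally consistent, and that is the step I would write out with the most care. Everything after that is a two-line diagram chase.
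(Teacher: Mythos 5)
Your proposal is correct and follows essentially the same route as the paper: both thicken $f(X)$ to an open neighborhood $A\simeq f(X)$, pair it with the open set $B=Y-f(X)$, observe that $A\cap B$ is a disjoint union of contractible interval pieces so $H_1(A\cap B)=0$, and read injectivity of $f_*$ off the Mayer--Vietoris sequence using $H_1(B)=0$. The point-set step you flag (constructing $A$ uniformly on a finite graph) is exactly the step the paper also handles by growing $f(X)$ into the ends of the adjacent edges of $Y-f(X)$.
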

\begin{proof} {\bf Simplify this proof!!}
First, observe that there is an open neighborhood $U$ of $f(X)$ that is homotopy equivalent to $f(X)$.  This neighborhood consists of the union of $f(X)$ and some subintervals of $Y-f(X)$ (taken from the ends of the edges in $Y-f(X)$ that are adjacent to $f(X)$).  Then we may consider the Mayer-Vietoris sequence
\begin{equation*}
0\to H_1(\text{some intervals})\to H_1(U)\oplus H_1(Y-f(X))\to H_1(Y)\to ...
\end{equation*}
since $U\cup\text{int }(Y-f(X))=Y$.  By assumption $U$ has the same homology as $f(X)$ and $X$, so this exact sequence can be written
\begin{equation*}
\begin{CD}
0\to H_1(\text{some vertices})\to H_1(X)@>f_*>>H_1(Y)\to ...
\end{CD}
\end{equation*}
A discrete space always has trivial $H_1$, whence $f_*$ must be injective on $H_1$.  
\end{proof}

The proof of Lemma \ref{refinement_thm} relies on the topological dimension of $X$ being not greater than 1.  Indeed, the Lemma is false in dimensions 2 and higher. 

We now proceed with the proof of Theorem \ref{refinement_thm}.

\begin{proof} 
Begin by considering the Mayer-Vietoris sequence 
\begin{equation*}
\begin{CD}
0\to H_1(V)\to H_1(\cup \mathcal{U})\oplus H_1(W)@>*>>H_1(W\cup(\cup \mathcal{U}))\to H_0(V)\to ...
\end{CD}
\end{equation*}
Note that $H_1(W)$ is trivial by assumption, and by Lemma \ref{refinement_lem} the map $*$ above is injective.  Hence each component of $V$ is acyclic, establishing (1).

Define $B_i=U_i \cap (\overline{V}-V)=U_i \cap \partial V$, which consists of the boundary points of $V$ that lie in $U_i$.  Similarly, define $C=W\cap \partial V$.

Observe that $C$ is disjoint from all of the $B_i$:
\begin{eqnarray*}
C \cap (\cup B_i) &=& (W\cap\partial V)\cap\left(\cup(U_i\cap\partial V)\right) \\
&=&W\cap(\cup U_i)\cap\partial V\\
&=&V\cap\partial V =0\\
\end{eqnarray*}
since $V$ is open.

Further, $\partial V$ is finite, since $X$ is a metric graph.

Define 
\begin{equation}
\label{refine_dist1_eqn}
\delta=\min_{x,y\in C\cup(\cup \{B_i\})\atop x\not=y} d(x,y).
\end{equation}
Since $C\cup(\cup \{B_i\})$ is finite and contains more than one point, $\delta>0$.  However, an upper bound for $\delta$ is
\begin{equation}
\label{refine_dist2_eqn}
\delta<\inf \{d(x,y)|x\in W-V,\;y\in(\cup\mathcal{U})-V\}.
\end{equation}

Define neighborhoods
\begin{equation*}
W'=(W-V)\cup\left(\bigcup_{x\in C} B_{\delta/3}(x)\right),
\end{equation*}
and for each $i$,
\begin{equation*}
U'_i=(U_i-V)\cup\left(\bigcup_{x\in B_i} B_{\delta/3}(x)\right),
\end{equation*}
where $B_r(x)$ is the open ball of radius $r$ centered at $x$.  Evidently by \eqref{refine_dist2_eqn}, $W' \cap U'_i=\emptyset$ for all $i$, which establishes (2)(a).  By \eqref{refine_dist1_eqn},
\begin{equation*}
W' \simeq W-V,\text{ and } U'_i \simeq U_i -V.
\end{equation*}

Notice that $H_1(W-V)$ and $H_1(U_i-V)$ are trivial by Lemma
\ref{refinement_lem}: consider the inclusion of $W-V \to W$.  We just proved that $V$ is a disjoint union of acyclic components, and it is evidently open.  This inclusion induces an injection on $H_1$.  However, $W$ is acyclic, so this implies that $H_1(W-V)$ must be trivial.  The same reasoning works for $U_i-V$, which
establishes (2)(b).

To show (2)(c), it remains to establish the following statements, which are immediate from the construction of $W'$ and $U'_i$:
\begin{itemize}
\item Components of $V$ and $W'$ have acyclic intersections since they're precisely half-open intervals,
\item Components of $V$ and $U'_i$ have acyclic intersections by the same logic, and
\item The collection of components of $\{U'_i\}$ form a good cover of $(\cup \mathcal{U})-V$.  Observe that disjoint intervals were attached to each $U_i$ in the construction of $U'_i$ such that if an interval $I_i$ was attached to $U_i$ and $I_j$ was attached to $U_j$, with $I_i \cap I_j \not= \emptyset$, then $I_i=I_j$.
\end{itemize}
\end{proof}

\section{Geometry computation}
\label{geometry_sec}

We now address the problem of obtaining the geometric structure of a
quantum graph by using a few narrowband measurements taken at points
internal to the graph.  The central result of this section shows how a
single solution over the entire graph determines the geometry up to
phase ambiguity.  It should be emphasized that this result is obtained
under the assumption that the topological structure of the underlying
metric graph is known, either as a homeomorphism type (which is
preferable), or more likely as a nerve.  In either case, we
assume that we are in possession of a 1-dimensional simplicial complex
that describes the underlying topological space structure of the
quantum graph.

Since the primary effect of geometry is on the phase of signals, the
measurement of geometric information in a quantum graph requires
coherent, time-sensitive processing.  From a signal processing
perspective, the resulting signal-to-noise requirements are
considerably more demanding than those required to obtain visibility
information.  As a result, the measurements of geometry in a lossy
quantum graph will tend to be useful over a fairly small region.  We
therefore compute geometry from local measurements, which are the
least contaminated by loss.  Additionally, we want to operate passively, without reference to known transmitter locations.  Therefore, without substantial loss of generality, we consider homogeneous solutions rather than
fundamental solutions.  

As an aside, the calculations given in this section are actually still
valid for {\it homogeneous} solutions of {\it lossy} quantum graphs.
These solutions correspond to waves incident from outside the graph.
(See the discussion following Question \ref{coh_ques} for details.)

The space of solutions to a given quantum graph is finite dimensional,
and depends on both topology and the geometry.  In Section
\ref{space_sec}, we compute this dimension from the graph structure.
In more traditional language, we obtain the dimension and structure of
the eigenspaces of the graph Laplacian operator.  (There is a delicate
interplay between loss and the dimension of these eigenspaces: signal
loss eliminates resonance phenomena that are extremely useful in
inverse spectral methods for detecting geometric and topological
features.)  {\it Sheaves} are the appropriate mathematical tool
for connecting local information to global behavior, so we develop {\it
  sheaf theoretic} computational tools for quantum graphs.  We obtain
sheaf-theoretic proofs of certain graph operations that preserve
quantum graph structure, which allows great computational
simplification.

We give a brief introduction to the key ideas of sheaf theory in
Section \ref{sheaf_intro_sec}.  For a more detailed exposition, we
refer the reader to Appendix 7 of \cite{Hubbard} and to \cite{Bredon}.
Building on this introductory material, we give an explicit definition
of the sheaf structure to be used on a quantum graph in Section
\ref{sheaf_def_sec}.  The global structure of this sheaf is computed
in Section \ref{space_sec} (Theorem \ref{general_tcoh_thm}), which
explicitly demonstrates the topological and geometric dependence of
the space of solutions.  As an aside, we note how this recovers an
inverse spectral result of previous authors.  Finally, in Section
\ref{geometry_extract_sec}, we show how combining the topology and a
single solution provides detailed geometric information about the
graph.

\subsection{A Brief introduction to sheaf theory}
\label{sheaf_intro_sec}

A sheaf is a mathematical tool for storing local information over a
domain.  It assigns some algebraic object, a vector space in our case,
to each open set, subject to certain compatibility
conditions.  These compatibility conditions are of two kinds: (1)
those that pertain to restricting the information from a larger to a
smaller open set, and (2) those that pertain to assembling information
on small open sets into information on larger ones.  What is of
particular interest is the relationship of the global information,
which is valid over the entire graph, to the topology of that graph.
Additionally, sheaf theory identifies classes of transformations of
the underlying graph that preserve the global information.  These
transformations permit us to simplify the graphs with no loss of generality.

\subsubsection{Elementary definitions for sheaves}
In this section, we follow the introduction to sheaves given in
Appendix 7 of \cite{Hubbard}, largely for its direct treatment of
sheaves over tame spaces.  For more a more general, and more
traditional approach, compare our discussion with \cite{Bredon}.

\begin{df}
A {\it presheaf} $F$ is the assignment of a vector space $F(U)$ to each
open set $U$ and the assignment of a linear map $\rho_U^V: F(U) \to F(V)$ for
each inclusion $V \subseteq U$.  We call the map $\rho_U^V$ the {\it
  restriction map} from $U$ to $V$.  Elements of $F(U)$ are called
{\it sections} of $F$ {\it defined over} $U$.  
\end{df}

\begin{df}
A {\it sheaf} $\mathcal{F}$ is a presheaf $F$ that satisfies the gluing axioms:
\begin{itemize}
\item (Monopresheaf) Suppose that $u \in \mathcal{F}(U)$ and that $\{U_1,
  U_2,...\}$ is an open cover of $U$.  If $\rho_U^{U_i} u = 0$ for
  each $i$, then $u=0$ in $\mathcal{F}(U)$.  Simply: sections that agree
  everywhere locally also agree globally.
\item (Conjunctivity) Suppose $u \in \mathcal{F}(U)$ and $v \in \mathcal{F}(V)$ are
  sections such that $\rho_U^{U\cap V} u = \rho_V^{U \cap V} v$.  Then
  there exists a $w \in \mathcal{F}(U \cup V)$ such that $\rho_{U \cup V}^U w =
  u$ and $\rho_{U \cup V}^V w = v$.  In other words, sections that
  agree on the intersection of their domains can be ``glued together''
  into a section that is defined over the union of their domains of
  definition.
\end{itemize}
\end{df}

\begin{eg}
Standard examples of sheaves are 
\begin{itemize}
\item The collection of continous real-valued functions
  $C(X,\mathbb{R})$ over a topological space $X$.
  In this case, the sections defined over an open set $U$ is $\{f:U
  \to \mathbb{R} | f \text{ is continuous}\}$.
\item The collection of locally constant functions, which essentially
  assigns a constant to each connected component of each open set.  
\end{itemize}

In contrast, the collection of {\it constant functions} does not form
a sheaf.  Suppose $u$ and $v$ are distinct constants defined over
disjoint sets $U$ and $V$.  The sheaf axioms would indicate that since
their domains of definition ($U$ and $V$) are disjoint, then there
should exist a constant function defined over $U \cup V$ that
restricts to each.  This is of course impossible, since such a
function is only locally constant.
\end{eg}

We now turn to the problem of understanding the effects of graph
operations on sheaves.  There are six famous operations on sheaves
that are important in the general theory, but only two of them
(cohomology and direct images) play a role in this article.

\subsubsection{Cohomology}
We can recast the conjunctivity axiom as testing if $\rho_U^{U\cap V}
u - \rho_V^{U \cap V} v$ is zero or not, rather than checking for
equality.  This can be viewed as looking at kernel of the linear map
$d:\mathcal{F}(U) \oplus \mathcal{F}(V) \to \mathcal{F}(U \cap V)$ given by $d(x,y)=\rho_U^{U\cap
  V}x-\rho_V^{U\cap V}y$.  Indeed, all of the elements of the kernel
of such a linear map correspond to the agreement of sections on $U
\cap V$.

On the other hand, the monopresheaf axiom indicates that the preimage
of zero under the map $d$ corresponds to the restriction of these
glued sections onto each of $U$ and $V$.  Indeed, any nonzero element
of the {\it image} of $d$ cannot be a section over $U \cup V$.  

These two points motivate a computational framework for working with
sheaves, called the \v{C}ech construction.  
\begin{df}
Suppose $\mathcal{F}$ is a sheaf on $X$, and that $\mathcal{U}=\{U_1,U_2,...\}$ is a cover
of $X$.  We define the {\it \v{C}ech cochain spaces}
$C^k(\mathcal{U};\mathcal{F})$ to be the direct sum of
the spaces of sections over each $k$-wise intersection of elements in
$\mathcal{U}$.  That is 
\begin{equation*}
C^k(\mathcal{U};\mathcal{F})=\bigoplus \mathcal{F}(U_{i_1} \cap
U_{i_2} \cap ... \cap U_{i_k}).
\end{equation*}
We define a sequence of linear maps 
\begin{equation*}
d^k:C^k(\mathcal{U};\mathcal{F}) \to C^{k+1}(\mathcal{U};\mathcal{F})
\end{equation*}
by 
\begin{equation*}
d^k(\alpha)(U_1,U_2,...,U_{k+1})=\sum_{i=0}^{k+1} (-1)^i \rho^{U_0
  \cap ... \hat{U}_i ... \cap U_{k+1}}_{U_0 \cap ... \cap U_{k+1}}
\alpha(U_0
  \cap ... \hat{U}_i ... \cap U_{k+1}),
\end{equation*}
where the hat means that an element is omitted from the list.
Note that these fit together into a sequence, called the {\it \v{C}ech
cochain complex}: 
\begin{equation*}
\begin{CD}
0 \to C^0(\mathcal{U};\mathcal{F}) @>d^0>> C^1(\mathcal{U};\mathcal{F}) @>d^1>> ...
\end{CD}
\end{equation*}
A standard computation shows that $d_k \circ d_{k-1} = 0$, so that we can define
the {\it $k$-th \v{C}ech cohomology space}
\begin{equation*}
\check{H}^k(\mathcal{U};\mathcal{F}) = \text{ker } d_k / \text{image } d_{k-1}.
\end{equation*}
\end{df}

The $\check{H}^k$ apparently depend on the choice of cover
$\mathcal{U}$, but for {\it good} covers (much as in the Nerve Lemma),
this dependence vanishes. Leray's theorem for sheaves states that
$\check{H}^k(\mathcal{U};\mathcal{F})$ is the same for each good
cover.  Indeed, it then depends only on $X$.  So we write
$H^k(X;\mathcal{F})=\check{H}^k(\mathcal{U};\mathcal{F})$, the sheaf's
{\it cohomology} in the case that $\mathcal{U}$ is a good cover.

A little thought about good covers on graphs reveals two important facts:
\begin{itemize}
\item for a metric graph $X$, $H^k(X;\mathcal{F})=0$ for $k>1$, and 
\item $H^0(X;\mathcal{F})$ is isomorphic to the space of global sections $\mathcal{F}(X)$.  
\end{itemize}

The first fact follows immediately from a covering dimension argument.
The latter fact comes from our construction, and that the image of
$d^{-1}$ ({\it not} $d$ inverse!) is zero.  This suggests a
computational way to examine solutions to quantum graphs by way of
sheaf theory: we construct a sheaf that has local sections being the
{\it locally valid} solutions to the quantum graph, and then compute
its cohomology.  What is especially valuable about this approach is
that there is {\it additional} information in $H^1(X;\mathcal{F})$.
This additional information plays a useful role in Section
\ref{space_sec} and detects {\it resonance} phenomena in quantum
graphs.

By analogy with the Mayer-Vietoris sequence for homology, there is a
Mayer-Vietoris sequence for sheaf cohomology.  It is given by the
following theorem (also in \cite{Bredon} in a variety of forms):

\begin{thm}
Suppose that $A,B$ are two open subspaces of a graph $X$ that
cover $X$, and that $\mathcal{F}$ is a sheaf over $X$.  
Then the following {\it Mayer-Vietoris sequence} is an exact sequence:
\begin{equation*}
\begin{CD}
...\to H^k(X;\mathcal{F})@>r>>H^k(A;\mathcal{F})\oplus H^k(B;\mathcal{F})@>d>>H^k(A \cap B;\mathcal{F})@>\delta>>H^{k+1}(X;\mathcal{F})\to...
\end{CD}
\end{equation*}
In this sequence, $r$ comes from restriction maps in the obvious way,
$d$ is the composition of restriction maps and a difference: $d(x,y)=
\rho_A^{A\cap B} x - \rho_B^{A\cap B} y$, and $\delta$ is the
connecting homomorphism.  Note that notation has been abused above
slightly: by $H^k(A;\mathcal{F})$ we mean the $k$-th cohomology of the sheaf $\mathcal{F}$
restricted to subsets lying in $A$.
\end{thm}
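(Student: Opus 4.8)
The plan is to produce this sequence exactly as one produces the Mayer--Vietoris sequence in ordinary (co)homology: exhibit a short exact sequence of cochain complexes whose cohomologies are the three groups in the statement, then apply the zig--zag (snake) lemma from homological algebra (as in \cite{Hatcher_2001}). The two sheaf axioms are precisely what make the chain-level sequence exact.

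First I would fix a model for $H^k(-;\mathcal{F})$ that restricts cleanly to open subsets. Following the more traditional approach of \cite{Bredon}, choose a flasque (for instance, injective) resolution $0\to\mathcal{F}\to\mathcal{I}^0\to\mathcal{I}^1\to\cdots$. For any open $U\subseteq X$, the restriction $\mathcal{I}^\bullet|_U$ is again a flasque resolution of $\mathcal{F}|_U$, so $H^k(U;\mathcal{F})=H^k\big(\Gamma(U;\mathcal{I}^\bullet)\big)$; on a metric graph this agrees with the \v{C}ech description via good covers already used above, by Leray's theorem together with the covering-dimension argument. (One can instead argue purely \v{C}ech-style by picking a good cover $\mathcal{U}$ of $X$ fine enough that its elements contained in $A$ already cover $A$, and likewise for $B$ and $A\cap B$ — possible on a graph, but it forces one to be careful with ``mixed'' index tuples, which is why I prefer the resolution model.)

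Next, for each fixed degree $q$ consider
\[
0\;\to\;\Gamma(X;\mathcal{I}^q)\;\xrightarrow{\;\phi\;}\;\Gamma(A;\mathcal{I}^q)\oplus\Gamma(B;\mathcal{I}^q)\;\xrightarrow{\;\psi\;}\;\Gamma(A\cap B;\mathcal{I}^q)\;\to\;0,
\]
where $\phi(s)=(s|_A,s|_B)$ and $\psi(a,b)=a|_{A\cap B}-b|_{A\cap B}$. Injectivity of $\phi$ is the monopresheaf axiom applied to the open cover $\{A,B\}$ of $X$; exactness at the middle term is the conjunctivity (gluing) axiom, since a pair of sections agreeing on $A\cap B$ glues to a section over $A\cup B=X$; and surjectivity of $\psi$ uses flasqueness, as any section over $A\cap B$ extends to a section over $A$, which pairs with $0$ to hit it. Because the resolution differentials $\mathcal{I}^q\to\mathcal{I}^{q+1}$ commute with every restriction map, these assemble into a short exact sequence of cochain complexes $0\to\Gamma(X;\mathcal{I}^\bullet)\to\Gamma(A;\mathcal{I}^\bullet)\oplus\Gamma(B;\mathcal{I}^\bullet)\to\Gamma(A\cap B;\mathcal{I}^\bullet)\to0$. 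The zig--zag lemma then yields the long exact sequence, which under the identifications of the previous step is exactly the asserted Mayer--Vietoris sequence, with $r$ induced by restriction, $d$ the signed difference of restrictions, and $\delta$ the connecting homomorphism.

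I expect the work to be bookkeeping rather than anything deep, and the one point genuinely worth care is the surjectivity of $\psi$: with an arbitrary acyclic resolution this can fail, so the resolution must be kept flasque (equivalently, in the \v{C}ech formulation, the cover must be kept honestly subordinate to $\{A,B\}$). A secondary point is checking naturality of $\delta$, so that the output is the expected sequence and not merely an abstractly exact one. Note that the $1$-dimensionality of $X$ is not needed for exactness — it is used only to know that $H^k(X;\mathcal{F})=0$ for $k\ge 2$, so that in practice only the $k=0,1$ portion of the sequence carries information.
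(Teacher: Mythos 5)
Your proposal is correct and is the standard derivation: a flasque resolution, the short exact sequence of section complexes $0\to\Gamma(X;\mathcal{I}^\bullet)\to\Gamma(A;\mathcal{I}^\bullet)\oplus\Gamma(B;\mathcal{I}^\bullet)\to\Gamma(A\cap B;\mathcal{I}^\bullet)\to 0$ (injectivity from the monopresheaf axiom, middle exactness from gluing, surjectivity from flasqueness), followed by the zig--zag lemma. The paper supplies no proof of its own for this theorem, deferring entirely to \cite{Bredon}, and your argument is precisely the one that citation stands in for, including the correct caution that surjectivity of $\psi$ is the step that forces the resolution to be flasque rather than merely acyclic.
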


\subsubsection{Direct images}

\begin{df}
Suppose $f:X \to Y$ is a continuous function between topological
spaces.  If $\mathcal{F}$ is a sheaf over $X$, then the {\it direct image} of
$\mathcal{F}$ through $f$ is written $f_* \mathcal{F}$ and is given by its action
\begin{equation*}
(f_* \mathcal{F})(U)= \mathcal{F}(f^{-1}(U))
\end{equation*}
on open sets $U \subseteq Y$.  Clearly, $f_* \mathcal{F}$ is a sheaf over $Y$.
\end{df}

The key question is when the cohomologies of $\mathcal{F}$ and $f_* \mathcal{F}$ agree.
If they do agree, and the combinatorial structure of $Y$ is much
simpler than $X$, then it is usually much easier to compute the
cohomology of $f_* \mathcal{F}$ rather than of $\mathcal{F}$.  We make extensive use of
this in the sequel.  This essential question is answered by the
following result:

\begin{thm}(The Vietoris mapping theorem, Theorem 11.1 in Chapter II of \cite{Bredon}) Suppose that $f:X \to Y$ is
  a closed, continuous function between metric graphs, and $\mathcal{F}$ is a sheaf over
  $X$.  If $H^k(f^{-1}(\{y\});\mathcal{F})$ for $k>0$ and each $y \in Y$, then
  $\mathcal{F}$ and $f_* \mathcal{F}$ have isomorphic cohomology.
\end{thm}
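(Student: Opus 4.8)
The plan is to run the Leray spectral sequence of the map $f$ and show that the closedness hypothesis collapses it onto its bottom row. First the two cheap reductions. Since $X$ and $Y$ are metric graphs, $H^k(\,\cdot\,;\mathcal F)$ and $H^k(\,\cdot\,;f_*\mathcal F)$ vanish for $k>1$, so only degrees $0$ and $1$ are at stake. In degree $0$ there is nothing to do: $H^0(X;\mathcal F)=\mathcal F(X)=\mathcal F(f^{-1}(Y))=(f_*\mathcal F)(Y)=H^0(Y;f_*\mathcal F)$, and this equality is induced by the defining formula for the direct image. So it remains to produce a natural isomorphism in degree $1$ (equivalently, to show that the canonical edge map $H^1(Y;f_*\mathcal F)\to H^1(X;\mathcal F)$ is an isomorphism).

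The engine is the Leray spectral sequence $E_2^{p,q}=H^p\!\left(Y;R^qf_*\mathcal F\right)\Rightarrow H^{p+q}(X;\mathcal F)$, where $R^qf_*\mathcal F$ is the $q$-th higher direct image, i.e. the sheafification of $U\mapsto H^q(f^{-1}(U);\mathcal F)$. On a graph only the rows $q=0,1$ can be nonzero, so if we can show $R^1f_*\mathcal F=0$ then $E_2$ is concentrated in the row $q=0$, the sequence degenerates, and the edge homomorphism $H^p(Y;f_*\mathcal F)=E_2^{p,0}\xrightarrow{\ \sim\ }H^p(X;\mathcal F)$ is the desired natural isomorphism. (Even without the dimension restriction, vanishing of $R^qf_*\mathcal F$ for every $q>0$ collapses the sequence; the graph case merely makes the bookkeeping trivial.) Thus everything reduces to the single claim: $R^qf_*\mathcal F=0$ for $q>0$.

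To prove that claim it suffices to check stalks. The stalk of $R^qf_*\mathcal F$ at $y$ is $\varinjlim_{U\ni y}H^q(f^{-1}(U);\mathcal F)$, the colimit over open neighborhoods $U$ of $y$. Here the closedness of $f$ enters: if $N$ is any open neighborhood of the fiber $f^{-1}(y)$, then $X\setminus N$ is closed, $f(X\setminus N)$ is closed and misses $y$ (since $f^{-1}(y)\subseteq N$), so $U:=Y\setminus f(X\setminus N)$ is an open neighborhood of $y$ with $f^{-1}(U)\subseteq N$; hence the sets $f^{-1}(U)$ are cofinal among open neighborhoods of $f^{-1}(y)$. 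The fiber $f^{-1}(y)$ is a closed subset of the metric (hence paracompact) graph $X$, so it is a taut subspace, and the continuity property of sheaf cohomology over a directed family of neighborhoods gives $\varinjlim_{U\ni y}H^q(f^{-1}(U);\mathcal F)\cong H^q(f^{-1}(y);\mathcal F)$. By hypothesis this vanishes for $q>0$, so $R^qf_*\mathcal F=0$ for $q>0$, as needed.

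The step I expect to be the real obstacle — and the only place the closedness of $f$ is used — is the stalk identification $\varinjlim_{U\ni y}H^q(f^{-1}(U);\mathcal F)\cong H^q(f^{-1}(y);\mathcal F)$: without closedness the neighborhoods $f^{-1}(U)$ need not approximate the fiber, and the higher direct images can be nonzero. Everything else (the dimension reduction, the degree-$0$ identification, the collapse of the spectral sequence) is formal. Over metric graphs the cofinality argument above is elementary, so in this setting the obstacle reduces to invoking the standard continuity of \v{C}ech cohomology, which can alternatively be proved directly by a small Mayer-Vietoris computation comparing the fiber with a shrinking family of its tubular neighborhoods.
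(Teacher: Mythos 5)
Your proof is correct. Note that the paper does not actually prove this statement --- it is quoted verbatim from Bredon (Theorem 11.1, Chapter II) and used as a black box --- so there is no in-paper argument to compare against; what you have written is essentially the standard proof, the one Bredon himself gives via the Leray spectral sequence in Chapter IV. Your reduction is clean: the only nonformal content is the stalk identification $\varinjlim_{U\ni y}H^q(f^{-1}(U);\mathcal F)\cong H^q(f^{-1}(y);\mathcal F)$, and you correctly isolate its two ingredients, namely closedness of $f$ (which makes the saturated neighborhoods $f^{-1}(U)$ cofinal among neighborhoods of the fiber) and tautness of the fiber (which gives the continuity isomorphism for the colimit). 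In a metric graph the tautness is automatic, since metric spaces are hereditarily paracompact, so every subspace --- not just closed ones --- is taut; this is exactly the pair of observations the paper itself invokes when it applies the theorem in the proof of Lemma \ref{mcollapse} (``$f$ is a closed surjection \dots\ $X$ and $Y$ are both paracompact, so $f^{-1}$ is always taut''). The only cosmetic remark: the statement as printed in the paper omits ``$=0$'' after $H^k(f^{-1}(\{y\});\mathcal F)$, and your proof correctly supplies the intended hypothesis.
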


\subsection{Definition of a sheaf structure for quantum graphs}
\label{sheaf_def_sec}

The collection of locally-valid, continuous solutions to a
differential equation on $\mathbb{R}$ forms a sheaf: restricting the
domain of validity of a local solution produces a new local solution,
and local solutions can be joined if they agree on the intersection of
their domains of validity.  This construction also works on a quantum
graph, resulting in the sheaf of {\it excitations}.  

Since solutions on a single edge consist of two propagating waves, we
obtain a more explicit definition for the sheaf of excitations.  We
construct the sheaf of excitations as a direct image of a sheaf over a
directed graph with two oppositely-oriented edges for each undirected
edge of our original quantum graph. \cite{Kuchment_2008} Our
construction permits a slight generalization over the sheaf of
excitations in that the values of the solution may take values in any
field rather than the complex numbers.  This simplifies the notation,
so we perform all further computations at this level of generality.
The resulting general object is called a {\it transmission-line
  sheaf}.

\begin{prop}
Local solutions of a quantum graph form a sheaf, called the {\it excitation sheaf}.
\end{prop}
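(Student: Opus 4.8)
The plan is to verify directly that the assignment $U\mapsto\mathcal{E}(U)$, where $\mathcal{E}(U)$ denotes the set of continuous $u\colon U\to\mathbb{C}$ solving $\Delta u+k^2u=0$ on each edge-interior meeting $U$ and satisfying the Kirchoff condition $\sigma_x(u)=0$ at every $x\in U$, together with ordinary restriction of functions as the maps $\rho_U^V$, satisfies the presheaf requirements and the two gluing axioms. The single observation driving every step is that membership in $\mathcal{E}(U)$ is the conjunction of conditions each \emph{anchored at one point of $U$}---the ODE $u''+k^2u=0$ at interior points and the continuity-plus-vanishing-derivative-sum condition at vertices---so it is a purely local (germ-level) property of $u$.

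First I would check $\mathcal{E}$ is a presheaf of $\mathbb{C}$-vector spaces: each $\mathcal{E}(U)$ is a vector space because the defining conditions are linear and homogeneous, and the restriction maps are linear and functorial, $\rho_V^W\circ\rho_U^V=\rho_U^W$ for $W\subseteq V\subseteq U$. That $\rho_U^V u\in\mathcal{E}(V)$ whenever $u\in\mathcal{E}(U)$ and $V\subseteq U$ is exactly the locality remark: the ODE or Kirchoff condition tested at a point of $V$ sees only the germ of $u$ there, which is untouched by restriction.

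The monopresheaf axiom is immediate: if $\rho_U^{U_i}u=0$ for every member of a cover $\{U_i\}$ of $U$, then $u$ vanishes pointwise on $U=\bigcup_i U_i$. The substantive axiom is conjunctivity. Given $u\in\mathcal{E}(U)$ and $v\in\mathcal{E}(V)$ with $\rho_U^{U\cap V}u=\rho_V^{U\cap V}v$, I would set $w\colon U\cup V\to\mathbb{C}$ equal to $u$ on $U$ and to $v$ on $V$; this is well-defined precisely because $u,v$ agree on $U\cap V$. Openness of $U$ and $V$ gives every point of $U\cup V$ a neighborhood lying in $U$ or in $V$ on which $w$ coincides with a continuous function, so $w$ is continuous; and every point of $U\cup V$ lies in $U$ or in $V$, where the relevant pointwise condition for $w$ holds because it holds for $u$ or for $v$. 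Hence $w\in\mathcal{E}(U\cup V)$ with $\rho_{U\cup V}^Uw=u$ and $\rho_{U\cup V}^Vw=v$ (and $w$ is the unique such section, by the monopresheaf axiom). Since $(u,v)\mapsto w$ is additive and homogeneous, $\mathcal{E}$ is a sheaf of vector spaces.

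I do not expect a real obstacle; the only point worth spelling out is that no Kirchoff condition ever ``reaches across'' from $U\setminus V$ to $V\setminus U$, since $\sigma_x$ depends only on the germ at the single vertex $x$---so gluing cannot be spoiled by an incompatibility of the two pieces away from their overlap, which is what could fail for a more global side condition. I would close by noting that the verification is formal and uses nothing special about $\mathbb{C}$; this is what lets the next subsections reinterpret $\mathcal{E}$ as a direct image of a sheaf on the doubled directed graph and replace the coefficients by an arbitrary field to obtain the transmission-line sheaf.
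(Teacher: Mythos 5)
Your proof is correct and takes essentially the same route as the paper: a direct verification of the presheaf and gluing axioms, resting on the observation that both the edge ODE and the Kirchoff vertex conditions are local (pointwise) conditions, so restriction preserves them and a pointwise-glued function inherits them from whichever piece covers the point. Your write-up is somewhat more systematic than the paper's sketch (which omits the monopresheaf axiom and the vector-space structure), but the underlying argument is identical.
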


Sheaves provide a convenient way to collate solutions to differential
equations.  For instance, \cite{Spencer_1969} discusses the situation
in great detail for systems of linear partial differential equations.
We sketch a proof here for our specific instance.

\begin{proof}
The appropriate sheaf is a correspondence between open sets (of the
topological space structure of the graph) and the space of continuous
functions over them, in which inclusion of open sets corresponds to
restriction of functions.  Solutions on an open set in the interior of
an edge remain solutions when restricted to a smaller domain.
Additionally, if solutions on two open sets within the interior of an
edge agree (pointwise) on the intersection of these two open sets,
then there is a solution defined on the union of these open sets.

The vertex conditions require continuity of solutions and some
additional constraints.  Suppose that $s$ is a solution defined on
$U$, an open set containing a vertex $v$.  Clearly, $s$ satisfies the
vertex conditions at $v$.  Now if $V \subset U$ is another open set,
restricting $s$ to $V$ will result in another solution.  In
particular, if $v \in V$, then the $s|V$ will still satisfy the vertex
condition at $v$.  Now if $s_1$ and $s_2$ are solutions defined on
$U_1$ and $U_2$, and $v$ lies in $U_1 \cap U_2$, then both $s_1$ and
$s_2$ will satisfy the vertex conditions at $v$.  If $s_1|(U_1 \cap
U_2)=s_2|(U_1 \cap U_2)$, then we can define a new solution $s$
pointwise by
\begin{equation*}
s(x)=\begin{cases}
s_1(x) & \text{ if }x \in U_1\\
s_2(x) & \text{ if }x \in U_2\\
\end{cases}
\end{equation*}
This new solution clearly satisfies the vertex condition at $v$, and is otherwise still at solution.
\end{proof}

The excitation sheaf is a complicated object, one in which the
combinatorial structure is obscured.  In particular, it is unclear how
to compute its sheaf cohomology, which encapsulates all of its
associated global structure.  Therefore, we construct a new sheaf
called a {\it flow sheaf} with simpler structure that has isomorphic
cohomology.  The decisive feature of this flow sheaf is that it is
{\it constructible} with respect to the graph structure.  This means
that over edges or vertices, the sheaf structure is
constant. \cite{Schurmann} Only where edges attach to vertices does
the sheaf structure change, and then only in a constrained fashion.

A flow sheaf, however, is defined over a directed graph.  We'll use a
direct image construction to write the appropriate sheaf definition
for quantum graphs, which are undirected.

\begin{df}
\label{mflow_df}
Suppose that $\mathbb{F}$ is a field and that $X$ is a directed graph.
Given that $X$ has the usual topology, let $\mathcal{U}=\{U_\alpha,V_\beta\}$ be a base for the topology of $X$ where each $U_\alpha$ is connected and contains exactly on vertex and each $V_\beta$ is contained in the interior of a single edge.
An {\it $\mathbb{F}$-flow sheaf on $X$} is the sheafification of the following presheaf $F$, defined on $\mathcal{U}$:
\begin{itemize}
\item $F(U_\alpha)$ is a direct sum of copies $\mathbb{F}$, one for each incoming edge into the unique vertex contained in $U_\alpha$,
\item $F(V_\beta)=\mathbb{F}$,
\item if $V_\beta \subset U_\alpha$ and $V_\beta$ is contained in the $n$-th incoming edge, the the restriction map $F(U_\alpha) \to F(V_\beta)$ is projection onto the $n$-th copy of $\mathbb{F}$,
\item if $V_\beta \subset U_\alpha$ and $V_\beta$ is contained in the $n$-th outgoing edge, then there is a fixed $\mathbb{F}$-linear map $F(U_\alpha) \to F(V_\beta)$ depending only on the vertex $v$ contained in $U_\alpha$ and $n$ (the outgoing edge).
This collection of maps, one for each outgoing edge, is called the {\it local coding at $v$} and is denoted by $\phi_n(v)$.
\end{itemize}
\end{df}

The following proposition is immediate from the definition of a constructible sheaf.
\begin{prop}
Flow sheaves are constructible with respect to the skeleta of $X$.
\end{prop}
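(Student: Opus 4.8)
The plan is to unwind the definition of constructibility and verify it stratum by stratum against Definition \ref{mflow_df}. Recall that a sheaf $\mathcal{F}$ on $X$ is \emph{constructible with respect to the skeleta} when, for the stratification of $X$ into the $0$-skeleton $X^{(0)}$ (the vertex set) and the open strata $X^{(1)} \setminus X^{(0)}$ (the disjoint union of the interiors of the edges), the restriction $\mathcal{F}|_S$ to each stratum $S$ is a locally constant sheaf; see \cite{Schurmann} for the general framework. Since the skeleta give the filtration $X^{(0)} \subseteq X^{(1)} = X$, and each connected component of $X^{(1)} \setminus X^{(0)}$ is a single open edge, there are only two kinds of strata to inspect: vertices and edge-interiors. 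So the whole proof is the bookkeeping of Definition \ref{mflow_df}, which is exactly why the statement is flagged as immediate.

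First I would treat the edge-interior strata. For a basic open $V_\beta$ in the interior of a single edge, Definition \ref{mflow_df} gives $F(V_\beta) = \mathbb{F}$, and for nested basic opens $V_\beta \subseteq V_{\beta'}$ lying in one edge the restriction map must be the identity on $\mathbb{F}$ (these opens contain no vertex, so the single copy of $\mathbb{F}$ attached to the edge is the only datum present, and composability of restrictions on the base forces the map to be an isomorphism). Consequently the stalk of the sheafification at any interior point $x$ of an edge is $\mathbb{F}$, and every transition map along that edge is an isomorphism; hence $\mathcal{F}$ restricted to the open edge is isomorphic to the constant sheaf $\underline{\mathbb{F}}$, in particular locally constant.

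Next I would treat the $0$-skeleton. Since $X^{(0)}$ is a discrete subspace, \emph{every} sheaf on $X$ restricts to a locally constant sheaf on $X^{(0)}$ — on a discrete space the constancy condition is vacuous — so there is nothing to check beyond recording the stalks. Taking the colimit over basic opens $U_\alpha \ni v$, each of which contributes a copy of $\bigoplus \mathbb{F}$ indexed by the incoming edges at $v$ with identity restrictions among them, one obtains $\mathcal{F}_v \cong \bigoplus_{e \text{ incoming at } v} \mathbb{F}$, of rank equal to the in-degree of $v$. The local codings $\phi_n(v)$ enter only as the generization maps $\mathcal{F}_v \to \mathcal{F}_x$ for $x$ an edge point near $v$; they describe the behaviour of $\mathcal{F}$ across strata, not within a stratum, and so are irrelevant to local constancy on the strata themselves.

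The only point requiring a word of care is the passage from the presheaf $F$ on the base $\mathcal{U}$ to its sheafification $\mathcal{F}$: I would note that sheafification does not change stalks, so the stalk identifications above hold for $\mathcal{F}$, and that the transition isomorphisms along an edge persist after sheafification. With this stalk-and-transition description in hand, local constancy of $\mathcal{F}|_S$ on each stratum $S$ is immediate, and therefore $\mathcal{F}$ is constructible with respect to the skeleta of $X$. I do not anticipate any genuine obstacle; the content is entirely in matching Definition \ref{mflow_df} to the definition of constructibility.
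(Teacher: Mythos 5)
Your proof is correct and matches the paper's intent: the paper offers no argument beyond declaring the proposition immediate from the definition of constructibility, and your stratum-by-stratum verification (constant sheaf $\underline{\mathbb{F}}$ on each open edge, vacuous local constancy on the discrete vertex set, with the coding maps $\phi_n(v)$ relegated to cross-strata generization data) is exactly the routine unwinding being alluded to. The one point you rightly flag — that restrictions between nested $V_\beta$'s within a single edge must be identities, which the definition leaves implicit — is handled correctly.
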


Now let us consider the more specific case of a direct image of a flow
sheaf that is isomorphic to the excitation sheaf defined earlier.
Suppose $\mathcal{E}_k$ is an excitation sheaf with wavenumber $k$
over a quantum graph $X$.  Let $V$ and $E$ denote the vertex and edge
sets of $X$, respectively.  Denote by $L_e$ the length of edge $e \in
E$.

Construct a new graph $Y$ (that is a directed metric graph), by
replacing each undirected edge $v_1\leftrightarrow v_2$ of $X$ by a pair of
opposing edges $v_1\to v_2, \; v_2\to v_1$.  Let $f:Y \to X$ be the map
that sends each such pair of directed edges to the corresponding
undirected edge of $X$, as shown in Figure \ref{tlsheaf_fig}.  We
construct a $\mathbb{C}$-flow sheaf $\mathcal{F}$ over $Y$ with coding
maps tailored to work like the Kirchoff conditions, as follows.
Consider a vertex $v \in V$, with degree $n$ in $X$.  Label the
incoming edges (in $Y$) $\{a_1,a_2,...a_n\}$ and the outgoing edges
$\{b_1,b_2,...b_n\}$ so that $f(a_i)=f(b_i)$ for each $i$.  Define the
coding map by
\begin{equation}
\label{kirchoff}
\phi_i(v)(z_1,z_2,...z_n)=\frac{2}{n}\left( \sum_{j=1}^n e^{\sqrt{-1} kL_{a_j}} z_j \right) - e^{\sqrt{-1} kL_{a_i}}z_i,
\end{equation}
where $z_i$ is the value of a section of $\mathcal{F}$ restricted to the $i$-th incoming edge

\begin{figure}
\begin{center}
\includegraphics[width=2.5in]{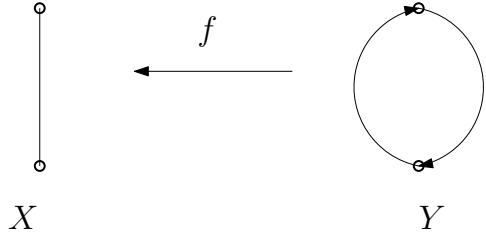}
\caption{Construction of a transmission line sheaf on an undirected graph from a flow sheaf on a directed graph}
\label{tlsheaf_fig}
\end{center}
\end{figure}

We call $\mathcal{G}_k=f_* \mathcal{F}$ a {\it transmission line sheaf}.
Any sheaf constructed over an undirected graph as the direct image of an $\mathbb{F}$-flow 
sheaf with coding maps satifying \eqref{kirchoff} (but with $\mathbb{F}$ endomorphisms 
specified on the edges, rather than phases) will also be called an $\mathbb{F}$-transmission line sheaf.

Suppose $\mathcal{F}$ is a transmission line sheaf on a graph $X$.  A {\it resonant} edge is one in which the edge endomorphisms are identity maps.  If the edge endomorphisms differ from the identity, we call the edge {\it nonresonant}.

\begin{prop}
The excitation sheaf $\mathcal{E}_k$ and the transmission line sheaf
$\mathcal{G}_k$ are isomorphic as sheaves.  As a result, we may study
the solutions to a quantum graph by computing the cohomology of the
transmission line sheaf instead.
\begin{proof}
One need only observe that the Kirchoff conditions are equivalent to the coding map \eqref{kirchoff} employed at the vertices of $X$, by a straightforward algebraic transformation.
\end{proof}
\end{prop}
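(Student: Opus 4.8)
The plan is to verify directly that, on each edge and at each vertex, the sections of the transmission line sheaf $\mathcal{G}_k = f_*\mathcal{F}$ are exactly the locally-valid solutions to the Helmholtz equation satisfying the Kirchoff conditions, i.e.\ the sections of the excitation sheaf $\mathcal{E}_k$. Since both are constructible with respect to the graph stratification, it suffices to exhibit a natural isomorphism on a base of open sets of the two standard types: sets $V_\beta$ contained in the interior of a single edge, and connected sets $U_\alpha$ containing exactly one vertex. The isomorphism on the interior of an edge is the content of \eqref{travel_eqn}: a section over $V_\beta$ is a pair $(c_1,c_2)$ of amplitudes of the two counter-propagating waves $c_1 e^{ikx} + c_2 e^{-ikx}$, which matches the two copies of $\mathbb{F}=\mathbb{C}$ assigned by $F$ to the two oppositely-oriented directed edges lying over that edge. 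One must check that the restriction maps of $\mathcal{G}_k$ to smaller subintervals agree with literal restriction of functions; this is where the phase factors $e^{\sqrt{-1}kL}$ in Definition \ref{mflow_df} and \eqref{kirchoff} do their bookkeeping, translating ``amplitude measured at one end of a directed edge'' into ``amplitude measured at the other end.''

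The substantive step is the vertex. Fix a vertex $v$ of degree $n$, with incident directed edges $a_1,\dots,a_n$ (incoming) and $b_1,\dots,b_n$ (outgoing), $f(a_i)=f(b_i)$. A section of $\mathcal{G}_k$ over a small $U_\alpha \ni v$ is, by the direct-image definition, a section of $\mathcal{F}$ over $f^{-1}(U_\alpha)$; after sheafification this is determined by the incoming data $z = (z_1,\dots,z_n) \in \mathbb{C}^n$, with the outgoing data forced to be $\phi_i(v)(z)$ as in \eqref{kirchoff}. On the other side, a section of $\mathcal{E}_k$ over $U_\alpha$ is a continuous function on the star of $v$, solving Helmholtz on each edge and satisfying $\sigma_v(u)=0$. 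Parametrising each edge by arclength outward from $v$, write the restriction to the $i$-th edge as $u_i(t) = p_i e^{ikt} + q_i e^{-ikt}$, where $p_i$ is the amplitude of the wave leaving $v$ along edge $i$ and $q_i$ the amplitude arriving at $v$. Continuity at $v$ gives $p_i + q_i = p_j + q_j$ for all $i,j$; the derivative condition $\sum_i u_i'(0) = 0$ gives $\sum_i (p_i - q_i) = 0$. I would solve this linear system for the $p_i$ in terms of the $q_i$, obtaining precisely $p_i = \frac{2}{n}\sum_j q_j - q_i$; matching the incoming amplitudes $z_i$ (measured at the far end of the incoming directed edge $a_i$, hence carrying a factor $e^{\sqrt{-1}kL_{a_i}}$ relative to $q_i$) then reproduces \eqref{kirchoff} exactly. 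This is the ``straightforward algebraic transformation'' the statement alludes to.

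Having matched sections over a base, I would note that both assignments are genuine sheaves (the excitation sheaf by the preceding Proposition, the transmission line sheaf as a direct image of a sheafification), so the natural transformation defined on the base extends uniquely to a sheaf isomorphism $\mathcal{E}_k \cong \mathcal{G}_k$ on all of $X$. The corollary about cohomology is then immediate: isomorphic sheaves have isomorphic \v{C}ech cohomology with respect to any good cover, so in particular $H^0(X;\mathcal{E}_k) \cong H^0(X;\mathcal{G}_k)$ recovers the global solution space of the quantum graph, and $H^1$ agrees as well. The main obstacle is purely clerical rather than conceptual: keeping the orientation conventions and the edge-length phase factors consistent between the outward-arclength parametrisation used for the Kirchoff computation and the directed-edge amplitude convention baked into Definition \ref{mflow_df}, so that the algebra lands on \eqref{kirchoff} on the nose and not up to an unexplained phase. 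I would handle this by fixing, once and for all, that a flow-sheaf value on a directed edge records the wave amplitude at the \emph{head} of that edge, and checking the two edge-interior restriction maps against that convention before touching the vertex.
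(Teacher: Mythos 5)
Your proposal is correct and is essentially the paper's own argument written out in full: the paper's proof consists precisely of the observation that the Kirchoff conditions (continuity plus vanishing sum of outgoing derivatives) transform algebraically into the coding map \eqref{kirchoff}, which is exactly the computation $p_i = \frac{2}{n}\sum_j q_j - q_i$ you carry out at the vertex, with the phase factors $e^{\sqrt{-1}kL_{a_j}}$ accounting for propagation along each directed edge. The additional bookkeeping you supply (matching on a base of open sets, orientation and phase conventions) is a faithful expansion of what the paper calls a ``straightforward algebraic transformation.''
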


\begin{figure}
\begin{center}
\includegraphics[width=3in]{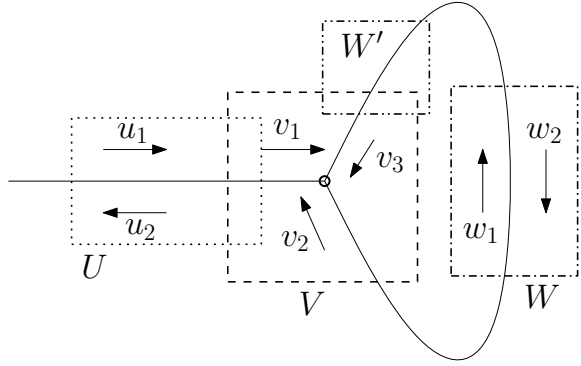}
\caption{Example of a transmission line sheaf}
\label{tlexample_fig}
\end{center}
\end{figure}

\begin{eg}
Consider the quantum graph shown in Figure
\ref{tlexample_fig}.  This graph has one vertex, one open edge, and
one loop edge.  Assume that the length of the loop is $L$.  We 
examine the sheaf on three different open sets, $U$, $V$, and $W$ as
shown in the figure.  The dimension of the space of sections over $U$
and $W$ is 2, and the dimension of the space of sections over $V$ is
3.  In the figure, basis elements (in terms of a traveling wave
decomposition) are shown.  

If we consider sections that are defined
over $U \cup V$, they must agree on $U \cap V$.  This induces the
following gluing conditions:
$u_1 = v_1$ and 
$u_2 = \frac{1}{3} v_1 + \frac{2}{3} e^{ikL} v_2 + \frac{2}{3} e^{ikL} v_3.$

Observe that since $V$ and $W$ are disjoint, there are no gluing
conditions required to construct sections over $V \cup W$.  If
instead, we moved $W$ to $W'$, so that $V \cap W' \not= \emptyset$,
two gluing conditions apply: $w_1 = v_3$ and
$w_2 = \frac{2}{3} v_1 + \frac{2}{3} e^{ikL} v_2 + \frac{1}{3} e^{ikL} v_3.$

\end{eg}

\subsection{Computation of the space of solutions}
\label{space_sec}

The objective of this section is to completely characterize the
cohomology of transmission line sheaves in terms of the geometry and
topology of the underlying metric graph.  The main result is the
following theorem:

\begin{thm}
\label{general_tcoh_thm}
Suppose $\mathcal{G}$ is an $\mathbb{F}$-transmission line sheaf with
on a connected quantum graph that has $l$ closed edges (of which
$l'$ of them have edge endomorphisms $E$ that satisfy $E-E^{-1}=0$, a
resonance condition for closed edges),
$m$ open edges, and $n$ resonant loops.  Then
\begin{equation}
\text{dim }H^0(X;\mathcal{G})=
\begin{cases}
n+1&\text{if }l=m=0\\
n+m&\text{if }l=0,m \not= 0\\
n+1+\min\{0,l'-1\}&\text{if }l\not= 0,m=0\\
n+m+\min\{0,l'-1\}&\text{otherwise}\\ 
\end{cases},
\end{equation}
\begin{equation}
\text{dim }H^1(X;\mathcal{G})\cong
\begin{cases}
n+1&\text{if }l=m=0\\
n&\text{if }l=0,m \not= 0\\
n+1+\min\{0,l'-1\}&\text{if }l\not= 0,m=0\\
n+\min\{0,l'-1\}&\text{otherwise}\\ 
\end{cases}.
\end{equation}
\end{thm}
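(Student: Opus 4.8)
The plan is to compute the \v{C}ech cohomology of $\mathcal{G}$ directly, using a good cover adapted to the graph structure and exploiting the constructibility of the underlying flow sheaf. First I would reduce to a minimal combinatorial model: since $\mathcal{G}=f_*\mathcal{F}$ for a flow sheaf $\mathcal{F}$ on the doubled directed graph $Y$, and since the fibers $f^{-1}(\{x\})$ are either a point or two points (hence have trivial higher cohomology), the Vietoris mapping theorem gives $H^k(X;\mathcal{G})\cong H^k(Y;\mathcal{F})$, so it suffices to work with the flow sheaf. Then I would take the good cover consisting of one small connected open set $U_v$ around each vertex $v$ (as in Definition \ref{mflow_df}) together with one open interval $V_e$ in the interior of each edge, arranged so that each $V_e$ meets exactly the two vertex-neighborhoods at its ends (one neighborhood if the edge is open or closed, the same one twice if it is a loop). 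The \v{C}ech complex then has $C^0=\bigoplus_v \mathbb{F}^{\deg v}\oplus\bigoplus_e\mathbb{F}$ and $C^1=\bigoplus_{e}\mathbb{F}^{(\text{number of incidences})}$, with $d^0$ recording, for each incidence of edge $e$ at vertex $v$, the difference between the restriction of the $U_v$-section to that edge-slot and the restriction of the $V_e$-section (the latter twisted by the edge endomorphism when the incidence is at the "outgoing" end). Since $H^k$ vanishes for $k\ge 2$ on a graph, the whole computation is linear algebra: $\dim H^0=\dim\ker d^0$ and $\dim H^1=\dim\operatorname{coker} d^0=\dim C^1-\dim C^0+\dim\ker d^0$.

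Next I would organize $\ker d^0$ as the space of "consistent flows." A section over all of $X$ amounts to choosing, on each edge, a value that is simultaneously (i) a coordinate of the incoming-edge data at the tail vertex and (ii), after applying the edge endomorphism, matches the value produced by the outgoing coding map \eqref{kirchoff} at the head vertex. Plugging the coding map into these constraints, the nonresonant edges (edge endomorphism $\ne \mathrm{id}$, resp. closed edges with $E-E^{-1}\ne 0$) force their associated degree of freedom to be rigidly determined or killed, while resonant loops and open edges contribute genuine free parameters. I would handle the four cases by tracking exactly how many free parameters survive: an open edge always contributes one (a wave can escape to infinity with no return constraint, giving the $+m$), each resonant loop contributes one (the $+n$), the presence of \emph{no} open or closed edges on a connected graph forces one extra global constant to be consistent everywhere (giving the $+1$ in the $l=m=0$ and $l\ne 0,m=0$ rows), and a family of $l'$ resonant closed edges over-determines the system by $l'-1$ relations once $l'\ge 1$, which is exactly the $\min\{0,l'-1\}$ correction (it is $0$ when $l'\le 1$ and negative otherwise). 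The $H^1$ formula then follows from the Euler-characteristic bookkeeping $\dim H^1=\dim H^0-(\dim C^0-\dim C^1)$ together with the fact that the topological Euler characteristic of the graph enters through the counts of loops and edges; in particular the drop from $n+1$ to $n$ in the $l=0,m\ne 0$ row reflects that an open edge contributes a genuinely acyclic direction.

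The main obstacle I expect is the careful case analysis of $\ker d^0$, specifically disentangling the interaction between the Kirchhoff coding map \eqref{kirchoff} and the edge endomorphisms when several closed edges are simultaneously resonant — this is where the $l'-1$ term comes from, and it requires showing that the constraints coming from distinct resonant closed edges are independent except for one global relation. A convenient way to make this rigorous is to collapse all nonresonant edges first (using the edge-collapse Lemma \ref{tlcollapse}, which preserves cohomology) so that only resonant loops, resonant closed edges, open edges, and a spanning tree of "neutral" edges remain, and then do the remaining finite linear-algebra computation on this reduced graph; the spanning-tree part contributes nothing to either cohomology group, isolating the claimed contributions. The remaining steps — verifying the cover is good, checking $d^0\circ d^{-1}=0$ trivially, and assembling the Euler-characteristic identity — are routine.
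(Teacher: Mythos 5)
Your overall strategy---reduce by edge collapse and then do an explicit \v{C}ech computation---is the same as the paper's, but two of your specific steps contain errors that would derail the proof. The most serious is your reading of the closed-edge correction: you claim the term $\min\{0,l'-1\}$ ``is $0$ when $l'\le 1$ and negative otherwise,'' and you propose to derive it from the $l'$ resonant closed edges ``over-determining the system by $l'-1$ relations.'' In fact $\min\{0,l'-1\}$ equals $-1$ when $l'=0$ and equals $0$ for every $l'\ge 1$: the penalty occurs precisely when \emph{no} closed edge is resonant, and having many resonant closed edges costs nothing. The actual mechanism, visible in the explicit coboundary matrix, is that each closed edge contributes a row of the form $(\dots,\,E_i-E_i^{-1},\,\dots)$ after one row operation; when $E_i-E_i^{-1}=0$ these rows become duplicates of one another and of an existing row, so the $l'$ resonant closed edges collectively contribute at most one unit less of rank than the $l-l'$ nonresonant ones would (the rank increment is $\max\{2l,\,2l-l'+1\}$). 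Your heuristic would instead produce a correction of $-(l'-1)$ for $l'\ge 1$, which contradicts the statement you are proving.

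Two further points. First, to arrive at a reduced graph containing only resonant loops you must dispose of the nonresonant loops, and Lemma \ref{tlcollapse} cannot do this: it explicitly requires the collapsed edge to have \emph{distinct} endpoints. You need Lemma \ref{tlcollapse_nonresonant}, whose proof is a separate Mayer--Vietoris computation showing that the Vietoris mapping theorem applies to a loop exactly when its endomorphism is nonresonant. Second, the step you defer as ``routine linear algebra'' is where the remaining content lives: after collapsing a spanning tree to a single vertex of degree $d=m+2n$, the Kirchhoff coding map couples all incident edges through the $\tfrac{2}{d}\sum$ term, and the extra kernel element in the $m=0$ rows arises because the relevant column sums equal $\tfrac{2n}{d}-1$, which vanishes exactly when $m=0$. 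Without carrying out that rank computation (or an equivalent one), the case distinctions in the theorem are asserted rather than proved.
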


This rather long section begins first with a discussion of some of the
implications of this result in Section \ref{obstruct_sec}.  The proof
of Theorem \ref{general_tcoh_thm} is outlined in Section
\ref{tcoh_pf_sec}, with the proofs of the key edge collapse lemmas
being postponed until Section \ref{collapse_sec}.

\subsubsection{Sheaf obstruction theory for quantum graphs}
\label{obstruct_sec}
Of course, Theorem \ref{general_tcoh_thm} strongly restricts the
geometry of the underlying graph, but it also restricts the topology.
In this section, we therefore also answer the question:

\begin{ques}
\label{coh_ques}
Given the cohomology of a transmission line sheaf $\mathcal{F}$ on a
quantum graph $X$, how much does it restrict the topology
of $X$?
\end{ques}

If there is loss in the quantum graph then no loop and no closed edges
are resonant.  Therefore, in lossy quantum graphs, the cohomology is
entirely determined by the number of open edges and whether there are
any closed edges (resonant or not).  The space of excitations of a
lossy quantum graph is essentially determined by external scattering
effects only.

Clearly, a graph with no resonant loops and no resonant edges will
have trivial $H^1$.  The converse is not true, though.  Closed edges
will typically induce a trivial $H^1$ if considered on their own,
regardless of edge endomorphism.  This means that the cohomology of
$\mathbb{F}$-transmission line sheaves is not a strong enough
invariant to distinguish between graphs that contain closed edges and
those that do not.  In particular, consider a graph $X$ that consists
of one vertex and $n$ open edges, and a graph $Y$ that consists of one
vertex, $n$ open edges, and one closed edge.  Regardless of the edge
endomorphism for the closed edge in $Y$, any transmission line sheaf
over $X$ will have the same cohomology as any other transmission line
sheaf over $Y$, though clearly their geometry differs.  This is a
sheaf-theoretic expression of the existence of isospectral quantum
graphs \cite{Parzanchevski_2009}, but goes a little farther: there
exist isospectral graphs whose eigenspaces agree as well.

A convenient way of summarizing the above discussion is that
nontrivial sheaf cohomology classes constitute {\it obstructions} to
the topology of the graph being trivial.

\begin{lem}
\label{obstruct_lem_1}
Suppose that $\mathcal{F}$ is an $\mathbb{F}$-transmission line sheaf
on a metric graph $X$ which has no closed edges.  Then
\begin{itemize}
\item the number of open edges in $X$ is equal to the Euler
  characteristic of $\mathcal{F}$, namely $\chi(\mathcal{F})=\text{dim
  }H^0(X;\mathcal{F})-\text{dim }H^1(X;\mathcal{F})$, and
\item there are at least $\text{dim }H^1(X;\mathcal{F})$ loops in $X$.
\end{itemize}
\end{lem}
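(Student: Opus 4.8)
The plan is to read off both bullets directly from Theorem~\ref{general_tcoh_thm} once we specialize to the case $l = 0$ (no closed edges). With $l = 0$ the theorem gives exactly two subcases: if additionally $m = 0$ then $\dim H^0 = \dim H^1 = n+1$, and if $m \ne 0$ then $\dim H^0 = n+m$ while $\dim H^1 = n$. In both subcases the difference $\chi(\mathcal F) = \dim H^0(X;\mathcal F) - \dim H^1(X;\mathcal F)$ equals $m$, the number of open edges (when $m = 0$ the difference is $(n+1)-(n+1) = 0 = m$; when $m \ne 0$ it is $(n+m) - n = m$). This establishes the first bullet. I would present this as a short case split, perhaps noting that it is convenient that the "$n+1$" correction in the $m=0$ case cancels in the Euler characteristic, so the formula is uniform.

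For the second bullet, the quantity $\dim H^1(X;\mathcal F)$ counts $n$ in the $m \ne 0$ case and $n+1$ in the $m = 0$ case, where $n$ is the number of \emph{resonant} loops. In either case $\dim H^1(X;\mathcal F) \le n+1 \le (\text{total number of loops in } X) + 1$; but I actually want the cleaner statement that there are at least $\dim H^1$ loops. The point is that a resonant loop is in particular a loop, so $n \le (\text{number of loops in } X)$, giving the bound immediately when $m \ne 0$. When $m = 0$ we have $\dim H^1 = n+1$, and here I would argue that a connected graph with no open edges, no closed edges, and $n$ resonant loops has first Betti number at least $n+1$: indeed such a graph has every edge lying on a cycle (no open or closed edges means no "dangling" edges), and the $n$ resonant loops are genuine loops, while the "+1" is absorbed by observing that a connected graph built only from cycles through a common structure already contributes to $b_1$ beyond the resonant-loop count — more carefully, one compares with the topological first Betti number via the exact sequence relating sheaf cohomology to the cohomology of the constant sheaf, or simply notes that the underlying graph, having no leaves, satisfies $b_1 = |E| - |V| + 1 \ge n+1$ whenever it carries $n$ resonant loops and is not itself a single circle.

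The main obstacle I anticipate is precisely this "+1" in the $m = 0$ subcase: one must rule out the possibility that $X$ is a graph with exactly $n$ loops total yet $\dim H^1 = n+1$, i.e.\ one must show the theorem's count is consistent with $b_1(X) \ge n+1$. I expect the resolution is that the $m=0$, $l=0$ case forces $X$ to be a wedge-like configuration of loops at a single vertex (after edge collapse, which is available by the lemmas referenced in Section~\ref{collapse_sec}), and there the topological loop count is exactly $b_1(X)$ and the sheaf cohomology adds one extra class coming from the locally-constant ``DC mode'' that survives on every transmission line sheaf over a graph with no open edges; counting the loops in the original (uncollapsed) graph then gives at least $n$ genuine loops, and the single-vertex-with-$n+1$-loops model shows the bound $\dim H^1 \le (\text{number of loops}) $ can fail by one only when... — at which point I would either sharpen the bullet's constant or verify directly from small cases that the total loop count is indeed $\ge \dim H^1$. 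In the write-up I would handle the $m \ne 0$ case in one line and spend the remaining effort making the $m = 0$ bound airtight, likely by invoking the edge-collapse reduction to a canonical form before counting.
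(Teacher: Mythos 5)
Your derivation of the first bullet is correct and is exactly the route the paper intends: the lemma is stated without a separate proof and is meant to be read off from Theorem \ref{general_tcoh_thm} by setting $l=0$, and your observation that the difference $\dim H^0 - \dim H^1$ equals $m$ uniformly across the $m=0$ and $m\neq 0$ subcases is the whole argument. The $m\neq 0$ half of the second bullet is also fine: $\dim H^1 = n$ counts resonant loops, every resonant loop is a loop, done.

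The gap is the $m=0$ subcase of the second bullet, and your proposal does not close it — nor can it be closed by the route you sketch. Your fallback inequality $b_1 = |E|-|V|+1 \ge n+1$ is false in exactly the configuration that matters: a bouquet of $n$ resonant circles at a single vertex has $|E|=n$, $|V|=1$, hence $b_1 = n$, not $n+1$. Worse, take $X$ to be a single circle carrying a resonant transmission line sheaf ($l=m=0$, $n=1$): Theorem \ref{general_tcoh_thm} gives $\dim H^1 = n+1 = 2$, yet $X$ has only one loop, so the bullet as literally stated fails. The extra cohomology class is the second independent global section (the two counter-propagating resonant waves both survive when there are no open edges to carry energy away), and it is not accounted for by any additional cycle in $X$. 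There is no ``DC mode'' comparison with the constant sheaf available in the paper that would absorb it. The honest resolutions are either to restrict the second bullet to the case $m\ge 1$ (the scattering setting the surrounding discussion is actually about, where $\dim H^1 = n$ and the bound is immediate), or to weaken the conclusion to ``at least $\dim H^1(X;\mathcal{F})-1$ loops.'' You correctly sensed that the $+1$ was the obstacle; the right move is to identify it as a defect of the statement in the $m=0$ case rather than to keep hunting for an argument, and your write-up should not assert the false Betti-number inequality on the way there.
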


The appearance of the Euler characteristic in Lemma
\ref{obstruct_lem_1} fits neatly with the interpretation of $H^1$ as
describing resonances.  In particular, one can think of the number of
open edges being the total number of excitations ($\text{dim
}H^0(X;\mathcal{F})$), which includes both externally-induced
excitations and internally-induced excitations (resonances) minus the
number of resonances ($\text{dim }H^1(X;\mathcal{F})$).  Therefore,
{\emph the cohomology of the excitation sheaf over a metric graph
  encompasses both scattering problems and eigenvalue problems.}

If we consider the more specific case of lossless excitation sheaves, then the
exact number of loops can be determined.  Suppose $X$ is a finite
metric graph with no closed edges.  As noted in Lemma
\ref{obstruct_lem_1}, the number of open edges can be computed from
the cohomology of any excitation sheaf.  Each edge of the graph has a
(set of) resonant frequencies, some of which may coincide.
Generically, none of the resonant frequencies coincide, and it is
therefore easy to locate the lowest resonant frequency of each edge,
which determines the length of each edge.  This is essentially the
idea of \cite{Gutkin_2001}.  However, the genericity condition can be
relaxed, since the cohomology of excitation sheaves captures edge
multiplicity information.

This discussion can be summarized by the following proposition and algorithm.

\begin{prop}
\label{topology_obstruct}
The topology and edge lengths of a finite metric graph with no closed edges is completely determined by the cohomologies of the excitation sheaves over it.
\end{prop}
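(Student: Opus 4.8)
The plan is to extract the graph's invariants from the family $\{H^\bullet(X;\mathcal{E}_k)\}_{k>0}$ one layer at a time, using Theorem~\ref{general_tcoh_thm} and Lemma~\ref{obstruct_lem_1} to read off numerical data and then the edge-collapse lemmas together with the Gutkin--Smilansky reconstruction idea for the remaining geometry. First I would recover the number $m$ of open edges: since $X$ has no closed edges we are in the $l=l'=0$ rows of Theorem~\ref{general_tcoh_thm}, where $\dim H^0(X;\mathcal{E}_k)-\dim H^1(X;\mathcal{E}_k)=m$ for every $k$ (this is exactly the Euler-characteristic statement of Lemma~\ref{obstruct_lem_1}, and it holds uniformly, including the degenerate case $m=0$). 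Having $m$ in hand, the same two rows give $\dim H^1(X;\mathcal{E}_k)=n(k)$ when $m\neq 0$ and $\dim H^1(X;\mathcal{E}_k)=n(k)+1$ when $m=0$, where $n(k)$ denotes the number of resonant loops at wavenumber $k$; either way the function $k\mapsto n(k)$ is determined by the cohomologies.

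Next I would turn $n(\cdot)$ into the multiset of loop lengths. A loop of length $L$ is resonant for $\mathcal{E}_k$ exactly when its edge endomorphism $e^{\sqrt{-1}kL}$ is the identity, i.e.\ when $k$ lies in the arithmetic progression $\tfrac{2\pi}{L}\mathbb{Z}$; hence $n(k)=\#\{\text{loops }\gamma : kL_\gamma\in 2\pi\mathbb{Z}\}$. Let $L_{\max}$ be the longest loop length. Then $k_1:=\inf\{k>0 : n(k)>0\}$ equals $2\pi/L_{\max}$, and because every shorter loop length $L<L_{\max}$ satisfies $L/L_{\max}\notin\mathbb{Z}_{>0}$, no shorter loop is resonant at $k_1$, so $n(k_1)$ is precisely the number of loops of length $L_{\max}$. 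Subtracting this known contribution $n(k_1)\cdot\mathbf{1}[\,k\in\tfrac{2\pi}{L_{\max}}\mathbb{Z}\,]$ from $n(k)$ and iterating recovers, in finitely many steps (the graph is finite, hence has finitely many distinct loop lengths), the complete multiset $\{L_\gamma\}$, and in particular the first Betti number $b_1(X)=\#\{\text{loops}\}$.

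Finally I would recover the combinatorial topology together with the lengths of the remaining finite (non-loop) edges. The idea, following \cite{Gutkin_2001}, is that as $k$ varies each finite edge contributes its own discrete set of resonant wavenumbers at which $\dim H^1(X;\mathcal{E}_k)$ jumps, and the lowest such wavenumber for an edge determines its length; the improvement here is that Theorem~\ref{general_tcoh_thm} records the \emph{multiplicity} of each jump, so coincidences among edge spectra no longer obstruct the reconstruction and the rational-independence genericity hypothesis of \cite{Gutkin_2001} can be dropped. Concretely I would first use the edge-collapse lemmas (Lemma~\ref{tlcollapse} and the material of Section~\ref{collapse_sec}) to reduce, without altering any $\mathcal{E}_k$-cohomology, to the compact core of $X$ and thence to its resonance spectrum counted with multiplicity, apply the Gutkin--Smilansky reconstruction to recover the compact core as a metric graph, and then reattach the $m$ open rays.

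The main obstacle is precisely this last bundle of steps. One must verify that the union over all $k$ of the supports of the cohomology jumps genuinely separates the individual edges, and that the jump multiplicities coincide with the edge-multiplicity data the Gutkin--Smilansky argument consumes; moreover, locating the attachment points of the open edges cannot be done from the Betti numbers alone and forces one to use the section spaces $H^0(X;\mathcal{E}_k)$ themselves (the scattering data) rather than just their dimensions. I would isolate these as the technical heart of the argument, everything upstream being bookkeeping with Theorem~\ref{general_tcoh_thm} and Lemma~\ref{obstruct_lem_1}.
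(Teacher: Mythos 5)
The portion of your argument that you actually carry out coincides with the paper's own proof. The paper establishes Proposition \ref{topology_obstruct} by exactly the procedure in your first two paragraphs: recover $m$ as the Euler characteristic of the sheaf via Lemma \ref{obstruct_lem_1}, read off the resonance count $n(k)$ from $\dim H^1(X;\mathcal{E}_k)$ (shifted by one when $m=0$), and then sieve out the multiset of resonant lengths by repeatedly taking the smallest remaining resonant wavenumber and deleting its harmonics with multiplicity. Your observation that the smallest resonant wavenumber isolates the longest cycle, so that the rational-independence genericity of \cite{Gutkin_2001} can be dropped, is precisely the point of the paper's iteration on the sets $S_i$.

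The gap is the one you flag yourself in your last two paragraphs, and it is genuine rather than merely technical. By Theorem \ref{general_tcoh_thm} the dimensions $\dim H^i(X;\mathcal{E}_k)$ depend only on $m$ and on the number of loops resonant at wavenumber $k$; equivalently, Lemma \ref{tlcollapse} says that collapsing a non-loop edge whose endpoints have degree greater than one is a cohomology isomorphism for \emph{every} $k$, irrespective of that edge's length. Hence the function $k\mapsto\dim H^1(X;\mathcal{E}_k)$ sees only the total lengths of $b_1(X)$ independent cycles, and is blind both to the lengths of spanning-tree edges and to how the cycles and open edges are attached to one another (a theta graph and a figure-eight whose effective cycle lengths match are indistinguishable at this level). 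So the program of reading off each individual edge's resonant frequencies cannot be completed from the dimension data, and your fallback---using the $H^0$ spaces themselves as scattering data---is the right instinct but is not carried out; it would also require interpreting ``the cohomologies'' as the spaces with their functorial structure rather than their dimensions. You should know that the paper's proof stops at exactly the same point: its algorithm recovers only $m$ and the cycle resonances, and the surrounding discussion asserts without argument that each edge's length is located by its lowest resonant frequency. Your proposal therefore reproduces the paper's argument in full and correctly isolates where that argument is incomplete.
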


\begin{alg}
Suppose that $X$ is a finite metric graph with no closed edges, and the $\mathcal{E}_k$ is the excitation sheaf on $X$ with wavenumber $k$.  
\begin{enumerate}
\item Determine the number of open edges $m$ using Lemma \ref{obstruct_lem_1}.
\item Define 
\begin{equation*}
S_0=\begin{cases}
\{(k,p)|k\in\mathbb{R}^+\text{ and dim }H^1(X;\mathcal{E}_k)=p\not= 0\} & \text{if }m\not= 0\\
\{(k,p)|k\in\mathbb{R}^+\text{ and dim }H^1(X;\mathcal{E}_k)=p-1\not= 0\} & \text{if }m=0\\
\end{cases}
\end{equation*}
This is the set of resonant wavenumbers of loops in $X$, counted with multiplicity.
\item Compute $k_i=\min \{k| \text{there exists a }p\in \mathbb{N} \text{ such that } (k,p)\in S_i\}$.  This is a fundamental resonant wavenumber of an edge in $X$.  Notice that this is well-defined since $S_i$ is countable.
\item Define 
\begin{equation*}
S_{i+1}=\{(k,p)\in S_i | k\notin k_i\mathbb{N}\} \cup 
\{(k,p-1)|(k,p)\in S_i \text{ and } k\in k_i\mathbb{N}\text{ and } p>1\}
\end{equation*}
\item Iterate steps 3 and 4.  Notice that since $X$ is finite, the $S_i$ stabilize at the empty set after finitely many iterations.
\end{enumerate}
\end{alg}

\subsubsection{Proof of Theorem \ref{general_tcoh_thm}}
\label{tcoh_pf_sec}

The proof relies on three edge collapse results (Lemma
\ref{mcollapse}, Lemma \ref{tlcollapse}, and Lemma
\ref{tlcollapse_nonresonant}) that permit combinatorial simplification
of the quantum graph without disrupting the structure of the
solutions.  Combinatorial edge collapse is not new, and plays an
important role in the quantum graphs literature, for instance
\cite{Kuchment_2008}.  These lemmas permit a direct, explicit
computation of the cohomology of transmission line sheaves. (For a
similar computational methodology, which finds a natural incarnation
as the sheaf theory we mention here, see \cite{Caudrelier_2010}.)

\begin{df}
Suppose that $X$ and $Y$ are finite metric graphs, which may be disconnected.
A map $f:X \to Y$ is called an {\it edge collapse} if
\begin{enumerate}
\item There exists an edge $e$ of $X$ for which $f(e)$ is a vertex of $Y$,
\item $f$ restricted to $X-e$ is a homeomorphism onto its image, which is $Y-f(e)$.
\end{enumerate}
\end{df}

\begin{figure}
\begin{center}
\includegraphics[width=4in]{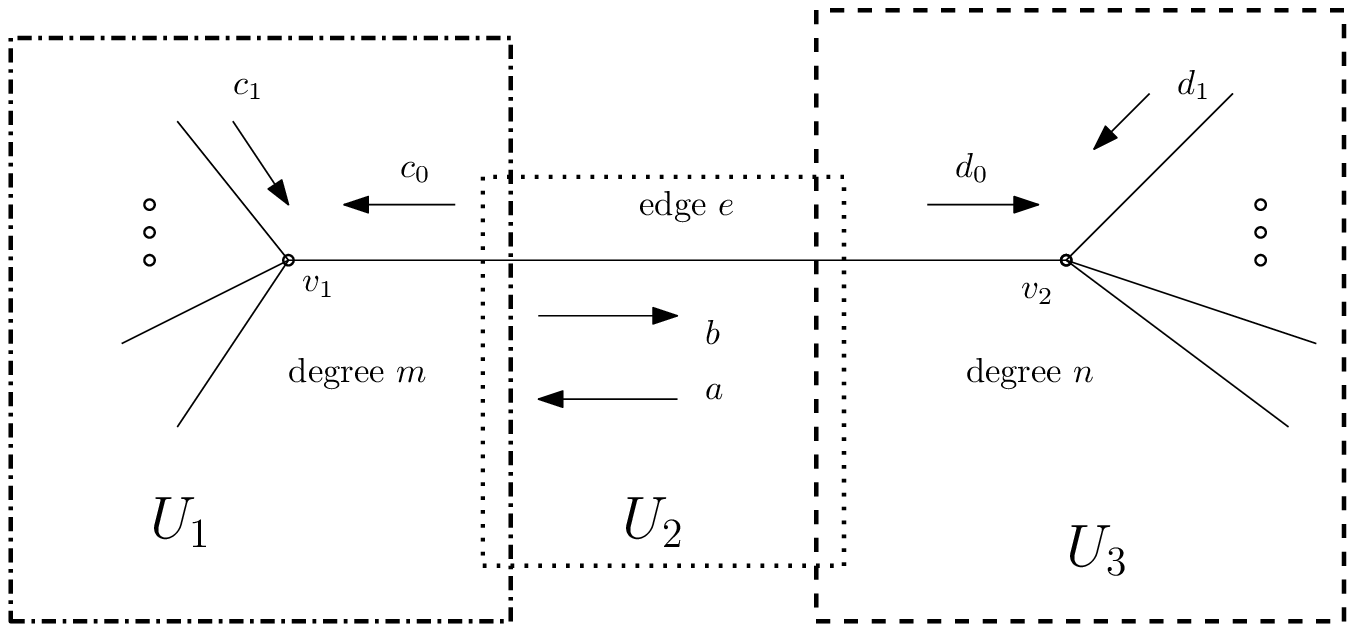}
\caption{Sets that cover the edge $e$ which is to be collapsed}
\label{tlcollapse_fig}
\end{center}
\end{figure}

The first edge collapse lemma applies to a flow sheaf on a directed
graph, and is the most general result for these kind of sheaves.

\begin{lem}
\label{mcollapse}
Suppose that $f:X \to Y$ is an edge collapse of an edge $e$ that has distinct endpoints, and
that $\mathcal{F}$ is an $\mathbb{F}$-flow sheaf on $X$. Then $f$ induces an isomorphism on cohomology:
\begin{equation}
H^*(X;\mathcal{F}) \cong H^*(Y;f_* \mathcal{F}),
\end{equation}
and $f_* \mathcal{F}$ is an $\mathbb{F}$-flow sheaf on $Y$ in which the coding maps at the vertices in $Y-f(e)$ are unchanged from those in $X-e$.
The coding map at $e$ is given by the following.  Let $v_1$ and $v_2$ be the endpoints of $e$, in which $e$ is incoming for $v_2$. 
Without loss of generality, suppose that $e$ is the first output of $v_1$ and the first input to $v_2$.  Then
\begin{equation}
\phi_i(f(e))(a_1,a_2,...a_m,b_2,b_3,...b_p)=
\begin{cases}
\phi_{i-1}(v_1)(a_1,a_2,...a_m)&\text{ if }i<n-1\\
\phi_{i-n}(v_2)(\phi_1(v_1)(a_1,a_2,...a_m),b_2,b_3,...b_p)&\text{otherwise}\\
\end{cases}
\end{equation}
where the input degree of $v_1$ is $m$, the output degree of $v_1$ is $n$, and the input degree of $v_w$ is $p$.  See Figure \ref{tlcollapse_fig} for a graphical representation of this situation.
\end{lem}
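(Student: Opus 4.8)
The plan is to cover $X$ by two open sets adapted to the edge $e$ being collapsed, run the Mayer--Vietoris sequence for sheaf cohomology, and check that the connecting maps force an isomorphism; the formula for the new coding map then falls out of tracking the restriction maps through the gluing. Concretely, I would take $A$ to be a small open neighborhood of the closed edge $\overline{e}$ (containing $v_1$, $v_2$, the open edge $e$, and a short initial segment of every other edge incident to $v_1$ or $v_2$), and $B = X - \{x\}$ for a single interior point $x$ of $e$. Then $A \cup B = X$ and $A \cap B$ deformation retracts onto the disjoint union of the edge-initial-segments around $v_1$ and around $v_2$, minus the point $x$ --- i.e. onto a disjoint collection of half-open intervals, on each of which the flow sheaf is (isomorphic to) a constant sheaf with fiber $\mathbb{F}$. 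The point of choosing $A$ this way is that $A$ itself, with the flow sheaf restricted to it, has the same cohomology as the single vertex $f(e)$ of $Y$ with fiber the incoming data of $v_1$ together with the incoming data of $v_2$ other than along $e$; this is where the piecewise formula for $\phi_i(f(e))$ is read off, by composing the restriction map of $\mathcal{F}$ from the $v_1$-star to the segment along $e$ (which is $\phi_1(v_1)$, since $e$ is the first output of $v_1$) with the coding maps of $v_2$.

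The key steps, in order: (1) Set up $A$, $B$ as above and verify $A\cup B = X$, identify $A\cap B$ up to homotopy with a disjoint union of half-open intervals, and note the flow sheaf is locally constant on each. (2) Compute $H^0(A;\mathcal{F})$ and $H^1(A;\mathcal{F})$ directly from Definition \ref{mflow_df}: because $A$ contains exactly the two vertices $v_1,v_2$ joined by the single edge $e$ and this is a tree, $H^1(A;\mathcal{F}) = 0$ and $H^0(A;\mathcal{F})$ is the stated fiber; in parallel observe $H^1(\text{half-open interval};\mathcal{F})=0$. (3) Write the Mayer--Vietoris sequence
\begin{equation*}
0 \to H^0(X;\mathcal{F}) \to H^0(A;\mathcal{F})\oplus H^0(B;\mathcal{F}) \xrightarrow{d} H^0(A\cap B;\mathcal{F}) \xrightarrow{\delta} H^1(X;\mathcal{F}) \to H^1(A;\mathcal{F})\oplus H^1(B;\mathcal{F}) \to H^1(A\cap B;\mathcal{F}),
\end{equation*}
and use $H^1(A;\mathcal{F}) = H^1(A\cap B;\mathcal{F}) = 0$ to conclude $H^1(X;\mathcal{F}) \cong H^1(B;\mathcal{F})$ modulo the cokernel of $d$. (4) Observe that $B$ is (homotopy equivalent to, and in the relevant sense sheaf-cohomologically equivalent to) $Y$ with one interior point of each of the two half-edges deleted --- in fact $B$ already \emph{is} obtained from $Y$ by splitting the collapsed vertex back into a very short interval --- so arrange the identification $H^*(B;\mathcal{F}) \cong H^*(Y; f_*\mathcal{F})$ by the same kind of small-neighborhood argument (or, cleanly, just define $A,B$ on the $Y$ side so that the whole comparison is a map of Mayer--Vietoris sequences). (5) Chase the commuting ladder between the M--V sequence for $(X; A,B)$ and the one for $(Y; f(A), f(B))$; the vertical maps on the $A\cap B$ and $A$ terms are isomorphisms by construction, so the five lemma gives $H^*(X;\mathcal{F}) \cong H^*(Y;f_*\mathcal{F})$. (6) Finally, read the coding map off the $H^0$ computation in step (2): the value a global-ish section assigns along the $j$-th output of $f(e)$ is either $\phi_{j-1}(v_1)$ of the $v_1$-inputs, when that output came from $v_1$, or $\phi_{j-n}(v_2)$ applied to $(\phi_1(v_1)(\text{$v_1$-inputs}), b_2, \dots, b_p)$ when it came from $v_2$, matching the displayed formula; and the coding maps at untouched vertices are literally unchanged because $f$ restricted to $X - e$ is a homeomorphism, so $f_*\mathcal{F}$ agrees with $\mathcal{F}$ on neighborhoods of those vertices.

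The main obstacle I expect is not the Mayer--Vietoris bookkeeping but the bookkeeping of \emph{which} restriction maps of the flow sheaf appear when $v_1$ has output degree $n>1$ and input degree $m$, and making sure the resulting coding map $\phi_i(f(e))$ is well-defined independent of the size of the neighborhood $A$ and genuinely satisfies the flow-sheaf axioms (in particular that $f_*\mathcal{F}$ is again an $\mathbb{F}$-flow sheaf on $Y$, with the right incoming-edge indexing at $f(e)$). The hypothesis that $e$ has \emph{distinct} endpoints is essential here and is used precisely at step (2): if $v_1 = v_2$ the neighborhood $A$ contains a loop, $H^1(A;\mathcal{F})$ need not vanish, and the M--V argument collapses --- that case is exactly what the separate loop-collapse lemmas handle. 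A secondary subtlety is the placement of $B$'s deleted point and the identification in step (4): one should put the argument entirely on the level of a morphism of covers $(A,B) \to (f(A), f(B))$ so that step (5)'s five-lemma chase is clean, rather than separately proving $H^*(B;\mathcal{F})\cong H^*(Y;f_*\mathcal{F})$, which would duplicate work.
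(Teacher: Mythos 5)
Your overall strategy differs from the paper's: the paper proves this lemma by invoking the Vietoris mapping theorem, reducing everything to the single fiberwise computation $H^{p}(f^{-1}(f(e));\mathcal{F})=0$ for $p>0$ (done with a two-set good \v{C}ech cover of a neighborhood of $\overline{e}$), whereas you unpack that globalization by hand with a Mayer--Vietoris ladder and the five lemma. That is a legitimate alternative in principle, and your step (2) is exactly the paper's key local computation. However, as written your cover has a genuine flaw. With $A$ a neighborhood of $\overline{e}$ and $B=X-\{x\}$ for $x$ an interior point of $e$, the intersection $A\cap B=A-\{x\}$ is \emph{not} a disjoint union of half-open intervals carrying the constant sheaf $\mathbb{F}$: it is the disjoint union of the two vertex stars around $v_1$ and $v_2$, whose spaces of sections are $\mathbb{F}^{m}$ and $\mathbb{F}^{p}$ respectively, since each component still contains a vertex. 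More seriously, the ladder in step (5) does not have isomorphisms where you claim: on the $Y$ side the only admissible second cover element is $B'=Y-\{f(e)\}$, and $f^{-1}(A'\cap B')=A-e$, which is a disjoint union of edge segments and is \emph{not} $A\cap B=A-\{x\}$. The induced map $H^0(A-\{x\};\mathcal{F})\to H^0(A-e;\mathcal{F})$ is generically not an isomorphism (compare $\mathbb{F}^m$ against one copy of $\mathbb{F}$ per segment), so the five-lemma chase fails on that column.

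The fix is to delete the whole closed edge rather than a point: take $B=X-\overline{e}$, which is open, satisfies $A\cup B=X$, and makes $f$ restrict to homeomorphisms $B\to Y-\{f(e)\}$ and $A\cap B\to A'\cap B'$ (both now genuinely disjoint unions of intervals in the interiors of the other incident edges, where the flow sheaf has fiber $\mathbb{F}$). With that change, $H^0(A';f_*\mathcal{F})=\mathcal{F}(f^{-1}(A'))=\mathcal{F}(A)$ holds by definition of the direct image, both $H^1(A)$ and $H^1(A')$ vanish by your step (2) (which is where the hypothesis of distinct endpoints is used, exactly as you say), and the five lemma then delivers $H^*(X;\mathcal{F})\cong H^*(Y;f_*\mathcal{F})$. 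Your step (6), reading off $\phi_i(f(e))$ by composing $\phi_1(v_1)$ with the coding maps at $v_2$, matches the paper's (very terse) verification that $f_*\mathcal{F}$ is again a flow sheaf and is fine.
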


It is of course tempting to wonder if multiple edges between a pair of vertices can be collapsed together, something which would be a bit stronger than Lemma \ref{mcollapse}.  The success of this depends delicately on the coding maps.
In the case of transmission line sheaves, edge collapse works as desired.

\begin{lem}
\label{tlcollapse}
Suppose that $f:X\to Y$ is an edge collapse for an edge $e$ with distinct endpoints, each of which have degree greater than 1.
Then for any transmission line sheaf $\mathcal{G}$ with field coefficients, $f$ induces an isomomrphism on sheaf cohomology and the direct image $f_* \mathcal{G}$ is also a transmission line sheaf.
\end{lem}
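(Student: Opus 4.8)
The plan is to obtain the cohomology isomorphism from the Vietoris mapping theorem quoted above, and the structural assertion from an explicit transfer-matrix computation on the doubled graph. Throughout write $v_1,v_2$ for the (distinct) endpoints of $e$, with $\deg v_1,\deg v_2>1$, and $w=f(e)$.

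For the cohomology statement, note that $f$ is closed and continuous and that its only non-trivial fibre is $f^{-1}(w)=\bar e$, the closure of $e$, a compact arc carrying $v_1$ and $v_2$; every other fibre is a point. By the Vietoris mapping theorem it therefore suffices to show $H^1(\bar e;\mathcal G)=0$ (higher cohomology vanishes since $\bar e$ has covering dimension $1$). This is a finite \v{C}ech computation: cover $\bar e$ by disjoint neighbourhoods $U_{v_1}$, $U_{v_2}$ of its two endpoints and a set $V_e$ in the interior of $e$, so that $H^1(\bar e;\mathcal G)$ is the cokernel of $d^0\colon\mathcal G(U_{v_1})\oplus\mathcal G(U_{v_2})\oplus\mathcal G(V_e)\to\mathcal G(U_{v_1}\cap V_e)\oplus\mathcal G(U_{v_2}\cap V_e)$, and $d^0$ is onto once each restriction $\mathcal G(U_{v_i})\to\mathcal G(U_{v_i}\cap V_e)$ is. In traveling-wave coordinates this last map sends the vertex data $(z_1,\dots,z_{\deg v_i})$ to the pair $(\text{amplitude entering }v_i\text{ along }e,\ \text{amplitude leaving }v_i\text{ along }e)$, whose second slot is the Kirchhoff combination \eqref{kirchoff}; these two linear functionals are independent precisely because $\deg v_i>1$ --- if $\deg v_i=1$ the combination \eqref{kirchoff} reduces to a scalar multiple of the single incoming coordinate, which is exactly how a closed edge picks up a resonance and hence a non-trivial $H^1$. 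So the degree hypothesis is needed at \emph{both} endpoints here, and then Vietoris gives $H^*(X;\mathcal G)\cong H^*(Y;f_*\mathcal G)$.

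For the structural claim, write $\mathcal G=g_*\mathcal F$ with $\mathcal F$ an $\mathbb F$-flow sheaf on the doubled graph $\tilde X$, so $f_*\mathcal G=(f\circ g)_*\mathcal F$. Factor $f\circ g$ as $g'\circ h$, where $g'\colon\tilde Y\to Y$ is the doubling map of $Y$ and $h\colon\tilde X\to\tilde Y$ contracts the two oppositely oriented strands lying over $e$; then $f_*\mathcal G=g'_*(h_*\mathcal F)$, and it remains to see that $h_*\mathcal F$ is a flow sheaf on $\tilde Y$ whose coding maps are again of the Kirchhoff form \eqref{kirchoff}. (Note that $h$ is \emph{not} an edge collapse in the sense of Lemma~\ref{mcollapse} --- it contracts a $2$-cycle and drops the first Betti number --- so one cannot simply iterate Lemma~\ref{mcollapse}.) Over a neighbourhood of $w$, a section of $h_*\mathcal F$ is a flow-sheaf section on a neighbourhood of the contracted $2$-cycle; eliminating the two amplitudes carried by the strands over $e$ amounts to inverting a $2\times 2$ transfer matrix whose determinant is $1-\mu\,\bigl(1-\tfrac{2}{\deg v_1}\bigr)\bigl(1-\tfrac{2}{\deg v_2}\bigr)$, where $\mu$ is the product of the two edge endomorphisms of $e$. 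The degree hypothesis keeps this matrix invertible (for an excitation sheaf $\mathcal E_k$ it is automatic, since $\lvert\mu\rvert\le 1$ while $\lvert 1-\tfrac{2}{\deg v_i}\rvert<1$, and the determinant is even $=1$ whenever either degree is $2$, whereas degree $1$ would instead make $e$ a closed edge). One then substitutes the solved amplitudes back into the codings at $v_1$ and $v_2$, producing a single linear coding at $w$ with all other vertex codings unchanged; the one genuine obstacle is the bookkeeping showing that this composite again has the form \eqref{kirchoff}, with the old edge endomorphisms preserved and a new endomorphism extracted from the transfer matrix --- this is the transmission-line specialisation of the general edge-collapse coding formula of Lemma~\ref{mcollapse}. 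The two parts together give the lemma.
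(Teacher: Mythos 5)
Your proposal follows essentially the same route as the paper: the Vietoris mapping theorem reduces everything to a three-set \v{C}ech computation over the collapsed edge, and the transmission-line structure of $f_*\mathcal{G}$ is verified by eliminating the two strand amplitudes over $e$ and checking that the resulting coding map still has the Kirchhoff form \eqref{kirchoff}. Your surjectivity argument for $d^0$ (via the independence, at each endpoint of degree $>1$, of the incoming projection and the Kirchhoff output functional) is a cleaner way to see the paper's rank claim, and your $2\times 2$ transfer-matrix determinant $1-\mu\bigl(1-\tfrac{2}{\deg v_1}\bigr)\bigl(1-\tfrac{2}{\deg v_2}\bigr)$ reduces to the paper's factor $\tfrac{2(\deg v_1+\deg v_2)-4}{\deg v_1\,\deg v_2}$ under the paper's normalization, in which the two strand endomorphisms of $e$ multiply to the identity.
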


Of course, Lemma \ref{tlcollapse} specifically excludes the case of
collapsing an edge loop, since such an edge does not have distinct
endpoints.  The final edge collapse lemma permits loops to be
collapsed provided they are not resonant.  

\begin{lem}
\label{tlcollapse_nonresonant}
Suppose $\mathcal{F}$ is an $\mathbb{F}$-transmission line sheaf over a graph $X$, and $f:X \to Y$ is a map between graphs that collapses a nonresonant loop.  Then $f$ induces an isomorphism on cohomology.
\end{lem}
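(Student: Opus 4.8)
The plan is to handle the collapse of a nonresonant loop $e$ at a vertex $v$ by a direct \v{C}ech computation over a small good cover adapted to $e$, mirroring the strategy behind Lemma \ref{mcollapse} and Lemma \ref{tlcollapse} but with attention to the fact that $e$ has a single endpoint. First I would set up notation: let $v$ be the vertex carrying the loop $e$, let $L$ be the length of $e$, and write the loop (in the directed cover $Y'$ of $X$ used to build the transmission line sheaf) as a pair of opposing directed edges $a \to$ and $\to a$ through $v$. The edge endomorphism on $e$ is some $\mathbb{F}$-linear map $E$; ``nonresonant'' means $E \neq \mathrm{id}$, equivalently $E - E^{-1} \neq 0$ in the notation of Theorem \ref{general_tcoh_thm}. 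Choose a cover $\{A,B,C\}$ where $A$ is a small connected neighborhood of $v$ meeting every edge incident to $v$ (including both ``ends'' of the loop), $B$ is an open arc in the interior of $e$ disjoint from $A$, and $C$ covers the rest of $X$ away from $e$ (so $C \cong Y - f(e)$ together with collar pieces of the non-loop edges at $v$); arrange $A \cap B$ to be two disjoint arcs (the two ``corners'' where the loop leaves and re-enters $v$), and $A \cap C$ to be the collars on the non-loop edges, with $B \cap C = \emptyset$ and the triple intersection empty.

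Next I would run the Mayer--Vietoris sequence for sheaf cohomology (the theorem stated in Section \ref{sheaf_intro_sec}) for the decomposition $X = (A \cup B) \cup C$, or more directly compute $H^*(X;\mathcal{F})$ from the \v{C}ech complex on $\{A,B,C\}$. The key local input is the description of sections over $A$: by Definition \ref{mflow_df}, a section over $A$ is a tuple of values on the incoming directed edges at $v$ — one value for each non-loop incoming edge, plus the value $z$ flowing \emph{into} $v$ along the loop — and the restriction maps to the two arcs of $A \cap B$ are, on one arc, projection to the loop coordinate (reading the wave leaving $v$ along the loop, which by \eqref{kirchoff} and the edge endomorphism is a specified linear combination of all the incoming values), and on the other arc, the composite that feeds the returned value $E \cdot(\text{that combination})$ back as the loop input. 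The consistency condition for gluing a section of $\mathcal{F}$ over $A$ with one over $B$ therefore forces an equation of the schematic form $(\mathrm{id} - E \cdot M)(\text{stuff}) = 0$ where $M$ is the round-trip coding operator around the loop; since $E \neq \mathrm{id}$ and the coding maps \eqref{kirchoff} are invertible in the relevant sense, $\mathrm{id} - E\cdot M$ is injective on the loop coordinate. This is exactly what kills the extra cohomology the loop would otherwise contribute: the loop coordinate is pinned down by the rest of the data, so the gluing map $\mathcal{F}(A) \oplus \mathcal{F}(B) \oplus \mathcal{F}(C) \to (A\cap B\text{ terms}) \oplus (A\cap C\text{ terms})$ has precisely the kernel and cokernel that reproduce $H^*(Y; f_*\mathcal{F})$.

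I would then identify the resulting complex with the \v{C}ech complex computing $H^*(Y; f_*\mathcal{F})$: the direct image $f_*\mathcal{F}$ assigns to a neighborhood of the image vertex $f(e)$ exactly the subspace of $\mathcal{F}(A)$ cut out by the loop-closure equation above (the loop variable eliminated), with coding maps at $f(e)$ inherited from those at $v$ after this elimination, and is unchanged elsewhere; one checks $f_*\mathcal{F}$ is again a flow sheaf, hence a transmission line sheaf, by verifying the eliminated coding maps still have the form \eqref{kirchoff} (with modified $\mathbb{F}$-endomorphisms on edges). Comparing the two \v{C}ech complexes degree by degree, the map induced by $f$ is an isomorphism because over $C$ and over the interiors nothing changes, and over $A$ the surjection $\mathcal{F}(A) \to (f_*\mathcal{F})(f(A))$ has kernel exactly complementary to the image of the restriction to $A \cap B$. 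The main obstacle I anticipate is the bookkeeping in this last identification — keeping the orientation signs in the \v{C}ech differential straight while simultaneously eliminating the loop variable and showing the surviving maps still satisfy \eqref{kirchoff} — together with the verification that $\mathrm{id} - E\cdot M$ really is invertible (not merely injective) on the one-dimensional loop factor, which is where nonresonance ($E \ne E^{-1}$, so $E \ne \mathrm{id}$) is used in an essential way; the rest is a routine application of Mayer--Vietoris as in the proofs of Lemmas \ref{mcollapse} and \ref{tlcollapse}.
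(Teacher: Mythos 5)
Your proposal follows essentially the same route as the paper: a two-set Mayer--Vietoris/\v{C}ech computation on a small cover of a neighborhood of the loop (one set around the vertex, one in the loop's interior, meeting in two arcs), showing that nonresonance pins down the loop coordinate and kills the extra $H^1$, and then globalizing --- the paper does the globalization by citing the Vietoris mapping theorem, while you unwind the same mechanism by comparing \v{C}ech complexes directly, which is equivalent here. Two caveats. First, your parenthetical ``nonresonant means $E\neq\mathrm{id}$, equivalently $E-E^{-1}\neq 0$'' is not an equivalence: $E-E^{-1}=0$ is the paper's resonance condition for \emph{closed edges} and is satisfied by $E=-\mathrm{id}$, which is nevertheless a nonresonant loop; the paper's explicit kernel computation (the relation $0=(2-L-L^{-1})b+(L-1)(c_3+\dots+c_k)$, whose coefficients vanish exactly when $(L-1)^2=0$) confirms that the correct dividing line for loops is precisely $L\neq\mathrm{id}$, which is what your argument actually uses, so this slip is harmless. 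Second, your claim that $f_*\mathcal{F}$ is again a flow/transmission line sheaf is neither needed for the stated conclusion nor asserted by the paper, and such claims require care under collapses (compare Remark \ref{closed_rem1}, where a collapse preserves cohomology but the direct image generically fails to be a transmission line sheaf); you should drop it or prove it separately rather than fold it into this lemma.
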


With these lemmas stated (we prove them in Section
\ref{collapse_sec}), we now address the problem of proving Theorem
\ref{general_tcoh_thm}.  

\begin{figure}
\begin{center}
\includegraphics[width=3in]{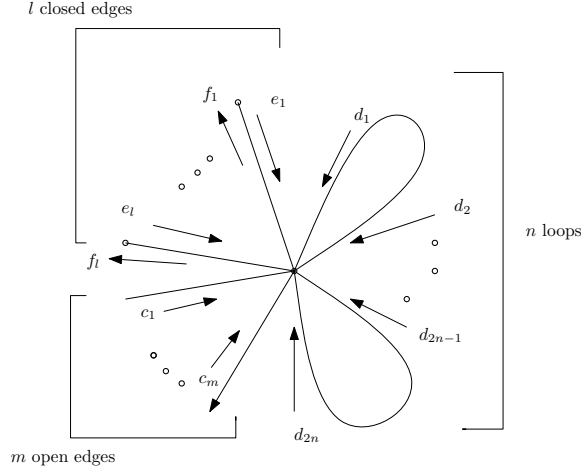}
\caption{A graph with one vertex, $l$ closed edges, $m$ open edges, and $n$ loops}
\label{tlcoh_1_fig}
\end{center}
\end{figure}

\begin{figure}
\begin{center}
\includegraphics[width=3in]{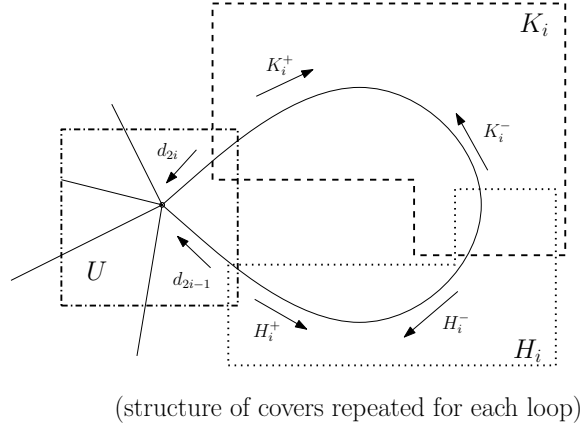}
\caption{Cover over a particular loop in the graph}
\label{tlcoh_2_fig}
\end{center}
\end{figure}

\begin{proof} (of Theorem \ref{general_tcoh_thm})
Begin by collapsing out all nonresonant loops using Lemma
\ref{tlcollapse_nonresonant}.  Then, obtain a minimal spanning tree
for $X$, and collapse each edge in the tree using Lemma
\ref{tlcollapse}.  Thus the sheaf cohomology of $\mathcal{G}$ can be
computed by computing the sheaf cohomology on a graph $Y$ with a single
vertex, $l$ closed edges, $m$ open edges, and $n$ loops as shown in
Figure \ref{tlcoh_1_fig}.

Choose a good cover for this space by selecting $U$ as a
contractible open set containing the single vertex of $Y$.  To each
loop $j$, associate two additional open sets $K_j$, $H_j$ homeomorphic to
intervals, which complete the cover as shown in Figure \ref{tlcoh_2_fig}.  

Note then that the \v{C}ech
cochain complex is 
\begin{equation}
\begin{CD}
0 @>>> \mathbb{F}^{m+2n} \oplus \mathbb{F}^{4n} \oplus \mathbb{F}^{2l} @>\delta >>
\mathbb{F}^{6n} \oplus \mathbb{F}^{2l} @>>> 0.
\end{CD}
\end{equation}
For convenience, let us consider the case where $l=0$, and let $d=m+2n$, which is the degree of the single
vertex of $Y$.  Organize the coboundary map $\delta$ so that it has
the following block form (columns are $c_1,...,c_m,d_1,d_2,H_1^+,H_1^-,K_1^+,K_1^-,...$):
\begin{equation*}
\begin{pmatrix}
D_{6 \times m}&B_{6 \times 6}&A_{6 \times 6}&...&A_{6 \times 6}\\
D_{6 \times m}&A_{6 \times 6}&B_{6 \times 6}&...&A_{6
  \times 6}\\
D_{6 \times m}&A_{6 \times 6}&A_{6 \times 6}&...&A_{6
  \times 6}\\
 &&&...&&\\
D_{6 \times m}&A_{6 \times 6}&A_{6 \times
  6}&...&B_{6 \times 6}\\
\end{pmatrix}
\end{equation*}
where
\begin{equation*}
A_{6\times 6}=\begin{pmatrix}
0&0&0&0&0&0\\
\frac{2}{d}&\frac{2}{d}&0&0&0&0\\
\frac{2}{d}&\frac{2}{d}&0&0&0&0\\
0&0&0&0&0&0\\
0&0&0&0&0&0\\
0&0&0&0&0&0\\
\end{pmatrix},
\end{equation*}
\begin{equation*}
B_{6\times 6}=\begin{pmatrix}
1&0&-1&0&0&0\\
\frac{2-d}{d}&\frac{2}{d}&0&-1&0&0\\
\frac{2}{d}&\frac{2-d}{d}&0&0&-1&0\\
0&1&0&-1&0&0\\
0&0&1&0&-1&0\\
0&0&0&1&0&-1\\
\end{pmatrix},
\end{equation*}
\begin{equation*}
D_{6\times m}=\begin{pmatrix}
0&0&...&0\\
\frac{2}{d}&\frac{2}{d}&...&\frac{2}{d}\\
\frac{2}{d}&\frac{2}{d}&...&\frac{2}{d}\\
0&0&...&0\\
0&0&...&0\\
0&0&...&0\\
\end{pmatrix}.
\end{equation*}
We can do row-reduction on each $B_{6 \times 6}$ block individually,
and in such a way that this preserves all other entries not in this
block (in particular, the nonzero rows remain unchanged outside the
block).  We obtain a new block 
\begin{equation*}
B'_{6 \times 6}=\begin{pmatrix}
1&0&-1&0&0&0\\
\frac{2-d}{d}&\frac{2-d}{d}&0&0&0&0\\
\frac{2-d}{d}&\frac{2-d}{d}&0&0&0&0\\
0&1&0&0&0&-1\\
0&0&1&0&-1&0\\
0&0&0&1&0&-1\\
\end{pmatrix},
\end{equation*}
which has rank 4 if $d=2$ and rank 5 otherwise.  Now observe that for
the nonzero entries of the second rows of each $B'$ block.  They are
\begin{equation*}
\begin{matrix}
\frac{2-d}{d}&\frac{2}{d}&\frac{2}{d}&...&\frac{2}{d}\\
\frac{2}{d}&\frac{2-d}{d}&\frac{2}{d}&...&\frac{2}{d}\\
\frac{2}{d}&\frac{2}{d}&\frac{2-d}{d}&...&\frac{2}{d}\\
&&...&&\\
\frac{2}{d}&\frac{2}{d}&\frac{2}{d}&...&\frac{2-d}{d}\\
\end{matrix}.
\end{equation*}
In each column, there are $n-1$ copies of $2/d$ and one copy of
$\frac{2-d}{d}$, so each column sums to $\frac{2n}{d}-1$.  Now when
$n=\frac{d}{2}=\frac{m+2n}{2}$, ie. $m=0$, this implies that the
coboundary matrix has an additional kernel element, and clearly at
most one such element.  On the other hannd, if $n \not= d/2$, clearly
all of these second rows of each $B'$ block are linearly independent.
Hence 
\begin{equation*}
\text{rank } \delta = \begin{cases}
4 & \text{if }d=2\\
5n & \text{if }m \not= 0\\
5n-1 & \text{if }m=0\\
\end{cases}.
\end{equation*}

Now if $l>0$, so there are closed edges present, the coboundary map
must be augmented with $2l$ rows and columns.  Let $d=2n+m+l$.  For the $i$-th closed edge,
we add two new rows, which look like
\begin{equation*}
\begin{matrix}
0&0&...&1&-E_i^{-1}&...&0&...m\text{ copies}...&0&0&...2n\text{ copies}...&0\\
\frac{2}{d}&0&...&\frac{2-d}{d}&E_i&...&\frac{2}{d}&...m\text{ copies}...&\frac{2}{d}&\frac{2}{d}&...2n\text{ copies}...&\frac{2}{d}\\
\end{matrix},
\end{equation*}
in which the first $2l$ columns are added, the
next $m$ columns are the first $m$ columns of the original $\delta$,
and the remaining columns count off in pairs from the first two
columns of each $B_{6 \times 6}$ block.  More precisely, they
correspond to the columns labelled $e_1, f_1,..., e_i, f_i, ..., e_l, f_l, 
c_1, ..., c_m, d_1, ..., d_{2n}$.  After one row operation, this becomes
\begin{equation*}
\begin{matrix}
0&0&...&1&-E_i^{-1}&...&0&...m\text{ copies}...&0&0&...2n\text{ copies}...&0\\
\frac{2}{d}&0&...&\frac{2}{d}&E_i-E_i^{-1}&...&\frac{2}{d}&...m\text{ copies}...&\frac{2}{d}&\frac{2}{d}&...2n\text{ copies}...&\frac{2}{d}\\
\end{matrix}.
\end{equation*}
Note that we thereby obtain a duplicate copy of the second row for
each closed edge, so at most 1 is contributed to the rank by these
rows.  One the other hand, the first row is clearly linearly
independent from all the others.  Hence the rank of the coboundary map
is increased by $\max \left\{ 2l, 2l-l'+1 \right\}$. 
\end{proof}

\subsubsection{Proofs of the edge collapse lemmas}
\label{collapse_sec}

We begin by addressing the most general edge collapse result, Lemma
\ref{mcollapse}.  The central difficulty is that the edge
endomorphisms and coding maps are not specified with a particular
form.  This complicates the calculations somewhat.

\begin{proof} (of Lemma \ref{mcollapse})
We aim to employ the Vietoris mapping theorem to obtain the desired isomorphism on cohomology.
To this end, observe that since $f$ is an edge collapse, it follows that it is a closed surjection.
Additionally, $X$ and $Y$ are both paracompact, so $f^{-1}$ is always taut.
Suppose that $y\in Y$, and discern two cases:
\begin{enumerate}
\item That $y$ is not $f(e)$, in which case $f^{-1}$ is exactly one point, so $H^p(f^{-1}(y);\mathcal{F})=0$ for $p>0$.
\item If $y=f(e)$, observe that $H^p(e;\mathcal{F})=\varinjlim H^p(U_\alpha;\mathcal{F})$, where $U_\alpha$ ranges over open sets
containing $e$.  We consider a good cover of $U_\alpha$ that consists of $V_1$ (containing the vertex $v_1$) and $V_2$ (containing $v_2$) whose intersection lies in the interior of $e$.
The \v{C}ech complex is then
\begin{equation}
\begin{CD}
0 @>>> M^m \oplus M^p @> \delta >> M @>>> 0
\end{CD}
\end{equation}
In which the coboundary map $\delta$ is given by $(a_1,a_2,...a_m,b_1,b_2,...b_p) \mapsto \phi_1(v_1)(a_1,a_2,...a_m) - b_1$.
Since $\phi_1(v_1)$ is a homomorphism, it's clear that the image of $\delta$ is $\mathbb{F}$. 
Hence, $H^p(e;\mathcal{F})=0$ for $p>0$.
\end{enumerate}

For the second statement, observe that the only thing to check is that the stalk over $f(e)$ has the correct rank.
In this case, that rank is $m+p-1$, which agrees with Definition \ref{mflow_df}.
\end{proof}

\begin{rem}
The formula for the coding map at the collapsed vertex $f(e)$ can be written in terms of
matrices (with entries in $\mathbb{F}$).  Suppose $A$ is a matrix for $\phi(v_1)$ and $B$ is a matrix for $\phi(v_2)$.
Let $u^T$ be the first row of $A$, which corresponds to the output of $v_1$ along the edge $e$.
Likewise, let $v$ be the first column of $B$, which corresponds to the input to $v_2$ coming from $e$.
Let $a$ and $b$ be the matrices obtained by deleting the first row of $A$ and first column of $B$, respectively.
Then $\phi(f(e))$ has block matrix form
\begin{equation}
\phi(f(e))=\begin{pmatrix}
a&0\\vu^T&b\\
\end{pmatrix}.
\end{equation}
\end{rem}

\begin{cor}
If $\mathcal{F}$ is an $\mathbb{F}$-flow sheaf over a finite, connected graph, its sheaf cohomology can be computed by looking at the direct image under the collapse of a spanning tree.
\end{cor}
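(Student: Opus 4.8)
The plan is to derive the corollary by iterating Lemma \ref{mcollapse} over the edges of a spanning tree, using that the direct image construction and the induced cohomology isomorphisms both compose. First I would fix a spanning tree $T$ of the finite connected graph $X$ (which exists by connectedness and finiteness) and enumerate its edges $e_1,\dots,e_k$. I would then build a sequence of edge collapses $f_i:X_{i-1}\to X_i$, starting from $X_0=X$ and $\mathcal{F}_0=\mathcal{F}$, where $X_i$ is obtained from $X_{i-1}$ by collapsing the image $\bar e_i$ of $e_i$, and $\mathcal{F}_i=(f_i)_*\mathcal{F}_{i-1}$.

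The only place a genuine argument is needed is checking that at each stage $\bar e_i$ has \emph{distinct} endpoints in $X_{i-1}$, so that Lemma \ref{mcollapse} applies. This follows from the elementary fact that contracting an edge of a tree again yields a tree: the images of $e_i,\dots,e_k$ in $X_{i-1}$ form a spanning tree of $X_{i-1}$, so $\bar e_i$ lies on no cycle and in particular is not a loop. (Equivalently, a tree contains no multi-edge, so collapses among tree edges can never create a loop.) Granting this, Lemma \ref{mcollapse} yields, for each $i$, that $\mathcal{F}_i$ is again an $\mathbb{F}$-flow sheaf on $X_i$ and that $f_i$ induces an isomorphism $H^*(X_{i-1};\mathcal{F}_{i-1})\cong H^*(X_i;\mathcal{F}_i)$.

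Finally I would compose the maps. Set $Y=X_k$ and $f=f_k\circ\cdots\circ f_1:X\to Y$; since $T$ is spanning, $Y$ has exactly one vertex, the non-tree edges of $X$ having become loops at it. Because direct images compose, $f_*\mathcal{F}=(f_k)_*\cdots(f_1)_*\mathcal{F}=\mathcal{F}_k$ is an $\mathbb{F}$-flow sheaf on $Y$, and the chain of isomorphisms above composes to $H^*(X;\mathcal{F})\cong H^*(Y;f_*\mathcal{F})$, which is the asserted statement. The argument is essentially bookkeeping; the one point deserving care — and the closest thing to an obstacle — is exactly the loop-avoidance observation of the previous paragraph, which guarantees the ``distinct endpoints'' hypothesis of Lemma \ref{mcollapse} is never violated along the way.
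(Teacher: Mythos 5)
Your proposal is correct and is essentially the argument the paper intends: the corollary is left as an immediate iteration of Lemma \ref{mcollapse} over the edges of a spanning tree, with direct images and the induced cohomology isomorphisms composing. Your explicit check that contracting a tree edge yields a tree (so the ``distinct endpoints'' hypothesis is never violated) is exactly the one detail worth recording, and it is right.
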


For the proof of Lemma \ref{tlcollapse}, in addition to verifying that
the Vietoris mapping theorem still holds, we must also verify that the
direct image is still a transmission line sheaf.  This requires a
straightforward, but lengthy, computation.

\begin{proof}(of Lemma \ref{tlcollapse})
We need only redo the case of $H^*(f^{-1}(f(e));\mathcal{F})$ in Lemma
\ref{mcollapse}.  In this case, look at a good cover of the edge $e$,
which consists of three sets $\{U_1,U_2,U_3\}$.  Let $U_1$ contain one
endpoint of $e$ (degree $m$), $U_3$ contain the other (degree $n$),
and $U_2$ lie entirely within the interior of $e$.  We'll assume that the $e$ has an edge endomorphism $L$. 

In the \v{C}ech cochain complex, the coboundary map has the form
\begin{equation}
\label{cbndry_tlcollapse}
\begin{pmatrix}
1&0&...&0&0&0&...&0&-L^{-1}&0\\
\frac{2-m}{m}&\frac{2}{m} &...& \frac{2}{m}&0&0&...&0&0&-1\\
0&0&...&0&\frac{2-n}{n}&\frac{2}{n}&...&\frac{2}{n}&-1&0\\
0&0&...&0&1&0&...&0&0&-L\\
\end{pmatrix},
\end{equation}
which we claim has rank 2.  (The columns are organized by $\mathcal{G}(U_1)\oplus \mathcal{G}(U_3)\oplus \mathcal{G}(U_2)$.) Hence the $H^1(e,\mathcal{G})=0$.

We now address the claim by examining the kernel of the coboundary map.  By a pair of linear combinations of rows of \eqref{cbndry_tlcollapse}, we obtain
\begin{equation*}
d_0=L\left(\frac{2}{m}\sum_{j=0}^{m-1}c_j-c_0\right),
\end{equation*}
and
\begin{equation*}
c_0=L^{-1}\left(\frac{2}{n}\sum_{j=0}^{n-1}d_j-d_0\right).
\end{equation*}
Then we can solve for $d_0$ by substitution:
\begin{eqnarray*}
d_0&=&L\left(\frac{2}{m}\sum_{j=1}^{m-1}c_j-\left(\frac{2}{m}-1\right)c_0\right)\\
&=&L\frac{2}{m}\sum_{j=1}^{m-1}c_j-\left(\frac{2}{m}-1\right)\left(\frac{2}{n}\sum_{j=0}^{n-1}d_j-d_0\right)\\
&=&L\frac{2}{m}\sum_{j=1}^{m-1}c_j-\frac{2}{n}\left(\frac{2}{m}-1\right)\sum_{j=1}^{n-1}d_j-\left(\frac{2}{m}-1\right)\left(\frac{2}{n}-1\right)d_0\\
\end{eqnarray*}
\begin{eqnarray*}
\left(1-\left(\frac{2}{m}-1\right)\left(\frac{2}{n}-1\right)\right)d_0&=&\frac{2L}{m}\sum_{j=1}^{m-1}c_j+\frac{2}{n}\left(\frac{2-m}{m}\right)\sum_{j=1}^{n-1}d_j\\
\left(\frac{2m+2n-4}{mn}\right)d_0&=&\frac{2L}{m}\sum_{j=1}^{m-1}c_j+\frac{2}{n}\left(\frac{2-m}{m}\right)\sum_{j=1}^{n-1}d_j,\\
\end{eqnarray*}
whence
\begin{equation}
\label{d0_tlcollapse_coding}
d_0=\frac{nL}{m+n-2}\sum_{j=1}^{m-1}c_j+\frac{2-m}{m+n-2}\sum_{j=1}^{n-1}d_j.
\end{equation}
In particular, this confirms that the rank of \eqref{cbndry_tlcollapse} is 2, since the kernel is of dimension $m+n-2$.  

Continuing with \eqref{d0_tlcollapse_coding}, we show that the direct image is a transmission line sheaf by exhibiting a typical restriction map to an output of one of the edges.  Without loss of generality, we consider the output along the $i$-th edge of $v_2$, namely
\begin{eqnarray*}
d_i'&=&\frac{2}{n}\sum_{j=0}^{n-1}d_j-d_i\\
&=&\frac{2L}{m+n-2}\sum_{j=1}^{m-1}c_j+\frac{2}{n}\left(\frac{2-m}{m+n-2}\right)\sum_{j=1}^{n-1}d_j+\frac{2}{n}\sum_{j=1}^{n-1}d_j-d_i\\
&=&\frac{2}{m+n-2}\left(\sum_{j=1}^{m-1}Lc_j+\sum_{j=1}^{n-1}d_j\right)-d_i,
\end{eqnarray*}
which is of the form required for a transmission line sheaf.
\end{proof}

\begin{figure}
\begin{center}
\includegraphics[width=3in]{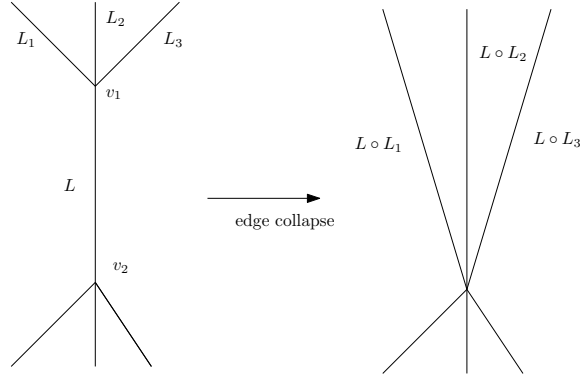}
\caption{Edge collapse results in edge endomorphisms being composed}
\label{tlcollapse_composition}
\end{center}
\end{figure}

\begin{rem}
Notice that when the edge $e$ is collapsed using Lemma \ref{tlcollapse}, the edge endomorphisms on one side are composed (see Figure \ref{tlcollapse_composition}).  This composition is non-unique: in the proof, we composed $L$ with the edges on the side of $v_1$, but we could have composed by $L^{-1}$ on the side of $v_2$.  This clearly results in quasi-isomorphic sheaves, as they will agree in cohomology, and this difference will not concern our discussion here.  
\end{rem}

\begin{rem}
\label{closed_rem1}
We note that a degree 1 vertex causes the proof of Lemma \ref{tlcollapse} to fail because the direct image is generally not a transmission line sheaf.  The Vietoris Mapping theorem applies perfectly well in this case, but the coding maps do in fact change.  It is true that $H^1(e)$ is still trivial, even if there is a nonidentity edge endomorphism.

Suppose for instance that $m=1$, and that we wish to compute the restriction to the output of the $i$-th edge incident to $v_2$.  This has the value
\begin{equation*}
d_i'=\frac{2}{n}\sum_{j=0}^{n-1}d_j-d_i.
\end{equation*}
If it the collapse of the edge resulted in a transmission line sheaf, then we should have 
\begin{equation*}
d_i'=\frac{2}{n-1}\sum_{j=1}^{n-1}d_j-d_i
\end{equation*}
upon eliminating $d_0$ using the conditions at $v_1$.  However, what we instead obtain is
\begin{equation*}
d_i'=\frac{2}{n}\left(\frac{L+L^{-1}}{(2/n-1)L+L^{-1}}\right)\sum_{j=1}^{n-1}d_j-d_i,
\end{equation*}
where $L$ is the edge endomorphism for $e$.  Although certain values for $L$ will result in a transmission line sheaf, generic values of $L$ will not.

An interpretation of this result is that the degree 1 vertex's influence is to adjust the coding maps, essentially ``tuning'' the transmission line.  In the case where a transmission line sheaf results from an edge collapse of a closed edge, the edge is the correct length to have no effect at all, which is related to resonance phenomena.
\end{rem}

Finally, we address the case of Lemma \ref{tlcollapse_nonresonant}.
Rather than using the \v{C}ech approach as in the earlier
calculations, we instead use a Mayer-Vietoris sequence, to illustrate
an alternate technique for cohomology computation.  This has the
advantage of requiring fewer dimensions, and is a fairly natural
context to consider the computation of sheaf cohomology over graphs.

\begin{figure}
\begin{center}
\includegraphics[width=3in]{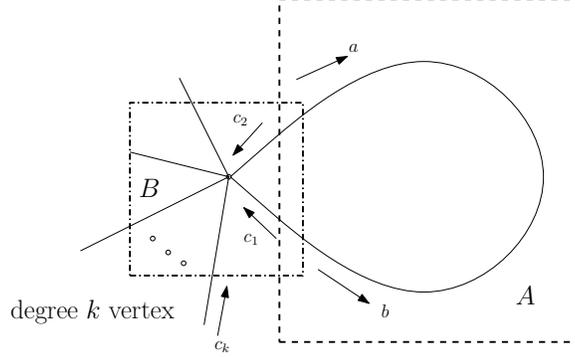}
\caption{Cover of a loop in the graph, for use with the Mayer-Vietoris sequence}
\label{tlcollapse_nonresonant_fig}
\end{center}
\end{figure}

\begin{proof} (of Lemma \ref{tlcollapse_nonresonant})
Let $U$ be a connected open set covering the loop to be collapsed, and contain exactly one vertex of degree $k$.  We form a cover of $U$ by two open sets $A$ and $B$.  Let $A$ be a connected open set contained in the interior of the edge, and $B$ be a connected open set containing the unique vertex in $U$ (see Figure \ref{tlcollapse_nonresonant_fig}).  Observe that $\mathcal{F}(A) \cong \mathbb{F}^2$ and $\mathcal{F}(B) \cong \mathbb{F}^k$.  The appropriate Mayer-Vietoris sequence is 
\begin{equation*}
\begin{CD}
0 \to H^0(U;\mathcal{F}) \to H^0(A;\mathcal{F}) \oplus H^0(B;\mathcal{F}) @>\Delta>> H^0(A \cap B;\mathcal{F}) @>\delta>> H^1(U;\mathcal{F}) \to 0,
\end{CD}
\end{equation*}
where we observe that $H^0(A \cap B;\mathcal{F}) \cong \mathbb{F}^4$.  As a result of this exact sequence, $H^0(U;\mathcal{F}) \cong \ker \Delta$, and $H^1(U;\mathcal{F}) \cong \text{image } \delta$.  Observe that

\begin{eqnarray*}
\text{dim ker }\delta &=& 4 - (k+2) + \text{dim ker }\Delta\\
&=&2-k+\text{dim }H^0(U;\mathcal{F}).\\
\end{eqnarray*}
Suppose that the edge endomorphism for the edge to be collapsed is $L:\mathbb{F} \to \mathbb{F}$.  Then 
\begin{equation*}
\Delta \left(\begin{pmatrix}a\\b\end{pmatrix},\begin{pmatrix}c_1\\...\\c_k\end{pmatrix}\right)=\begin{pmatrix}c_2-La\\c_1-L^{-1}b\\a-\frac{2}{k}(c_1+...+c_k)+c_1\\b-\frac{2}{k}(c_1+...+c_k)+c_2\end{pmatrix},
\end{equation*}
using Kirchoff conditions at the vertex.  Performing some algebraic manipulations, we find that elements of the kernel of $\Delta$ satisfy (provided $k\not= 2$)
\begin{equation*}
a=-\left(1-\frac{2}{k}L\right)^{-1}\left(1-\frac{2}{k}\right)L^{-1}b+\left(1-\frac{2}{k}L\right)^{-1}\frac{2}{k}(c_3+...+c_k)
\end{equation*}
and
\begin{equation*}
0=\left(1-\frac{2}{k}L^{-1}\right)b+\left(1-\frac{2}{k}\right)La-\frac{2}{k}\left(c_3+...+c_k\right).
\end{equation*}
If $k\not= 2$, then this leads to
\begin{equation*}
0=(2-L-L^{-1})b+(L-1)(c_3+...+c_k),
\end{equation*}
which means that the dimension of the kernel of $\Delta$ is $k-2$ if $L \not= 1$ and $k-1$ if $L=1$.

If instead, $k=2$, then we obtain
\begin{equation*}
a=La,\;b=L^{-1}b,
\end{equation*}
which implies that the dimension of the kernel of $\Delta$ is 0 if $L \not= 1$ and 2 otherwise.

Thus we have
\begin{equation*}
H^0(U;\mathcal{F}) \cong \begin{cases}
\mathbb{F}^{k-1} & \text{if } L=1 \text{ and }k \not= 2\\
\mathbb{F}^{k-2} & \text{if } L\not= 1 \text{ and }k \not= 2\\
\mathbb{F}^2 & \text{if } L=1 \text{ and }k = 2\\
0 & \text{if } L\not= 1 \text{ and }k = 2\\
\end{cases}
\end{equation*}
and
\begin{equation*}
H^1(U;\mathcal{F}) \cong \begin{cases}
\mathbb{F} & \text{if } L=1 \text{ and }k \not= 2\\
0 & \text{if } L\not= 1 \text{ and }k \not= 2\\
\mathbb{F} & \text{if } L=1 \text{ and }k = 2\\
0 & \text{if } L\not= 1 \text{ and }k = 2.\\
\end{cases}
\end{equation*}
The conclusion is that the Vietoris Mapping theorem applies for the loop to be collapsed if and only if it is nonresonant, that is, if $L\not= 1$.
\end{proof}

\subsection{Geometry extraction algorithm}
\label{geometry_extract_sec}

If the topology is known, then generically a single section of the
excitation sheaf contains all of the geometric information.  This
result is stronger than the results obtained by previous authors (like
Proposition \ref{topology_obstruct}) which requires knowledge of many
sections.  However, the sheaf-theoretic framework provides a local,
iterative mechanism for describing the geometric information in a
quantum graph.  The central idea is that in collapsing a spanning tree
in the graph, the cohomology (and hence the sections) of flow sheaves
is unchanged.  Under such a collapse, however, the global sections are
very easily tied to the metric structure of the graph.  By
sequentially ``undoing'' the edge collapses, edge endomorphisms are
determined one at a time until all are determined.

Operationally, enough information can be obtained by placing a
directional sensor at each vertex with degree not equal to 2.  Each
such sensor detects the incoming wave amplitude along each incident
edge.  Since the algorithm measures phase differences between points
in the graph, this requires that the sensors be synchronized, and take
their measurements simultaneously.  From a practical point of view,
loss in the graph (which we have neglected in this section) limits the
visibility of the signal sources.  As such, it is probably unecessary
to require synchronization over all sensors placed in a lossy graph.    

\begin{thm}
\label{geometry_extract_thm}
Suppose that the edge endomorphisms of a transmission line sheaf
$\mathcal{F}$ are algebraically independent (in the ring of
$\mathbb{F}$-endomorphisms).  In the case of quantum graphs,
this is equivalent to requiring that the edge lengths are
algebraically independent.  Then the edge endomorphisms are determined by any nonzero section of $\mathcal{F}$ when Kirchoff conditions \eqref{kirchoff} are used.
\end{thm}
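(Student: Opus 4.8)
The plan is to exploit the edge-collapse results of Section~\ref{collapse_sec}, which preserve $H^0$, in two passes: a forward pass that reduces $X$ to a single-vertex graph, and a backward pass that recovers one edge endomorphism per un-collapse. Fix a nonzero global section $u$; concretely $u$ records the two traveling-wave amplitudes along each edge, which is precisely the data returned by the directional sensors placed at the vertices of degree $\ne 2$. Since the edge endomorphisms are algebraically independent, none of them equals $1$ or $-1$; hence every loop and every closed edge of $X$ is nonresonant, so Lemmas~\ref{mcollapse}, \ref{tlcollapse}, and~\ref{tlcollapse_nonresonant} are all available. Pick a spanning tree $T$, and collapse in turn all of the tree edges (via Lemma~\ref{tlcollapse}, ordering the collapses so that one always contracts an edge both of whose endpoints currently have degree $>1$, and invoking Lemma~\ref{mcollapse} / Remark~\ref{closed_rem1} for the bookkeeping of coding maps where a degree-one endpoint forces the issue) and then all of the nonresonant loops (via Lemma~\ref{tlcollapse_nonresonant}). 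We obtain a chain of edge collapses $X = X_0 \xrightarrow{f_1} X_1 \xrightarrow{f_2}\cdots\xrightarrow{f_r} X_r = \bar X$ in which $\bar X$ has a single vertex. Each $f_i$ induces an isomorphism on $H^0$, so $u$ pushes forward to a nonzero section $u_i$ on each $X_i$, and, crucially, the coding map at the vertex created by $f_i$ is the explicit composite of the two endpoint coding maps recorded in Lemma~\ref{mcollapse} (for a loop, as in the proof of Lemma~\ref{tlcollapse_nonresonant}); that composite is affine in $E_{e_i}$, with all other entries built from edges contracted only later in the chain.

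On $\bar X$ the only coding-map relations occur at the single vertex and are completely explicit; by Theorem~\ref{general_tcoh_thm} and its proof, $u_r$ is determined by a handful of free amplitudes, which settles the base case (the only endomorphisms still implicit in $\bar X$ are those of its closed edges, fixed by the vertex relation up to the $E\leftrightarrow E^{-1}$ phase ambiguity of Remark~\ref{closed_rem1}). With this in hand I would run the induction backward: suppose the endomorphisms of every edge of $X$ contracted strictly after stage $i$ have been recovered; to recover $E_{e_i}$, impose the gluing condition that $u_{i-1}$ must satisfy at the vertex $f_i(e_i)$ of $X_i$, namely that $u_{i-1}$ around that vertex equals the composite coding map applied to the incoming amplitudes. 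Because that composite is affine in $E_{e_i}$ with all remaining data known (the incoming amplitudes are measured components of $u$, and the other endomorphisms are already recovered), this is a single scalar equation $\lambda\,E_{e_i}=\mu$ with $\lambda,\mu$ explicit; when $\lambda\ne 0$ it determines $E_{e_i}$, and we pass to $e_{i-1}$. After $r$ steps every tree-edge and loop endomorphism is known, and the non-tree edge endomorphisms are then read off as the appropriate monomials, completing the recovery — and yielding the explicit inductive algorithm promised in the text.

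The main obstacle, and the only place the hypothesis is genuinely used, is showing that at each stage the coefficient $\lambda$ is nonzero for \emph{every} nonzero global section. Here I would argue that $\lambda$ is, up to a harmless nonzero scalar, a fixed partial sum of components of $u$ on the relevant star of edges, and that the \v{C}ech description of $H^0$ from the proof of Theorem~\ref{general_tcoh_thm} expresses every such component as a rational function of the free amplitudes of $u$ and of the edge endomorphisms; one then checks that $\lambda$ is not the identically-zero rational function (for instance by a leading-monomial argument in $E_{e_i}$, or by specializing to a configuration where the component is visibly nonzero). Were $\lambda$ to vanish at the true endomorphisms for some nonzero $u$, the defining relations of $H^0$ would translate this into a nontrivial polynomial identity among the $E_e$, contradicting algebraic independence; equivalently, one uses the freedom to extract the equation for $E_{e_i}$ at a different vertex or a different component, and algebraic independence guarantees that some such choice is non-degenerate. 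A minor bookkeeping point is that open edges carry no endomorphism and their amplitudes are true free parameters of $u$; these are simply carried along, untouched, through all the collapses and contribute nothing to solve for. Assembling the forward reduction, the transparent base case, and the backward recovery establishes the theorem.
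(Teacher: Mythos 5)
Your proposal follows essentially the same route as the paper's proof: collapse a spanning tree via Lemma \ref{tlcollapse}, solve the single-vertex (bouquet) case directly from the Kirchoff relations, and then recover one edge endomorphism per un-collapsed tree edge by a backward induction using $Lb=d_0$, $c_0=L^{-1}a$ together with \eqref{geo_eqn}. The only real divergences are that you additionally collapse the nonresonant loops (the paper instead reads the loop endomorphisms off directly at the bouquet stage from the $n$ vertex relations), and that you explicitly flag the nondegeneracy issue --- whether the coefficient $\lambda$ multiplying $E_{e_i}$ can vanish --- which the paper's proof passes over in silence. That worry is legitimate, but note that $\lambda$ is (up to a nonzero scalar) the outgoing amplitude of the particular section $u$ along the edge being un-collapsed, which is a property of the section and not only of the endomorphisms: it can vanish for special nonzero sections (e.g.\ a totally reflected configuration with $d_1=d_2=0$ and $c_1+\dots+c_m=0$ at a vertex) no matter how algebraically independent the $E_e$ are, so your proposed fix via algebraic independence alone does not close the gap; it is, however, a gap inherited from, and present in, the paper's own argument, which would need ``any nonzero section'' weakened to ``a generic section.''
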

As might be expected, the proof of this result is more interesting than its statement, and proceeds by an inductive computation.

\begin{proof}
{\bf Base case:} We assume that $X$ consists of a boquet of circles,
namely that there exists a single vertex $v$ in $X$; we determine all
of the edge auomorphisms in $\mathcal{F}$ from $s$.  For concreteness,
assume that there are $n$ closed loops and $m$ open edges in $X$.  Consider a connected open set $U$ which contains
the unique vertex and none of loops completely.  Then $\mathcal{F}(U)$
will be isomorphic to $\mathbb{F}^{2n+m}$, that is there are $2n+m$
incoming signals entering the vertex.  Some of these, of course, will
be related if we consider $\mathcal{F}(X)$.  Suppose without loss of
generality, that $d_1, d_2, ..., d_{2n-1}, d_{2n}$ are the values of the
section $s$ on the loops at $v$, and that $c_1, ..., c_m$ are
the values of the section $s$ on each open edge at $v$.  We can
further organize the loops so that $d_j$ and $d_{j+1}$ are the values
at either end of a loop, following Figure \ref{tlcoh_1_fig}.  Hence, we can use \eqref{kirchoff} to obtain
a set of $n$ equations, one for each loop
\begin{equation*}
d_{2j+1}=L_j \left(\frac{2}{2n+m} \left(\sum_{i=1}^mc_i+\sum_{i=1}^{2n}d_i\right)-d_{2j}\right),
\end{equation*}
which can easily be solved for the edge endomorphisms $L_j$.

{\bf Inductive step:} We assume that there is a graph $Y$ which can be
obtained from $X$ by collapsing $f:X\to Y$ a single edge $e$ with distinct
endpoints $v_1,v_2$, and that all edge endomorphisms of $f_* \mathcal{F}$ are
known.  Assume (see Figure \ref{tlcollapse_fig}):

\begin{itemize}
\item That the degree of $v_1$ is $m$ and that the degree of
$v_2$ and $n$,
\item That the edge endomorphism of $e$ is $L$,
\item That $c_0, c_1, ..., c_{m-1}$ are the values of the section $s$, that are incoming for $v_1$,
\item That $c_0$ is the value of $s$ coming into $v_1$ along $e$, and 
\item That $d_0, d_1, ..., d_{n-1}$ are the values of
$s$ incoming to $v_2$,
\end{itemize}
then \eqref{kirchoff} gives the following equations (see Figure \ref{tlcollapse_fig} for notation):
\begin{equation}
\label{geo_eqn}
b=\frac{2}{m}(c_0+...+c_{m-1})-c_0, \; a=\frac{2}{n}(d_0+...+d_{n-1})-d_0.
\end{equation}
Using the edge endomorphism $L$, we note that $Lb=d_0$ and
$c_0=L^{-1}a$, which permits each equation in \eqref{geo_eqn} to be
solved for $d_0$ or $L$.
\begin{equation*}
d_0=\frac{2L}{m}(c_0+...+c_{m-1})-Lc_0=\frac{nL}{2-n}c_0-\frac{2}{2-n}(d_1+...+d_{n-1}).
\end{equation*}

Since the graph $X$ is finite, it only remains to see that the
induction can be started by finding a sequence of trees $\emptyset=T_1
\subset T_2 \subset ... \subset T$ in which $T_i - T_{i-1}$ is a
single edge, and $T$ is a spanning tree for $X$.  The base case is
obtained by examining $f:X \to X/T$, and each induction step is
obtained by considering the collapse via $f_i:X/T_{i-1} \to X/T_i$.
In each case, these maps satisfy the hypotheses of Lemma \ref{tlcollapse}.

\end{proof}

\section{Discussion}
\label{discussion_sec}

In order to implement our algorithms into a viable sensing system, one
needs to address discretization issues.  For instance, the topology
extraction algorithm assumes that there are enough
receivers to discriminate whether a coverage region (or intersections
thereof) is disconnected.  This may require a very high
density of receivers in order to make this discrimination with
confidence, since it essentially amounts to measuring the distance
between clusters in signal space.  Although such discretization effects are
out of the scope of this article, it is useful to note that the amount
of loss, the distribution of edge lengths, and the operating frequency
play an important role in determining the necessary receiver density. 

As a related point, detecting the appropriate threshold to use for
the visibility regions is less clear when using discrete
receivers.  A rigorous approach to this problem might use the tools of
persistent homology \cite{Edelsbrunner_2008} to attempt to capture the topology of the
visibility regions, and select appropriate thresholds.

While the geometry extraction algorithm assumes discrete receivers,
their synchronization constitutes a major limitation to performance.
Indeed, the synchronization requirement is modulated by the mutual
visibility of the receivers, which depends on signal loss.  On
the other hand, the synchronization requirement may also be relaxed by
the use of active sensing, in which the signal sources and sensors are
colocated.  This method would suggest that instead of studying homogeneous
solutions, we instead study fundamental solutions.

Finally, in practical urban imaging applications, the graph model is
inaccurate for open areas, though in simulation sensible results can
be obtained \cite{GHR_2010}.  Indeed, one needs to generalize the theory
discussed here to handle higher-dimensional cellular spaces.  Many
difficulties arise from this generalization, not the least of which is
that the cohomology of the excitation sheaf becomes
infinite-dimensional.  The refinement algorithm as stated here no
longer works correctly, either, as connectedness no longer implies
contractibility of intersections.  Thresholding to obtain
contractibility should still work correctly, though verifying that it
does will require considerable effort.

\section{Acknowledgements}
The author wishes to thank Dr. Yuliy Baryshnikov for suggesting the connection of this work to quantum graphs, and Professors Robert Ghrist and Yasu Hiraoka
for interesting and valuable discussions on this project.  In
addition, Hank Owen's ``outsider'' input helped to refine the
exposition.

\bibliography{qg_bib}

\begin{thebibliography}{10}

\bibitem{Avdonin_2008}
Sergei Avdonin and Pavel Kurasov.
\newblock Inverse problems for quantum trees.
\newblock {\em Inverse Problems and Imaging}, 2(1):1--21, 2008.

\bibitem{Baker_2007}
Mathhew Baker and Robert Rumely.
\newblock Harmonic analysis on metrized graphs.
\newblock {\em Canad. J. Math.}, 59(2):225--275, 2007.

\bibitem{Baker_2006}
Matthew Baker and Xander Faber.
\newblock Metrized graphs, {L}aplacian operators, and electrical networks.
\newblock In {\em Quantum graphs and their applications}, pages 15--34, 2006.

\bibitem{Bandelt_2008}
Hans-J\"{u}rgen Bandelt and Victor Chepoi.
\newblock Metric graph theory and geometry: a survey.
\newblock In {\em Surveys on discrete and computational geometry: twenty years
  later}, pages 49--87. Amer. Math. Soc., 2008.

\bibitem{Boman_2005}
Jan Boman and Pavel Kurasov.
\newblock Symmetries of quantum graphs and the inverse scattering problem.
\newblock {\em Advances in Applied Mathematics}, 35(1):58--70, 2005.

\bibitem{Bott_1995}
R.~Bott and L.~Tu.
\newblock {\em Differential forms in Algebraic topology}.
\newblock Springer, 1995.

\bibitem{Bredon}
Glen Bredon.
\newblock {\em Sheaf theory}.
\newblock Springer, 1997.

\bibitem{Carlson_1999}
Robert Carlson.
\newblock Inverse eigenvalue problems on directed graphs.
\newblock {\em Trans. Amer. Math. Soc.}, 351(10):4069--4088, 1999.

\bibitem{Caudrelier_2010}
V.~Caudrelier and E.~Ragoucy.
\newblock Direct computation of scattering matrices for general quantum graphs.
\newblock {\em Nuclear Physics B}, 828(3):515--535, 2010.

\bibitem{Edelsbrunner_2008}
Herbert Edelsbrunner and John Harer.
\newblock Persistent homology - a survey.
\newblock In {\em Surveys on discrete and computational geometry}, 2008.

\bibitem{Flesia_1987}
C.~Flesia, R.~Johnston, and H.~Kunz.
\newblock Strong localization of classical waves: a numerical study.
\newblock {\em Europhys. Lett.}, 3(4):497--502, 1987.

\bibitem{Friedlander_2005}
Leonid Friedlander.
\newblock Genericity of simple eigenvalues for a metric graph.
\newblock {\em Israel Journal of Mathematics}, 146:149--156, 2005.

\bibitem{Fulling_2007}
S.~A. Fulling, P.~Kuchment, and J.~H. Wilson.
\newblock Index theorems for quantum graphs.
\newblock {\em J. Phys. A: Math. Theor.}, 40:14165--14180, 2007.

\bibitem{GHR_2010}
R.~Ghrist, H.~Owen, and M.~Robinson.
\newblock {DTIME:} discrete topological imaging for multipath environments.
\newblock Technical report, University of Pennsylvania (ESE), 2010.
\newblock Available from {\tt http://repository.upenn.edu/ese\_reports/6/}.

\bibitem{Gnutzmann_2006}
Sven Gnutzmann and Uzy Smilansky.
\newblock Quantum graphs: Applications to quantum chaos and universal spectral
  statistics.
\newblock {\em Advances in Physics}, 55(5):527--625, 2006.

\bibitem{Golubitsky_1973}
Martin Golubitsky and Victor Guillemin.
\newblock {\em Stable mappings and their singularities}.
\newblock Springer, 1973.

\bibitem{Gutkin_2001}
Boris Gutkin and Uzy Smilansky.
\newblock Can one hear the shape of a graph?
\newblock {\em J. Phys. A: Math. Gen.}, 34:6061--6068, 2001.

\bibitem{Hatcher_2001}
Allen Hatcher.
\newblock {\em Algebraic Topology}.
\newblock Cambridge University Press, 2001.

\bibitem{Hubbard}
John~H. Hubbard.
\newblock {\em Teichm\"uller Theory, volume 1.}
\newblock Matrix Editions, 2006.

\bibitem{Jakobson_2003}
Dmitry Jakobson, Stephen Miller, Igor Rivin, and Ze\'ev Rudnick.
\newblock Eigenvalue spacings for regular graphs, {\tt arxiv:hep-th/0310002v1}.
\newblock 2003.

\bibitem{Kaczynski_2004}
Tomasz Kaczy\'nski, Konstantin~Michael Mischaikow, and Marian Mrozek.
\newblock {\em Computational homology}.
\newblock Springer, 2004.

\bibitem{Kostrykin_1999}
V.~Kostrykin and R.~Schrader.
\newblock Kirchoff's rule for quantum wires.
\newblock {\em J. Phys. A: Math. Gen.}, 32:595--630, 1999.

\bibitem{Kottos_2003}
T~Kottos and U~Smilansky.
\newblock Quantum graphs: a simple model for chaotic scattering.
\newblock {\em J. Phys. A: Math. Gen.}, 36:3501--3524, 2003.

\bibitem{Kottos_1997}
Tsampikos Kottos and Uzy Smilansky.
\newblock Quantum chaos on graphs.
\newblock {\em Physical Review Letters}, 79(24):4794--4797, 1997.

\bibitem{Kuchment_2002}
Peter Kuchment.
\newblock Graph models for waves in thin structures.
\newblock {\em Waves in random and complex media}, 12(4):R1--R24, 2002.

\bibitem{Kuchment_2008}
Peter Kuchment.
\newblock Quantum graphs: an introduction and a brief survey.
\newblock In {\em Analysis on Graphs and its applications}, pages 291--312.
  Isaac Newton Institute for Mathematical Sciences, 2007.

\bibitem{Kurasov_2001}
P~Kurasov and F~Stenberg.
\newblock On the inverse scattering problem on branching graphs.
\newblock {\em J. Phys. A: Math. Gen.}, 35:101--121, 2002.

\bibitem{Kurasov_2008}
Pavel Kurasov.
\newblock Graph laplacians and topology.
\newblock {\em Ark. Mat.}, 46:95--111, 2008.

\bibitem{Kurasov_2005}
Pavel Kurasov and Marlena Nowaczyk.
\newblock Inverse spectral problem for quantum graphs.
\newblock {\em J. Phys. A: Math. Gen.}, 38:4901--4915, 2005.

\bibitem{Lee_2003}
John~M. Lee.
\newblock {\em Introduction to Smooth Manifolds}.
\newblock Springer, 2003.

\bibitem{Mizuno_2008}
Hirobumi Mizuno and Iwao Sato.
\newblock The scattering matrix of a a graph.
\newblock {\em The Electronic Journal of Combinatorics}, 15(R96), 2008.

\bibitem{Molchanov_2006}
Stanislav Molchanov and Boris Vainberg.
\newblock Transition from a network of thin fibers to the quantum graph: an
  explicitly solvable model.
\newblock In {\em Quantum graphs and their applications}, pages 227--240, 2006.

\bibitem{Natsume_1980}
Hiroko Natsume.
\newblock The realization of abstract stratified sets.
\newblock {\em Kodai Math. J.}, 3(1):1--7, 1980.

\bibitem{Pankrashkin_2006}
Konstantin Pankrashkin.
\newblock Spectra of {S}chr\"odinger operators on equilateral quantum graphs.
\newblock {\em Letters in Mathematical Physics}, 77(2):139--154, 2006.

\bibitem{Parzanchevski_2009}
Ori Parzanchevski and Ram Band.
\newblock Linear representations and isospectrality with boundary conditions.
\newblock {\em J. Geom. Anal.}, 20:439--471, 2010.

\bibitem{Pauling_1936}
Linus Pauling.
\newblock The diamagnetic anisotropy of aromatic molecules.
\newblock {\em Journal of Chemical Physics}, 4:673--677, 1936.

\bibitem{Post_2008}
Olaf Post.
\newblock Spectral analysis of metric graphs and related spaces, {\tt
  arxiv:0712.1507}.
\newblock 2008.

\bibitem{Richardson_1972}
M.~J. Richardson and N.~L. Balazs.
\newblock On the network model of molecules and solids.
\newblock {\em Annals of Physics}, 73:308--325, 1972.

\bibitem{Roth_1983}
Jean-Pierre Roth.
\newblock Le spectre du laplacien sur un graphe.
\newblock In {\em Colloque de Th\'eorie du Potentiel}, pages 521--539, 1983.

\bibitem{Ruedenberg_1953}
Klaus Ruedenberg and Charles Scherr.
\newblock Free-electron network model for conjugated systems: I. theory.
\newblock {\em Journal of Chemical Physics}, 21(9):1565--1581, 1953.

\bibitem{Schurmann}
J\"{o}rg Sh\"{u}rmann.
\newblock {\em Topology and singular spaces and constructible sheaves}.
\newblock Birkh\"{a}user, 2003.

\bibitem{Smilansky_2006}
Uzy Smilansky and Michael Solomyak.
\newblock The quantum graph as a limit of a network of physical wires.
\newblock In {\em Quantum graphs and their applications}, pages 283--292, 2006.

\bibitem{Spencer_1969}
D.~C. Spencer.
\newblock Overdetermined systems of linear partial differential equations.
\newblock {\em Bull. Amer. Math. Soc.}, 75(2):179--239, March 1969.

\bibitem{Trotman_1979}
D.~J.~A. Trotman.
\newblock Stability of transversality to a stratification implies {Whitney}
  (a)-regularity.
\newblock {\em Inventiones. Math.}, 50:273--277, 1979.

\bibitem{Yurko_2005}
V.~Yurko.
\newblock Inverse spectral problems for {S}turm-{L}iouville operators on
  graphs.
\newblock {\em Inverse Problems}, 21:1075--1086, 2005.

\bibitem{Zhang_1993}
Shouwu Zhang.
\newblock Admissible pairing on a curve.
\newblock {\em Invent. Math.}, 112(1):171--193, 1993.

\end{thebibliography}
\bibliographystyle{plain}

\end{document}